\newcolumntype{C}[1]{>{\centering\arraybackslash}p{#1}}
\newtheorem*{rep@theorem}{\rep@title}
\newcommand{\newreptheorem}[2]{%
\newenvironment{rep#1}[1]{%
 \def\rep@title{#2 \ref{##1}}%
 \begin{rep@theorem}}%
 {\end{rep@theorem}}}
\theoremstyle{plain}
\newtheorem{theorem}{Theorem}[section]
\newtheorem{proposition}[theorem]{Proposition}
\newtheorem{corollary}[theorem]{Corollary}
\newtheorem{lemma}[theorem]{Lemma}
\newtheorem{example}[theorem]{Example}
\theoremstyle{definition}
\newtheorem{definition}[theorem]{Definition}
\newcommand{\bigrotatedbackslash}{\rotatebox[origin=c]{5}{\ensuremath{\mathsmaller{\backslash}}}}
\newcommand{\bigrotatedslash}{\rotatebox[origin=c]{-5}{\ensuremath{\mathsmaller{/}}}}
\newcommand{\bigdblbackslash}{\mathbin{{\bigrotatedbackslash}\mspace{-4mu}{\bigrotatedbackslash}}}
\newcommand{\bigvvee}{\raisebox{0.5pt}{\ensuremath{\,\mathop{\mathlarger{\mathlarger{\bigdblbackslash\hspace{-0.20ex}\bigrotatedslash}}}}}\xspace}
\newcommand{\bigrotatedbackslashd}{\rotatebox[origin=c]{5}{\ensuremath{\backslash}}}
\newcommand{\bigrotatedslashd}{\rotatebox[origin=c]{-38}{\ensuremath{\backslash}}}
\newcommand{\bigdblbackslashd}{\mathbin{{\bigrotatedbackslashd}\mspace{-5mu}{\bigrotatedbackslashd}}}
\newcommand{\bigvveed}{\raisebox{2.2pt}{\ensuremath{\mathop{\bigdblbackslashd\hspace{-0.8ex}\bigrotatedslashd}}}\xspace}
\DeclareMathOperator*{\intd}{\scalebox{0.51}{\bigvveed}}
\def\disji{\raisebox{-0.9pt}{\rotatebox[origin=c]{-90}{$\!{\mathlarger{\geqslant}}$}}}
\newcommand{\lori}{\,\disji\,}
\newcommand{\vvee}{\lori}
\newcommand{\dep}[2]{=\hspace{-0.1cm}(#1,#2)}
\newcommand{\PL}{\ensuremath{\mathbf{CPL}}\xspace}
\newcommand{\PLVVEE}{\ensuremath{\PL(\!\vvee\!)}\xspace}
\newcommand{\Prop}{\ensuremath{\mathsf{Prop}}\xspace}
\newcommand{\De}{\Delta}
\newcommand{\Ga}{\Gamma}
\newcommand{\Lam}{\Lambda}
\newcommand{\Sig}{\Sigma}
\renewcommand{\phi}{\varphi}
\newcommand{\seq}{\Rightarrow}
\newcommand{\ben}{\bigwedge}
\newcommand{\RRR}{\mathcal{R}}
\newcommand{\PPP}{\mathcal{P}}
\newcommand{\leqnomode}{\tagsleft@true}
\newcommand{\reqnomode}{\tagsleft@false}
\begin{document}

\renewcommand*{\thefootnote}{*}

\title{A Deep-Inference Sequent Calculus for Basic Propositional Team Logic (Without Delving Too Deep)\footnote{The first and third author's research leading to this paper was supported in part by grant 336283 of Academy of Finland. The first author also received funding from the European Research Council (ERC) under the European Union's Horizon 2020 research and innovation programme (grant agreement No 101020762). Part of the first author's research was conducted while he was affiliated with the Department of Mathematics and Statistics, University of Helsinki, Finland. The second author was supported by the Netherlands Organisation for Scientific Research under grant 639.073.807 as well as by the MOSAIC project (EU H2020-MSCA-RISE-2020 Project 101007627). The authors would like to thank Josef Doyle, Marianna Girlando, Søren Brinck Knudstorp, Juha Kontinen, and Elio La Rosa for helpful discussions on the contents of this paper.}}
\author{Aleksi Anttila\textsuperscript{a}, Rosalie Iemhoff\textsuperscript{b}, Fan Yang\textsuperscript{b}}

\maketitle
\renewcommand*{\thefootnote}{\arabic{footnote}}
\setcounter{footnote}{0}

\begin{center}
\noindent
\emph{\footnotesize \textsuperscript{a} Institute for Logic, Language and Computation, University of Amsterdam, Science Park 107, Amsterdam, 1098 XG, The
Netherlands}

\noindent
\emph{\footnotesize \textsuperscript{b} Department of Philosophy and Religious Studies, Utrecht University, Janskerkhof 13, Utrecht, 3512 BL, The Netherlands}

\end{center}

\begin{abstract}
\noindent
We introduce a sequent calculus for the propositional team logic with both the split disjunction and the inquisitive disjunction consisting of a Gentzen-style system ($\mathsf{G3}$-like) for classical propositional logic together with two deep-inference rules for the inquisitive disjunction. We show that the system satisfies various desirable properties: it admits height-preserving weakening, contraction and inversion; it supports a procedure for constructing cutfree proofs and countermodels similar to that for $\mathsf{G3cp}$; and cut elimination holds as a corollary of cut elimination for the $\mathsf{G3}$-style subsystem together with a normal form theorem for cutfree derivations. We also prove a sequent interpolation theorem for the system that yields a novel Lyndon's interpolation theorem for the logic as a corollary.
\end{abstract}

\section{Introduction}\label{di:section:introduction}

Logics such as \emph{dependence logic} and \emph{inquisitive logic} are usually interpreted using \emph{team semantics}: formulas are interpreted with respect to sets of evaluation points (valuations/assignments/possible worlds) called \emph{teams}, rather than single evaluation points as in the usual Tarskian semantics. Team semantics was originally introduced by Hodges \cite{hodges1997,hodges1997b} to provide a compositional semantics for Hintikka and Sandu's \emph{independence-friendly logic} \cite{hintikka1989, hintikka1996}; the idea was further developed by Väänänen in his work on dependence logic \cite{vaananen2007,vaananen2008,vaananen2010}. Independently, Ciardelli, Groenendijk, and Roelofsen developed inquisitive logic \cite{groenendijk,CiardelliRoelofsen2011,inqsembook,ciardellibook} which also essentially employs team semantics (see \cite{yang2014,ciardelli2016dependency}). The use of teams allows for simple and natural ways of formalizing notions such as question meaning (the question as to whether $p$ is the case can be formalized in propositional inquisitive logic as $ p\intd \lnot p$, where $\vvee$ is the \emph{inquisitive disjunction}) and dependence (``the value of $q$ functionally depends on the value of $p$'' can be formalized in propositional dependence logic \cite{yangvaananen2016} using a \emph{dependence atom} $\dep{p}{q}$). We will refer to logics primarily intended to be interpreted using team semantics as \emph{team logics}.

In this paper, we focus on propositional team logics. While there are a great number of natural deduction- and Hilbert-style axiomatizations of propositional team logics in the literature \cite{CiardelliRoelofsen2011,sanovirtema,puncochar,yangvaananen2016,ciardelli2016dependency,yang2017,luck2018,ciardelliIemhoffYang,yang2022}, the development of sequent calculus systems and of proof theory in general for these logics has been slower. The sequent calculi that have been constructed have all been for variants of inquisitive logic; these include multiple labelled systems \cite{sano,ChenMa,Muller,muller2024,litak2025boundedinquisitivelogicssequent} as well as the multi-type display calculus in \cite{frittella2016}. There is additionally one natural deduction system with normalization \cite{Muller}.

One of the main difficulties in providing standard sequent calculi for team logics is that usually these logics are not \emph{closed uniform substitution}. Due to the failure of uniform substitution, axiomatizations for team logics typically feature rules that may only be applied to some subclass of formulas, and these axiomatizations do not admit the usual uniform substitution rule. Many proof-theoretic techniques depend on the universal applicability of the rules, so it is not immediately obvious how to apply these techniques to most team logics (we discuss the difficulties with cut elimination in a setting with restricted rules in more detail in Section \ref{di:section:cut_elimination})---often some specialized machinery has to be introduced to handle this issue. For instance, the construction of the multi-type display calculus in \cite{frittella2016} involves the introduction of a new language featuring two types of formulas for the team logic axiomatized, with closure under substitution holding within each of these types. It is also not even immediate how a sequent should be interpreted in the setting of team semantics---there are, for instance, multiple disjunctions available to interpret the commas in the succedent $\De$ of a sequent $\Ga\seq \De$.

In this paper, we introduce a sequent calculus for the propositional team logic $\PLVVEE$ (studied, for instance, in \cite{yangvaananen2016,sanovirtema,yang2024negation}) that has cut elimination---that is, cut is admissible in the cutfree fragment. This logic, which we will refer to as \emph{basic propositional team logic} in this paper, features both the \emph{split disjunction} $\vee$ (also known as the \emph{tensor} or \emph{local disjunction})---this is the canonical disjunction employed in dependence logic and other logics in the lineage of dependence logic---and the \emph{inquisitive disjunction} $\vvee$ already mentioned above, which is used in inquisitive logic to model the meanings of questions. Basic propositional team logic is a conservative extension of classical propositional logic $\PL$, with the split disjunction $\vee$ extending the classical disjunction; it is also an extension of propositional dependence logic (the extension of $\PL$ with dependence atoms) in the sense that dependence atoms are definable in $\PLVVEE$. Notably, the calculus we introduce is, to our knowledge, the first sequent calculus for a team logic featuring the split disjunction that is based on classical logic. In related work, \cite{Muller} provides a labelled calculus for an intuitionistic variant of propositional inquisitive logic with the split disjunction.

The calculus we construct features a standard Gentzen-style system for $\PL$ with some syntactic restrictions to the effect that certain active formulas and context sets must be classical ($\vvee$-free). Adapting an idea from the natural deduction systems in \cite{yangvaananen2016,yang2017}, this Gentzen-style system is supplemented with two \emph{deep-inference} (see, e.g., \cite{schutte,bull, kashima,pym,guglielmi,brunnler2009,poggiolesi2009,poggiolesi}) \footnote{Some authors (e.g. \cite{brunnler2010nestedsequents}) use `deep inference' to refer in particular to the \emph{calculus of structures} \cite{guglielmi}. We use the term to refer, more broadly, to any approach in which rules are applicable deep within formulas.} rules for the inquisitive disjunction $\vvee$---that is, rules which allow one to introduce the inquisitive disjunction (almost) anywhere within a formula, rather than only as its main connective. The deep-inference rules allow for cutfree completeness of the system and for many standard proof-theoretic techniques to be applied despite the limited applicability of the restricted rules. Essentially, cutfree proofs can be constructed by first constructing cutfree classical proofs, and then introducing inquisitive disjunctions as required; and procedures involving the commuting of sequents which depend on the universal applicability of the rules (such as cut elimination) can be conducted in such a way that they only involve commuting sequents in the classical part of the calculus, in which the rules \emph{are} universally applicable. The cutfree fragment of the calculus has a weak subformula property; we define the relevant notion of weak subformula by generalizing the notion of \emph{resolutions} from inquisitive logic \cite{ciardellibook}.

Our aim was to develop a tractable, transparent sequent calculus for a team logic that has informative proof-theoretic properties. In our case, this means a simple system with cut elimination that departs as little as possible from a Gentzen-style calculus. Our \emph{pro re nata} deep-inference approach accomplishes this in the following ways.

First, we avoid both importing the semantics into the system in the form of labels (like the labelled systems in \cite{ChenMa,Muller}) and extending the syntax of the logic (like the multi-type display calculus in \cite{frittella2016}).

Second, our system consists of only a single pair of rules for each connective: an introduction (right) rule and an elimination (left) rule. This is in contrast with the frequently complex natural deduction systems for team logics, including the natural deduction system for $\PLVVEE$ \cite{yangvaananen2016}.

Third, the system is a natural extension of a well-known Gentzen-style system for $\PL$ (a variant of $\mathsf{G3cp}$ without the implication rules---see, e.g., \cite[pp. 77--78]{troelstra}) extended with rules for the inquisitive disjunction $\vvee$---this means that the fact, mentioned above, that $\PLVVEE$ is an extension of $\PL$ with $\vvee$ is directly reflected in a straightforward way in the calculus, and the calculus allows us to see immediately and transparently exactly what is required to be added to an axiomatization of $\PL$ to axiomatize $\PLVVEE$.

It should be pointed out that this structure is possible due to a key design decision concerning the interpretation of sequents and their structural components. In order to discuss this decision, let us informally introduce the semantics of the split disjunction $\vee$ and the inquisitive disjunction $\vvee$. Below, $t$ is a propositional team---a set of valuations.
\begin{align*}
&t\models \phi \vee \psi &&\text{iff} &&\text{there exist $s,u\subseteq t$ such that $t=s\cup u$, $s\models\phi$, and $u\models\psi$}\\
    &t\models \phi \vvee \psi &&\text{iff} && t\models \phi \text{ or }t\models \psi
\end{align*}
That is, a split disjunction $\phi\vee\psi$ is true in a team just in case the team can be split into two subteams, with each disjunct being true in at least one of the subteams; and an inquisitive disjunction is true in a team just in case one of the disjuncts is. Now, instead of the standard interpretation whereby we take a sequent $\Ga\seq \De$ to be valid just in case whenever each formula in $\Ga$ is true in a team $t$, at least one formula of $\De$ is true in $t$---this is is equivalent to interpreting $\Ga\seq \De$ as $\bigwedge \Ga \models \bigvvee \De$, with the inquisitive disjunction $\vvee$ interpreting the commas in the succedent---we interpret a sequent $\Ga\seq \De$ as $\bigwedge \Ga \models \bigvee \De$, with the split disjunction $\vee$ interpreting the commas in $\De$ (i.e., we take $\Ga\seq \De$ to be valid just in case whenever each formula in $\Ga$ is true in a team $t$, there is, for each $\phi\in \Delta$, a $t_\phi$ such that $t_\phi\models \phi$; and $t=\bigcup_{\phi\in \Delta}t_\phi$). While the inquisitive disjunction $\vvee$ has the standard disjunction semantics (with respect to teams), it is the split disjunction $\vee$ that extends the classical disjunction of $\PL$; therefore, in order to employ, as we have done, a calculus for $\PL$ in the team setting without making extensive changes, succedent commas must be interpreted as $\vee$. The labelled systems in \cite{ChenMa,Muller}, in contrast, interpret the succedent comma as $\vvee$; the multi-type display calculus in \cite{frittella2016} essentially interprets the succedent comma as $\vee$ for one type of the system, and as $\vvee$ for the other.

Fourth and finally, due to the third point above, we are able to show many proof-theoretic results for our system as extensions or corollaries of the analogous results for the classical Gentzen-style system. The following results in this paper are examples of this: the admissibility (with some restrictions) of height-preserving weakening, contraction and inversion (Section \ref{di:section:inversion_etc}); the $\mathsf{G3cp}$-style proof of cutfree completeness and countermodel construction in Section \ref{di:section:countermodels}; the cut elimination procedure (Section \ref{di:section:cut_elimination}); and the sequent interpolation theorem (Section \ref{di:section:interpolation}).

The system also allows us to prove---via the interpolation theorem---some novel results concerning the logic $\PLVVEE$. The literature on interpolation in propositional team logics \cite{dagostino,ciardelliIemhoffYang,yang2022} has mainly focused on uniform interpolation---there are (as far as we know) no results on constructing Craig's interpolants that are not also uniform interpolants, and no results on Lyndon's interpolation. Our sequent interpolation theorem yields as corollaries both a Lyndon's interpolation theorem as well as a Craig's interpolation theorem that does not rely on the construction of (the comparatively more complex) uniform interpolants.

One further interesting feature of the system is that, due to the fact that succedent commas are interpreted with the split disjunction, certain structural rules correspond directly to two important \emph{team-semantic closure properties}: the \emph{empty team property} and \emph{union closure}. The logic $\PLVVEE$ has the empty team property (meaning that the empty team satisfies all $\PLVVEE$-formulas), and its classical fragment is union closed (meaning that the truth of a classical formula in a collection of teams implies its truth in the union of the collection); $\PLVVEE$ as a whole is not union closed. The empty team property corresponds to the soundness of weakening on the right, and union closure corresponds to the soundness of contraction on the right. Therefore, weakening on the right is sound for all $\PLVVEE$-formulas (and indeed admissible in the cutfree fragment of our system for all $\PLVVEE$-formulas), whereas contraction on the right is only guaranteed to be sound (and admissible in the cutfree fragment) for classical formulas.

The paper is structured as follows. In Section \ref{di:section:syntax_etc}, we define the syntax and semantics of the logic $\PLVVEE$ and recall some basic facts about team semantics and this logic. In Section \ref{di:section:resolutions}, we adapt the notion of \emph{resolutions} from inquisitive logic to our setting---resolutions turn out to be a convenient tool for describing how our system functions. In Section \ref{di:section:double_disjunction_system}, we introduce our deep-inference sequent calculus $\mathsf{GT}$ for $\PLVVEE$. We also provide motivation for the deep-inference approach by discussing why a straightforward sequent calculus-translation of the natural deduction system for $\PLVVEE$ would not be cutfree complete, and how the deep-inference rules provide for a natural cutfree extension of this translation. In Section \ref{di:section:inversion_etc}, we prove some basic properties of the system: the height-preserving admissibility (with some restrictions) of  weakening, contraction, and inversion in the cutfree fragment. In Section \ref{di:section:countermodels}, we make use of the (semantic) invertibility of the rules in our system to provide a procedure for constructing cutfree proofs and countermodels that is similar to the analogous procedure for $\mathsf{G3cp}$. This yields a semantic proof of cut elimination. In Section \ref{di:section:partial_resolutions}, we generalize the notion of resolutions to what we call \emph{partial resolutions}, and use partial resolutions to define a weak subformula property for the cutfree fragment of our system as well as to provide a second semantic proof of cut elimination. In Section \ref{di:section:cut_elimination}, we give a syntactic proof of cut elimination---a cut elimination procedure. This makes use of a \emph{normal form theorem} for derivations (similar to what is known in as a \emph{decomposition theorem} in the literature on the calculus of structures): each derivation in the cutfree fragment of $\mathsf{GT}$ can be transformed into a derivation in which one first applies only rules within the classical subsystem of $\mathsf{GT}$, then applies only the right deep-inference rule for $\vvee$, and finally applies only the left deep-inference rule for $\vvee$. Using this theorem, each cut in $\mathsf{GT}$ can be transformed into cuts within the classical subsystem of $\mathsf{GT}$, which can then be eliminated using the cut elimination procedure for this subsystem. In Section \ref{di:section:interpolation}, we utilize the cutfree completeness of the system to prove a sequent interpolation theorem (employing an adaptation of Maehara's method); this also yields Craig's and Lyndon's interpolation as corollaries. In Section \ref{di:section:structural_rule_variant}, we define a variant of $\mathsf{GT}$ featuring independent-context rules instead of shared-context rules with syntactic restrictions. Section \ref{di:section:conclusion} concludes with some discussion concerning the applicability of the deep-inference approach to other team logics.

A preliminary version of this paper appeared in the PhD thesis of the first author \cite{anttila2025}.

\section{Preliminaries} \label{di:section:preliminaries}

\subsection{Syntax, Semantics, and Closure Properties} \label{di:section:syntax_etc}

We fix a (countably infinite) set $\Prop$ of propositional variables.
\begin{definition}[Syntax]
The set of formulas $\alpha$ of \emph{classical propositional logic} \PL is generated by:
\[\alpha::=p\mid\bot \mid\neg\alpha\mid \alpha\wedge\alpha\mid\alpha\vee\alpha\]
where $p\in \Prop$.

The set of formulas $\phi$ of \emph{basic propositional team logic} \PLVVEE is generated by:
\[\phi::= \alpha\mid  \phi\wedge\phi\mid\phi\vee\phi\mid\phi\vvee\phi\]
where $\alpha \in \PL$ (that is, $\alpha$ is a formula of $\PL$).
\end{definition}

We let $\bigvee \emptyset:=\bot$, and $\bigwedge \emptyset:=\lnot \bot$. The connective $\vee$ is the \emph{split disjunction} (also known as the \emph{tensor disjunction}) and $\vvee$ is the \emph{inquisitive disjunction}. We write $\mathsf{P}(\phi)$ for the set of propositional variables appearing in $\phi$, and $\phi(p_1,\ldots, p_n)$ if $\{p_1,\ldots, p_n\}\subseteq \mathsf{P}(\phi)$. For a set/multiset of formulas $\Gamma$, we let $\mathsf{P}(\Gamma):=\bigcup_{\phi\in \Gamma}\mathsf{P}(\phi)$. We also call formulas of $\PL$ \emph{classical formulas}. We reserve the first lowercase Greek letters $\alpha,\beta$ for classical formulas, and the uppercase Greek letters $\Xi,\Lambda,\Theta,\Omega$ for multisets of classical formulas. The set of \emph{subformulas} of a formula is defined in the standard way.

Let $\mathsf{X}\subseteq \Prop$. A \emph{team with domain $\mathsf{X}$} is a set $t\subseteq 2^\mathsf{X}$ of valuations over $\mathsf{X}$. 

\begin{definition}[Semantics]
For any formula $\phi$ and any team $t$ with domain $\supseteq \mathsf{P}(\phi)$, the satisfaction relation $t\models\phi$ is defined inductively by:
\begin{align*}
    &t\models p &&\text{iff} &&\text{for all $v\in t$, $ v(p)=1$;}\\
    &t\models \bot &&\text{iff} &&t=\emptyset;\\
    &t\models \neg\alpha &&\text{iff} &&\text{for all $v\in t$, $\{v\}\nmodels\alpha$;}\\
    &t\models\phi\wedge\psi &&\text{iff} &&\text{$t\models\phi$ and $t\models\psi$;}\\
    &t\models\phi\vee\psi &&\text{iff} &&\text{there exist $s,u\subseteq t$ such that $t=s\cup u$, $s\models\phi$, and $u\models\psi$;}\\
    &t\models\phi\vvee\psi &&\text{iff} &&\text{$t\models\phi$ or $t\models\psi$.}
\end{align*}
For any multiset $\Gamma\cup\{\phi\}$ of formulas, we write $t\models \Gamma$ if $t\models\psi$ for all $\psi\in\Gamma$, and we write $\Gamma\models\phi$ if $t\models \Gamma$ implies $t\models \phi$. We write $\phi\models \psi$ for $\{\phi\}\models \psi$. We write $\models\phi$ if $t\models\phi$ for all teams $t$ with domain $\supseteq \mathsf{P}(\phi)$.
\end{definition}

We define standard \emph{closure properties}. We say that $\phi$ has the
\begin{align*}
&\text{\textbf{empty team property}} &&\text{iff}  &&\emptyset \models \phi;\\
&\text{\textbf{downward closure property}
}  &&\text{iff} &&[t\models \phi\text{ and } s\subseteq t]\implies s\models \phi;\\
&\text{\textbf{union closure property}} &&\text{iff} &&[t\models \phi \text{ for all }t\in T\neq \emptyset]\implies \bigcup T\models \phi;\\
&\text{\textbf{flatness property}} &&\text{iff} &&  t\models \phi \iff [\{v\}\models\phi\text{ for all } v\in t]. 
\end{align*}

It is easy to see that $\phi$ is flat iff it has the empty team, downwards-closure, and union-closure properties.

\begin{proposition}
    All formulas of $\PLVVEE$ satisfy the empty team and downward-closure properties. All formulas of $\PL$ additionally satisfy the union-closure property and hence also the flatness property.
\end{proposition}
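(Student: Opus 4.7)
The plan is to prove each of the three named closure properties by a routine structural induction; flatness then follows from the characterization stated immediately before the proposition.

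For the empty team property on $\PLVVEE$, I would induct on $\phi$. The atomic cases are immediate from the definitions: $\emptyset \models p$ and $\emptyset \models \lnot \alpha$ hold vacuously because they quantify over $v \in \emptyset$, and $\emptyset \models \bot$ holds because $\emptyset = \emptyset$. Conjunction is handled by applying the IH to both conjuncts. For $\phi \vee \psi$, take the trivial split $\emptyset = \emptyset \cup \emptyset$ and use the IH. For $\phi \vvee \psi$, the IH gives $\emptyset \models \phi$, which suffices. For downward closure on $\PLVVEE$, again induct on $\phi$: the atomic, negation, and conjunction cases are immediate. The main case is $\vee$: given $t \models \phi \vee \psi$ via a splitting $t = t_1 \cup t_2$ and $s \subseteq t$, write $s = (s \cap t_1) \cup (s \cap t_2)$ and apply the IH to the inclusions $s \cap t_i \subseteq t_i$. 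The $\vvee$ case is direct from the IH.

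For union closure on $\PL$, I induct on classical formulas $\alpha$, assuming $T \ne \emptyset$ and $t \models \alpha$ for all $t \in T$. Atoms are immediate because the truth condition on $p$ is a universal statement over valuations; similarly for $\lnot \alpha$; $\bot$ forces each $t$ to be empty so $\bigcup T = \emptyset$; conjunction is immediate from the IH. For $\alpha \vee \beta$, for each $t \in T$ fix a witnessing split $t = s_t \cup u_t$ with $s_t \models \alpha$ and $u_t \models \beta$. Then $\bigcup T = \bigl(\bigcup_{t \in T} s_t\bigr) \cup \bigl(\bigcup_{t \in T} u_t\bigr)$, and the IH applied to the nonempty families $\{s_t : t \in T\}$ and $\{u_t : t \in T\}$ yields $\bigcup_{t \in T} s_t \models \alpha$ and $\bigcup_{t \in T} u_t \models \beta$, witnessing $\bigcup T \models \alpha \vee \beta$. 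Flatness of classical formulas then follows from the characterization of flatness noted just above the proposition.

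I do not expect any genuine obstacle; every case is a short unpacking of the semantic clauses. The only subtle point worth flagging is that union closure is asserted only for $\PL$ and not for all of $\PLVVEE$, so the induction for union closure must not descend through $\vvee$ at all. Restricting the syntax to classical formulas makes this automatic, since $\vvee$ never appears in the inductive construction of $\alpha \in \PL$.
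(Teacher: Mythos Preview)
Your argument is correct and is the standard one. The paper does not actually give a proof of this proposition; it is stated as a well-known fact about team semantics and left unproved. Your structural induction covers all the cases cleanly, including the only mildly nontrivial step (the $\vee$ case for union closure, handled by collecting the witnessing splits across the family), and you correctly observe that the induction for union closure must stay within $\PL$ so that $\vvee$ never appears.
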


We also have that the team semantics of formulas of $\PL$ on singletons coincide with their standard single-valuation semantics: $\{v\}\models \alpha \iff v\models \alpha$. We therefore have that for any $\alpha\in \PL$:
\[t\models \alpha \iff \{v\}\models \alpha \text{ for all }v\in t \iff v\models \alpha \text{ for all }v\in t  .\]

\begin{corollary}\label{di:coro:CPL_entailment_eq_team_entailment}
For any set/multiset $\Lambda\cup\{\alpha\}$ of classical formulas, $\Lambda\models^c\alpha \iff \Lambda\models\alpha$, where $\models^c$ stands for the usual (single-valuation) entailment relation.
\end{corollary}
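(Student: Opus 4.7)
The plan is to prove both directions of the equivalence by reducing team satisfaction of classical formulas to single-valuation satisfaction, using the flatness property of $\PL$-formulas established in the preceding proposition together with the displayed singleton correspondence $\{v\}\models\alpha \iff v\models\alpha$.

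For the forward direction, I would assume $\Lambda\models^c\alpha$ and take an arbitrary team $t$ with $t\models\Lambda$. By flatness, $t\models\beta$ gives $\{v\}\models\beta$ for every $v\in t$ and every $\beta\in\Lambda$, and hence $v\models\beta$ by the singleton correspondence. So each $v\in t$ is a classical model of $\Lambda$, and the assumption yields $v\models\alpha$, equivalently $\{v\}\models\alpha$, for all $v\in t$. Applying flatness once more in the other direction gives $t\models\alpha$, as required.

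For the backward direction, I would assume $\Lambda\models\alpha$ and take any valuation $v$ with $v\models\Lambda$. Then $\{v\}\models\beta$ for each $\beta\in\Lambda$ by the singleton correspondence, so the singleton team $\{v\}$ is a team-semantic model of $\Lambda$. The assumption gives $\{v\}\models\alpha$, which by the singleton correspondence reduces to $v\models\alpha$.

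There is no real obstacle here: both directions are immediate once flatness and the singleton correspondence are in hand, and these have already been stated just above the corollary. The only care needed is to treat $\Lambda$ as a possibly infinite multiset (which affects nothing, since flatness is applied pointwise to each $\beta\in\Lambda$) and to note that the result holds for $\Lambda=\emptyset$ as well, where both sides reduce to $\models\alpha$ in the respective semantics.
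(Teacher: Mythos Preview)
Your proof is correct and follows exactly the approach the paper intends: the corollary is stated without an explicit proof, as an immediate consequence of the flatness of classical formulas and the singleton correspondence displayed just above it, and your argument spells out precisely these two steps in each direction.
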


To see why formulas including $\vvee$ might fail to be union closed, consider the formula $p\vvee \lnot p$. If $v_p\models p$ and $v_{\bar{p}}\models \lnot p$, then $\{v_p\}\models p\vvee \lnot p$ and $\{v_{\bar{p}}\}\models p\vvee \lnot p$, but $\{v_p,v_{\bar{p}}\}\nmodels p\vvee \lnot p$.

$\PLVVEE$ does not admit uniform substitution: writing $\phi(\chi/p)$ for the result of replacing all occurrences of $p$ in $\phi(p)$ with $\chi$, it is not the case that $\phi(p)\models \psi(p)$ implies $\phi(\chi/p)\models \psi(\chi/p)$. For instance, we have $p\vee p\models p$, but $(p\vvee \lnot p)\lor (p\vvee \lnot p)\nmodels p\vvee \lnot p$.

\subsection{Resolutions and Normal Form} \label{di:section:resolutions}

In this section, we adapt the notion of \emph{resolutions} from inquisitive logic \cite{ciardellibook}, and list pertinent results. The proofs of these results are analogous to their proofs in propositional inquisitive logic; see \cite{ciardellibook} for details.

As discussed in Section \ref{di:section:introduction}, the inquisitive disjunction $\vvee$ is used in inquisitive logic as a question-forming connective---for instance, the question as to whether or not $p$ is the case is represented by $p\vvee\lnot p$. It is helpful to think of the resolutions of a formula $\phi$ as representing the answers to $\phi$ (the different possible ways of resolving $\phi$) whenever $\phi$ represents a question. If $\phi$ does not contain instances of $\vvee$, it does not represent a question, and its only resolution is $\phi$ itself. In Section \ref{di:section:partial_resolutions}, we generalize the notion of resolutions to \emph{partial resolutions}, and use partial resolutions to prove a weak subformula property for and the cutfree completeness of our sequent calculus. 

\begin{definition}[Resolutions]
The set $\RRR(\phi)$ of \emph{resolutions} of a formula $\phi$ is defined recursively as follows:
\begin{itemize}
    \item[-] $\mathcal{R}(p):=\{p\}$;
    \item[-] $\mathcal{R}(\bot):=\{\bot\}$;
    \item[-] $\mathcal{R}(\lnot \alpha):=\{\lnot \beta\mid \beta\in \RRR(\alpha)\}$;
    \item[-] $\mathcal{R}(\phi \land \psi):=\{\alpha\land \beta\mid \alpha\in \RRR(\phi)$ and $\beta\in \RRR(\psi)\}$;
    \item[-] $\mathcal{R}(\phi \vee \psi):=\{\alpha\vee \beta\mid \alpha \in \RRR(\phi)$ and $\beta\in \RRR(\psi)\}$;
    \item[-] $\mathcal{R}(\phi \vvee \psi):=\RRR(\phi)\cup\RRR(\psi)$.
\end{itemize}
\end{definition}

Clearly for $\phi\in \PLVVEE$, $\RRR(\phi)\subseteq \PL$; and for $\alpha\in \PL$, $\RRR(\alpha)=\{\alpha\}$.

\begin{definition}[Resolutions for multisets]
A \emph{resolution function for a multiset of formulas $\Gamma\subseteq \PLVVEE$} is a map $f:\Gamma\to \PL$ such that for each $\phi \in \Gamma$, $f(\phi)\in \RRR(\phi)$. The set $\RRR(\Gamma)$ (of multisets)  of \emph{resolutions} of $\Gamma$ is defined as the set
$$\RRR(\Gamma):=\{f[\Gamma]\mid f\text{ if a resolution function for }\Gamma\}$$
of images $f[\Gamma]$ of $\Gamma$ under all resolution functions $f$.
\end{definition}
As above, for $\Gamma\subseteq \PLVVEE$, for each $\Xi\in\RRR(\Gamma)$, $\Xi\subseteq \PL$; and for $\Lambda\subseteq \PL$, $\RRR(\Lambda)=\{\Lambda\}$.

Each formula is equivalent to the inquisitive disjunction of its resolutions (and hence to an inquisitive disjunction of classical formulas), and similarly for multisets of formulas.
\begin{proposition}[Normal form] \label{di:prop:normal_form}
    $\phi \equiv \bigvvee \RRR(\phi)$ and $$t\models \Gamma \iff t\models \Xi \text{ for some }\Xi\in \RRR(\Ga).$$
\end{proposition}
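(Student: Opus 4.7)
The plan is to prove the two parts in sequence, with the first by induction on formula complexity and the second reduced to the first.

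For the first equivalence $\phi \equiv \bigvvee \RRR(\phi)$, I would induct on the structure of $\phi$. The base cases ($p$, $\bot$, and $\lnot\alpha$ where $\alpha$ is classical) are immediate, since in each case $\RRR(\phi) = \{\phi\}$ and $\bigvvee \{\phi\} = \phi$. The case $\phi = \psi_1 \vvee \psi_2$ is also essentially trivial: by the semantic clause for $\vvee$ and the induction hypothesis, $t \models \psi_1 \vvee \psi_2$ iff $t \models \bigvvee \RRR(\psi_1)$ or $t \models \bigvvee \RRR(\psi_2)$ iff $t \models \bigvvee (\RRR(\psi_1) \cup \RRR(\psi_2))$, which is exactly $\bigvvee \RRR(\phi)$.

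The substantive cases are $\phi = \psi_1 \wedge \psi_2$ and $\phi = \psi_1 \vee \psi_2$. For these I would establish two distributivity lemmas: $(\chi_1 \vvee \chi_2) \wedge \xi \equiv (\chi_1 \wedge \xi) \vvee (\chi_2 \wedge \xi)$ and $(\chi_1 \vvee \chi_2) \vee \xi \equiv (\chi_1 \vee \xi) \vvee (\chi_2 \vee \xi)$, proved directly from the semantic clauses. The first is straightforward: $t$ models the left side iff $t$ models $\xi$ and ($t \models \chi_1$ or $t \models \chi_2$), which clearly distributes. For the second, given a split $t = s \cup u$ witnessing the left side with $s \models \chi_1 \vvee \chi_2$, we commit to whichever disjunct actually holds and use the same split for the right side; the converse is immediate since each disjunct on the right implies the left side. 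Iterating these two distributivities gives $\bigvvee \RRR(\psi_1) \wedge \bigvvee \RRR(\psi_2) \equiv \bigvvee \{\alpha \wedge \beta \mid \alpha \in \RRR(\psi_1),\ \beta \in \RRR(\psi_2)\}$, which matches the definition of $\RRR(\psi_1 \wedge \psi_2)$, and similarly for $\vee$.

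For the multiset claim, I would argue as follows. By definition, $t \models \Gamma$ iff $t \models \phi$ for every $\phi \in \Gamma$. Applying the first part, this is equivalent to: for every $\phi \in \Gamma$ there exists $\alpha_\phi \in \RRR(\phi)$ with $t \models \alpha_\phi$. Choosing such an $\alpha_\phi$ for each $\phi$ defines a resolution function $f$ with $t \models f(\phi)$ for all $\phi$, i.e.\ $t \models f[\Gamma]$; conversely, any $\Xi = f[\Gamma] \in \RRR(\Gamma)$ with $t \models \Xi$ supplies the required witnesses. Hence $t \models \Gamma$ iff $t \models \Xi$ for some $\Xi \in \RRR(\Gamma)$.

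The main obstacle I anticipate is the $\vee$ case of the distributivity lemma, since the split-disjunction semantics involves existential quantification over subteams and one has to be careful about which disjunct of $\chi_1 \vvee \chi_2$ is chosen on the subteam $s$; but this reduces to the observation that a single witnessing split can be reused once we commit to the holding disjunct, so no genuine complication arises.
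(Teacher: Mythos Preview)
Your proposal is correct and is essentially the same approach as the paper's: the paper's proof is a one-liner stating that the result follows from the distributivity of $\land$ and $\vee$ over $\vvee$, and your induction on the structure of $\phi$ is precisely the standard way to unpack that claim. Your treatment of the multiset part is also the natural elaboration of what the paper leaves implicit.
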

\begin{proof}
Follows from the fact that $\land$ and $\vee$ distribute over $\vvee$: \begin{equation*}
    \phi\vee(\psi\vvee\chi)\equiv(\phi\vee\psi)\vvee(\phi\vee\chi)\text{ and }\phi\land(\psi\vvee\chi)\equiv(\phi\land\psi)\vvee(\phi\land\chi).\qedhere
\end{equation*}
\end{proof}

The \emph{split property} is a generalization of the disjunction property (with respect to $\vvee$): 

\begin{proposition}[Split property] \label{di:prop:split}For $\Lambda\subseteq \PL$ and $\phi,\psi\in \PLVVEE$: $$\Lambda\models \phi \vvee\psi \iff [\Lambda \models \phi \text{ or }\Lambda\models \psi].$$
\end{proposition}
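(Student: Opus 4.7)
The right-to-left direction is immediate from the semantics of $\vvee$: if $t \models \Lambda$ implies $t \models \phi$ (resp.\ $t \models \psi$), then $t \models \Lambda$ implies $t \models \phi \vvee \psi$.

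For the left-to-right direction, I would argue by contraposition. Suppose $\Lambda \nmodels \phi$ and $\Lambda \nmodels \psi$, and I aim to show $\Lambda \nmodels \phi \vvee \psi$. By assumption there exist teams $t_1$ and $t_2$ with $t_1 \models \Lambda$, $t_1 \nmodels \phi$, and $t_2 \models \Lambda$, $t_2 \nmodels \psi$. The crucial move is to combine these two teams: since every formula in $\Lambda$ is classical and hence union closed (by the proposition on closure properties established earlier), we have $t_1 \cup t_2 \models \Lambda$. If we can show $t_1 \cup t_2 \nmodels \phi \vvee \psi$, we are done.

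To see the latter, suppose for contradiction that $t_1 \cup t_2 \models \phi \vvee \psi$. Then by the semantics of $\vvee$ either $t_1 \cup t_2 \models \phi$ or $t_1 \cup t_2 \models \psi$. In the first case, since every $\PLVVEE$-formula is downward closed and $t_1 \subseteq t_1 \cup t_2$, we get $t_1 \models \phi$, contradicting the choice of $t_1$; in the second case, analogously, $t_2 \models \psi$, contradicting the choice of $t_2$.

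\textbf{Main obstacle.} There is no real obstacle here: the proof is essentially an interplay between union closure for the classical assumptions in $\Lambda$ and downward closure for the (possibly non-classical) $\phi, \psi$. The only point worth flagging is that the argument relies essentially on $\Lambda$ consisting of classical formulas—if $\Lambda$ were allowed to contain arbitrary $\PLVVEE$-formulas, the step $t_1, t_2 \models \Lambda \implies t_1 \cup t_2 \models \Lambda$ would fail (as witnessed by the failure of union closure for $p \vvee \lnot p$ noted just after Corollary~\ref{di:coro:CPL_entailment_eq_team_entailment}), and indeed the split property itself would fail in that generality.
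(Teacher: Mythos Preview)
Your proof is correct and follows exactly the approach the paper intends: the paper's own proof consists of the single line ``By the union closure of $\PL$ and the downward closure of $\PLVVEE$,'' and you have spelled out precisely those two ingredients in the natural way (union closure to merge the two countermodels into a single model of $\Lambda$, downward closure to derive a contradiction from either disjunct holding in the union).
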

\begin{proof}
    By the union closure of $\PL$ and the downward closure of $\PLVVEE$.
\end{proof}

Finally, a multiset of formulas $\Gamma$ entails $\psi$ just in case each resolution of $\Gamma$ entails some resolution of $\psi$:
\begin{theorem}[Resolution theorem]\label{di:theorem:resolution}
    $$\Gamma \models \psi \iff \text{ for each } \Lambda \in \RRR(\Gamma)\text{ there is some } \alpha \in \RRR(\psi)\text{ such that } \Lambda \models \alpha.$$
\end{theorem}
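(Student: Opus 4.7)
The plan is to prove both directions by combining the normal form theorem (Proposition \ref{di:prop:normal_form}) with the split property (Proposition \ref{di:prop:split}). The key observation is that each $\Lambda \in \RRR(\Gamma)$ consists entirely of classical formulas, so that the split property is applicable with $\Lambda$ on the left.

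For the forward direction, I would assume $\Gamma \models \psi$ and pick an arbitrary $\Lambda \in \RRR(\Gamma)$. By the right-to-left direction of Proposition \ref{di:prop:normal_form}, any team satisfying $\Lambda$ also satisfies $\Gamma$, so $\Lambda \models \Gamma$, and hence $\Lambda \models \psi$ by the assumption and transitivity of $\models$. Applying Proposition \ref{di:prop:normal_form} again to $\psi$, we get $\Lambda \models \bigvvee \RRR(\psi)$. Since $\RRR(\psi)$ is finite (by an easy induction on $\psi$), iterating the split property (Proposition \ref{di:prop:split}) finitely many times with $\Lambda \subseteq \PL$ as the left-hand side yields some $\alpha \in \RRR(\psi)$ with $\Lambda \models \alpha$, as required.

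For the backward direction, I would take any team $t$ with $t \models \Gamma$ and show $t \models \psi$. By the left-to-right direction of Proposition \ref{di:prop:normal_form}, there exists some $\Lambda \in \RRR(\Gamma)$ with $t \models \Lambda$. By the hypothesis, there exists $\alpha \in \RRR(\psi)$ such that $\Lambda \models \alpha$, so $t \models \alpha$. Since $\alpha \in \RRR(\psi)$, this immediately gives $t \models \bigvvee \RRR(\psi)$ (by the semantics of $\vvee$), and then $t \models \psi$ by Proposition \ref{di:prop:normal_form}.

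The only subtle step is the iterated use of the split property in the forward direction: one must ensure the set of resolutions is finite and that each intermediate $\bigvvee$-expression still has a classical left-hand side when split is applied. Finiteness of $\RRR(\psi)$ is immediate from the recursive definition, and the left-hand side remains $\Lambda \subseteq \PL$ throughout the iteration, so no further difficulty arises. Everything else is a direct application of the results already established in Section \ref{di:section:resolutions}.
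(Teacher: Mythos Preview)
Your proposal is correct and follows essentially the same approach as the paper, which simply cites Propositions \ref{di:prop:normal_form} and \ref{di:prop:split} without further detail. Your exposition spells out precisely the intended argument, including the relevant observation that $\Lambda\subseteq\PL$ so that the split property applies.
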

\begin{proof}
    By Propositions \ref{di:prop:normal_form} and \ref{di:prop:split}.
\end{proof}

\section{The System $\mathsf{GT}$} \label{di:section:double_disjunction_system}

We introduce the sequent calculus $\mathsf{GT}$ for $\PL$ (`G' stands for Gentzen, `T' for team). This is a $\mathsf{G3}$-style system (structural rules are incorporated into the logical rules; see Section \ref{di:section:structural_rule_variant} for an alternative system featuring some explicit structural rules) with deep-inference style rules for the inquisitive disjunction $\vvee$.

Given formulas $\phi_1,\ldots, \phi_n$ and multisets of formulas $\Gamma$ and $\Delta$, we use the notation $\phi_1,\ldots, \phi_n,\Ga$ to denote the multiset $\{\phi_1,\ldots,\phi_n\}\cup\Ga $, and the notation $\Ga,\De$ to denote the multiset $\Ga\cup \De$ (in both cases, $\cup$ denotes the multiset union operation). A \emph{sequent} is an expression of the form $\Gamma \seq \Delta$, where $\Gamma$ and $\Delta$ are finite multisets of formulas. In $\Gamma \seq \Delta$, the multiset $\Gamma$ is the \emph{antecedent} and $\Delta$ is the \emph{succedent}. The intended interpretation of $\Gamma \seq \Delta$ is $\ben \Ga \models \bigvee \De$ (i.e., $ \Ga \models \bigvee \De$); we say that $\Gamma \seq \Delta$ is \emph{valid} if $\ben \Ga \models \bigvee \De$. Let us emphasize that the commas in the succedent of a sequent $\phi_1,\ldots,\phi_n\seq \psi_1,\ldots ,\phi_m$ are interpreted as the split disjunction $\vee$ (while the commas in the antecedent are interpreted as conjunction $\land$): the sequent $\phi_1,\ldots,\phi_n\seq \psi_1,\ldots ,\phi_m$ is valid just in case for any team $t$, if $t\models \phi_1\land \ldots \land \phi_n$, then there are $t_1,\dots,t_n$ such that $t=t_1\cup \ldots \cup t_m$ and $t_1\models \psi_1, \ldots, t_m\models \psi_m$.

We make use of subformula occurrence substitutions to implement our deep-inference rules. We use $\phi[\psi/\chi]$ to denote the result of replacing a particular occurrence of the subformula $\chi$ in $\phi$ with $\psi$ (if $\chi$ does not occur in $\phi$, we let $\phi[\psi/\chi]:=\phi$) . In practice, we will usually use the abbreviation $\phi\{\psi\}$ for $\phi[\psi/\chi]$---$\chi$ will always be either arbitrary (with some restrictions), as in our rules, or clear from the context. We use this notation in the following manner: given a formula such as $\phi=p \land (q \vvee r)$, we write $\phi\{q \vvee r\}$ to denote $\phi$ and simultaneously indicate that $\phi$ has a subformula occurrence $q\vvee r$; i.e., $\phi\{q \vvee r\}$ is shorthand for $\phi[q \vvee r/q \vvee r]$. Then, in the same context, any $\phi\{\eta\}$ will refer to $\phi[\eta/q\vvee r]$. For instance, $\phi\{q\}$ will denote
$\phi[q/q\vvee r]$, i.e., $p\land q$.

\begin{definition}[The sequent calculus $\mathsf{GT}$]\
    \\
    
\noindent
\begin{longtable}{|C{0.50\textwidth} C{0.44\textwidth} |}
\hline
\multicolumn{2}{|c|}{\it Axioms} \\
&\\
 \AxiomC{$\Ga,p \seq p,\De$}
 \DisplayProof { \footnotesize $\mathsf{At}$} & \AxiomC{$\Ga,\bot\seq \De $} \DisplayProof {\it\footnotesize $\mathsf{L}\bot$}\\
 &\\
 \multicolumn{2}{|c|}{\it Logical rules} \\
 &\\
  \AxiomC{$\Gamma\Rightarrow \alpha,\Delta$}
 \RightLabel{\footnotesize$\mathsf{L}\lnot$}
 \UnaryInfC{$\Gamma,\lnot \alpha\Rightarrow \Delta$}
 \DisplayProof &
 \AxiomC{$\Ga,\alpha\Rightarrow \De$}
 \RightLabel{\footnotesize$\mathsf{R}\lnot$}
 \UnaryInfC{$\Ga\Rightarrow \lnot \alpha, \De$}
 \DisplayProof\\
&\\
 \AxiomC{$\Ga, \phi, \psi \seq \De$}
 \RightLabel{{\footnotesize $\mathsf{L}\land$}}
 \UnaryInfC{$\Ga, \phi\land \psi \seq \De$}
 \DisplayProof & 
 \AxiomC{$\Ga\seq \phi,\Lambda$}
 \AxiomC{$\Ga\seq \psi,\Lambda$}
 \RightLabel{{\footnotesize $\mathsf{R}\land$}}
 \BinaryInfC{$\Ga \seq \phi \land \psi,\Lambda,\De$}
 \DisplayProof \\
  &\\
 \AxiomC{$\Ga,\phi\seq \Lambda$}
 \AxiomC{$\Ga,\psi\seq \Lambda$}
 \RightLabel{{\footnotesize $\mathsf{L}\lor$}}
 \BinaryInfC{$\Ga,\phi\lor \psi\seq \Lambda,\De$}
 \DisplayProof & 
 \AxiomC{$\Ga\seq \phi,\psi,\De$}
 \RightLabel{{\footnotesize $\mathsf{R}\lor$}}
 \UnaryInfC{$\Ga\seq \phi\lor\psi,\De$}
 \DisplayProof \\
  &\\
 \AxiomC{$\Ga,\chi\{\phi_L\}\seq \De$}
 \AxiomC{$\Ga,\chi\{\phi_R\}\seq \De$}
 \RightLabel{{\footnotesize $\mathsf{L}\vvee^*$}}
 \BinaryInfC{$\Ga,\chi\{\phi_L\vvee \phi_R\}\seq \De$}
 \DisplayProof & 
 \AxiomC{$\Ga\seq \chi\{\phi_i\},\De$}
 \RightLabel{{\footnotesize $\mathsf{R}\vvee^*$}}
 \UnaryInfC{$\Ga\seq \chi\{\phi_L\vvee\phi_R\},\De$}
 \DisplayProof\\
 &\\
 \multicolumn{2}{|c|}{\it Cut} \\
 &\\
 \multicolumn{2}{|c|}{
  \AxiomC{$\Ga \seq \phi,\De$}
 \AxiomC{$\Pi,\phi \seq \Sig$}
 \RightLabel{{\footnotesize $\mathsf{Cut}$}}
 \BinaryInfC{$\Pi,\Ga\seq \De,\Sig$}
 \DisplayProof }\\
 &\\
 \multicolumn{2}{|c|}{\footnotesize $(*)$ $\chi\{\eta\}$ abbreviates $\chi[\eta/\psi]$, where the occurrence $\psi$ does not occur in the scope of a negation $\lnot$ in $\chi$; and $i\in \{L,R\}.$}\\
  \hline
\end{longtable}
\end{definition}
The sequent(s) above the line in each rule (application) are called the \emph{premise(s)} of that rule (application); the sequent below the line is the \emph{conclusion} of the rule. The formulas $\phi,\psi,\alpha,\chi\{\phi_i\}$ in the premise(s) are the \emph{active formulas} of the rule; the newly introduced formula in the conclusion is the \emph{principal formula} of the rule. The multisets $\Gamma$ ($\Delta,\Lambda)$ are the left (right) context of a given rule; both are also called the \emph{side formulas} of the rule. In $\mathsf{L}\vvee$ and $\mathsf{R}\vvee$, we call the subformula(s) $\phi_i$ ($i\in \{L,R\}$) in the premise(s) the \emph{active subformulas} of the rule and the subformula $\phi_L\vvee\phi_R$ in the conclusion the \emph{principal subformula} of the rule; for all other rules, their active subformula(s) are the same as their active formula(s), and similarly for their principal subformula(s). The formula $\phi$ in $\mathsf{Cut} $ is the \emph{cutformula}, and we say $\mathsf{Cut} $ is a \emph{cut on $\phi$}. We often refer to formula occurrences simply as formulas for simplicity, and disambiguate when it aids clarity.

The rules $\mathsf{L}\neg$, $\mathsf{R}\neg$, $\mathsf{R}\land$, and $\mathsf{L}\vee$ are standard Gentzen-style rules with syntactic restrictions: recall that $\alpha$ ranges over classical formulas whence the active formulas of $\mathsf{L}\lnot$ and $\mathsf{R}\lnot$ must be classical; and that $\Lam$ ranges over multisets of classical formulas, whence the premise right contexts of $\mathsf{R}\land $ and $\mathsf{L}\vee$ must consist of classical formulas. The restrictions on the negation rules are due to the corresponding restriction in the syntax of $\PLVVEE$. The restrictions that the premise right context in $\mathsf{R}\land$ and $\mathsf{L}\vee$ be classical are required for soundness since, for instance, $\mathsf{L}\vee$ without the restriction would not be sound: $p\models p\vvee \lnot p$ and $\lnot p \models p\vvee \lnot p$, but $p\vee \lnot p\nmodels p\vvee \lnot p$. The failure of soundness in general is essentially due to the failure of union closure: restricting to classical (and hence union-closed) premise right contexts ensures soundness. These two rules also feature implicit weakening ($\De$ is added to the right context) to make up for the loss in strength incurred due to the syntactic restrictions---they allow us to prove that right weakening is admissible in the system (see Lemma \ref{di:lemma:weakening}).

Similarly to the logical rules, the structural rules of weakening and contraction (implicit in our $\mathsf{G3}$-style system) are only sound for sequents with formulas with specific closure properties. Specifically,  right weakening $[ \Ga\seq \De,\nu $ implies $ \Ga\seq \De,\nu,\nu]$ is sound for $\nu$ if $\nu$ has the empty team property (or if $\nu$ is any formula in our setting); and right contraction $[ \Ga\seq \De,\nu,\nu $ implies $ \Ga\seq \De,\nu]$ is sound for $\nu$ if $\nu$ is union closed (or if $\nu$ is classical). To see why right contraction is not sound in general with respect to formulas which are not union closed, note that $(p\vvee \lnot p)\vee(p\vvee \lnot p) \seq (p\vvee \lnot p),(p\vvee \lnot p)$ is valid whereas $(p\vvee \lnot p)\vee(p\vvee \lnot p) \seq (p\vvee \lnot p)$ is not. We show corresponding admissibility results for the structural rules in the next section.

The rules $\mathsf{L}\vvee$ and $\mathsf{R}\vvee$ are our deep-inference rules. They are standard classical disjunction rules, strengthened to allow the active/principal subformula(s) to appear within any position (not in the scope of a negation) in the active/principle formula(s). The soundness of these rules follows from the fact that $\land $, $\lor $, and $\vvee$ distribute over $\vvee$. Below is an example application of the rule $\mathsf{L}\vvee$.
\begin{center}\AxiomC{$\Ga,p\land (q\vee r) \seq \Delta$}\AxiomC{$\Ga,p\land (s \vvee (q \land \lnot p))\seq \De$} \RightLabel{\footnotesize$\mathsf{L}\vvee$}\BinaryInfC{$\Ga,p\land ((q\vee r)\vvee (s \vvee (q \land \lnot p)) \seq \De $}\DisplayProof\end{center}

Given any set of sequent calculus rules $\mathsf{C}$, we let $\mathsf{C}$ also denote the system consisting of the rules in $\mathsf{C}$. Let $\mathsf{G3cp}$ (for $\PL$) be the system $\mathsf{GT}\setminus \{\mathsf{L}\vvee,\mathsf{R}\vvee\}$. This is the usual $\mathsf{G3cp}$-system (see, e.g., \cite[pp. 77--78]{troelstra}) without the implication rules, but with the standard rules for the negation. The system $\mathsf{G3cp}$ for $\PL$ is a $\mathsf{G3}$-style cutfree complete system for $\PL$---the soundness (given the standard semantics for classical propositional logic) and cutfree completeness of this system follow easily from that for the usual $\mathsf{G3cp}$. We will refer to $\mathsf{G3cp}$ for $\PL$ simply as `$\mathsf{G3cp}$' since distinguishing between the two systems is unnecessary in the context of this paper. We will often cite results concerning the usual $\mathsf{G3cp}$; that they also hold for $\mathsf{G3cp}$ for $\PL$ is easily confirmed. We write $\mathsf{C}^{-}$ for the $\mathsf{Cut}$-free fragment of $\mathsf{C}$, i.e., $\mathsf{C}^{-}:=\mathsf{C}\setminus \mathsf{Cut}$. We write $\vdash_{\scriptsize \mathsf{C}} \Gamma \seq \Delta$ (or simply $\vdash  \Gamma \seq \Delta$ when the system is clear from the context) if there is a derivation of $\Gamma\seq \Delta$ in calculus $\mathsf{C}$. If $\mathcal{D}$ is a derivation of $ \Gamma\seq \Delta$ in calculus $\mathsf{C}$, we say that $\mathcal{D}$ \emph{witnesses} $\vdash_{\scriptsize \mathsf{C}} \Gamma \seq \Delta$. We define the \emph{height} of a derivation $\mathcal{D}$ inductively: if $\mathcal{D}$ consists of a single axiom, the height of $\mathcal{D}$ is one; otherwise the height of $\mathcal{D}$ is $1$ plus the maximum of the heights of the subderivations ending in the premises of the final rule application in $\mathcal{D}$.

We now verify that the system is sound given the team semantics for $\PLVVEE$.

\begin{theorem}[Soundness of $\mathsf{GT}$] \label{di:theorem:GT_soundness}
    $\vdash \Ga \seq \De$ implies $\Ga\models\bigvee\De$.
\end{theorem}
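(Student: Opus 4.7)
The plan is to induct on the height of a $\mathsf{GT}$-derivation $\mathcal{D}$ of $\Ga \seq \De$, showing at each step that $\bigwedge \Ga \models \bigvee \De$. The base cases $\mathsf{At}$ and $\mathsf{L}\bot$ are immediate from the semantics. For the inductive step I would check each rule in turn, with the bookkeeping relying on three facts: downward closure of all $\PLVVEE$-formulas, union closure of classical formulas, and the empty team property.

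Among the purely classical rules, the only non-routine cases are $\mathsf{R}\land$ and $\mathsf{L}\vee$, whose premises share a classical right context $\Lambda$. For $\mathsf{R}\land$, given $t\models\Ga$ the premises yield splits $t=s_1\cup u_1$ and $t=s_2\cup u_2$ with $s_1\models\phi$, $s_2\models\psi$, and $u_1,u_2\models\bigvee\Lambda$. Setting $s:=s_1\cap s_2$, downward closure gives $s\models\phi\land\psi$; since $t\setminus s\subseteq u_1\cup u_2$ and $\bigvee\Lambda$ is classical (hence union-closed, by Corollary \ref{di:coro:CPL_entailment_eq_team_entailment} and the associated proposition) so $u_1\cup u_2\models\bigvee\Lambda$, downward closure yields $t\setminus s\models\bigvee\Lambda$. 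The extra context $\De$ in the conclusion is absorbed by appending empty splits, one per $\delta\in\De$, using the empty team property. The rule $\mathsf{L}\vee$ is treated analogously, and the negation rules are handled using flatness of classical formulas (to move between satisfaction at $t$ and at singletons $\{v\}\subseteq t$).

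The main obstacle is establishing soundness of the deep-inference rules $\mathsf{L}\vvee$ and $\mathsf{R}\vvee$, for which I would first prove a \emph{replacement lemma}: if the distinguished occurrence of $\psi$ in $\chi\{\psi\}$ does not lie in the scope of a negation, then
\[
\chi\{\phi_L \vvee \phi_R\} \equiv \chi\{\phi_L\} \vvee \chi\{\phi_R\}.
\]
This is proved by induction on $\chi$; the side condition ensures the recursion never descends under a $\lnot$, and at each constructor step the inductive hypothesis combines with the distributivity of $\land$, $\vee$, and $\vvee$ over $\vvee$ already used in Proposition \ref{di:prop:normal_form}. With the lemma in hand, soundness of $\mathsf{R}\vvee$ is immediate: if $t=s\cup u$ with $s\models\chi\{\phi_i\}$, then $s\models\chi\{\phi_L\}\vvee\chi\{\phi_R\}$ and hence $s\models\chi\{\phi_L\vvee\phi_R\}$. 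Soundness of $\mathsf{L}\vvee$ uses the other direction of the lemma: if $t\models\chi\{\phi_L\vvee\phi_R\}$, then $t\models\chi\{\phi_L\}$ or $t\models\chi\{\phi_R\}$, and either premise then delivers $t\models\bigvee\De$.

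Finally, $\mathsf{Cut}$ is routine: if $t\models\Pi,\Ga$, the left premise splits $t=s\cup u$ with $s\models\phi$ and $u\models\bigvee\De$; downward closure gives $s\models\Pi$, so the right premise yields $s\models\bigvee\Sig$, and $t=s\cup u$ witnesses $t\models\bigvee\De\vee\bigvee\Sig=\bigvee(\De,\Sig)$. The only genuinely new ingredient is the replacement lemma for the deep-inference rules; all remaining cases reduce to combining downward closure, union closure for the classical multiset $\Lambda$, and empty-team padding for the residual context $\De$.
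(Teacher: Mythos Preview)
Your proposal is correct and follows essentially the same approach as the paper's proof: induction on derivation height, with the key cases $\mathsf{R}\land$ and $\mathsf{L}\vee$ handled via downward closure plus union closure of the classical context $\Lambda$, and the deep-inference rules handled via the equivalence $\chi\{\phi_L\vvee\phi_R\}\equiv\chi\{\phi_L\}\vvee\chi\{\phi_R\}$ (which the paper invokes directly from distributivity rather than isolating as a separate lemma). You are in fact slightly more careful than the paper in explicitly noting the empty-team padding needed to absorb the extra weakening context $\De$ in $\mathsf{R}\land$ and $\mathsf{L}\vee$.
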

\begin{proof}
By induction on the height of derivations; we separate into different cases based on the final rule applied in the derivation. We provide proof details only for the non-trivial cases.
\begin{itemize}
    \item[-] $\mathsf{L}\lnot$: Assume $ \Ga \models \alpha \vee\bigvee\De$. We want to show $ \lnot \alpha ,\Ga\models \bigvee \De$, so let $t\models \lnot \alpha \land \ben \Ga$. By $ \Ga \models \alpha \vee\bigvee\De$, we have $t\models \alpha \vee\bigvee\De$, so there are $s,u$ with $t=s\cup u$, $s\models \alpha$, and $u\models \bigvee \De$. By $s\models \alpha$, $t\models \lnot \alpha$, and the flatness of $\alpha$ and $\lnot \alpha$, we have that for each $v\in s$, $v\models \alpha\land \lnot \alpha$; therefore $s=\emptyset$, whence $t=u\models \bigvee \De$.
    \item[-] $\mathsf{R}\lnot$: Assume $\alpha,\Ga\models\bigvee \De$. We want to show $ \Ga\models \lnot \alpha\vee \bigvee\De$, so let $t\models  \Ga$. Let $s:=\{v\in t\mid v\models \alpha\}$ and $u:=t\setminus s$. By the flatness of $\alpha$ we have $s \models \alpha$, and by downward closure we have $s\models \Ga$; therefore by $\alpha, \Ga\models\bigvee \De$, we have $s\models \bigvee \De$. By the flatness of $\lnot \alpha$, we have $u\models \lnot \alpha$, so that by $t=s\cup u$, we have $t\models \lnot \alpha \vee \bigvee \De$.
    \item[-] $\mathsf{R}\land$: Assume $\Ga \models \phi \vee\bigvee \Lambda$ and $ \Ga\models \psi\vee\bigvee \Lambda$. We want to show $\Ga \models  (\phi\land \psi)\vee\bigvee \Lambda$, so let $t\models  \Ga$. Then $t\models \phi \vee\bigvee \Lambda$ and $t\models \psi \vee\bigvee \Lambda$, so $t=s_1\cup u_1$ where $s_1\models \phi$ and $u_1\models \bigvee \Lambda$, and $t=s_2\cup u_2$ where $s_2\models \psi$ and $u_2\models \bigvee \Lambda$. Then also $t= (s_1 \cap s_2)\cup u_1\cup u_2$. By downward closure, $s_1 \cap s_2\models  \phi \land \psi$, and by the union closure of $\bigvee \Lambda$, we have $u_1\cup u_2\models \bigvee \Lambda$. Therefore, $t\models (\phi\land \psi)\vee\bigvee \Lambda$.
    \item[-] $\mathsf{L}\vee$: Assume $\phi, \Ga\models \bigvee \Lambda$ and $\psi,\Ga\models \bigvee \Lambda$. We want to show $\phi \vee\psi, \Ga\models \bigvee \Lambda$, so let $t\models (\phi \vee\psi)\land \ben \Ga$. Then $t=s\cup u$ where $s \models \phi$ and $u\models \psi$, and $t\models \Ga$. By downward closure, $s \models \Ga$ so that $s \models\bigvee \Lambda$; similarly $u\models \bigvee\Lambda$. By the union closure of $\bigvee \Lambda$, we have $t\models\bigvee \Lambda$.
    \item[-] $\mathsf{L}\vvee$: Assume $\chi \{\phi_L\}, \Ga  \models \bigvee \De$ and $\chi \{\phi_R\},\Ga \models \bigvee \De$. We want to show $ \chi \{\phi_L\vvee \phi_R\},\Ga \models \bigvee \De$, so let $t\models \chi \{\phi_L\vvee \phi_R\}\land \ben \Ga  $. Since $\land $, $\vee$, and $\vvee$ distribute over $\vvee$ (i.e., $\phi\vee(\psi\vvee\chi)\equiv(\phi\vee\psi)\vvee(\phi\vee\chi)$, $\phi\land(\psi\vvee\chi)\equiv(\phi\land\psi)\vvee(\phi\land\chi)$, and $\phi\vvee(\psi\vvee\chi)\equiv(\phi\vvee\psi)\vvee(\phi\vvee\chi)$), $\chi\{\phi_L\vvee\phi_R\}\equiv \chi \{\phi_L\}\vvee \chi \{\phi_R\}$, so $t\models  \chi \{\phi_L\}$ or $ \chi \{\phi_R\}$. In either case, by our assumptions, $t\models \bigvee \De$.
    \item[-] $\mathsf{R}\vvee$: Follows immediately from the induction hypothesis and the fact that $\land $, $\vee$, and $\vvee$ distribute over $\vvee$, whence $\chi\{\phi_L\vvee\phi_R\}\equiv \chi \{\phi_L\}\vvee \chi \{\phi_R\}$.

    \item[-] $\mathsf{Cut}$: Assume $ \Ga \models \phi \vee \bigvee \De$ and $\phi, \Pi \models \bigvee \Sigma$. We want to show $ \Ga ,\Pi \models \bigvee \De \vee \bigvee \Sigma$, so let $t\models \ben \Ga \land \ben \Pi$. By $\Ga \models \phi \vee \bigvee \De$, we have $t\models \phi \vee \bigvee \De$, whence $t=s\cup u$, where $s\models \phi$ and $u\models\bigvee \Delta $. By downward closure, $s\models \Pi$, whence $s\models \bigvee \Sigma$. Therefore, $t\models \bigvee \Delta\vee \bigvee \Sigma$. \qedhere
\end{itemize}
\end{proof}

Observe that we employed downward closure in our proof of the soundness of $\mathsf{Cut}$---$\mathsf{Cut}$ is not sound in general if the comma on the right is interpreted as the split disjunction $\vee$.

\begin{theorem}[Cut elimination for $\mathsf{G3cp}$] \label{di:theorem:classical_cut_elimination} \cite[pp. 94--98]{troelstra} If $\mathcal{D}$ witnesses $\vdash_{\scriptsize \mathsf{G3cp}} \Xi \seq \Lambda$, there is an effective procedure for transforming $\mathcal{D}$ into a derivation $\mathcal{D}'$ witnessing $\vdash_{\scriptsize \mathsf{G3cp}^-} \Xi \seq \Lambda$.
\end{theorem}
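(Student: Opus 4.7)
The theorem is a standard cut elimination result for a $\mathsf{G3}$-style classical propositional calculus, and the proof is cited from \cite[pp. 94--98]{troelstra}. The plan is therefore to follow Troelstra's cut elimination procedure, verifying that the two deviations of our system from the one treated there --- namely, the absence of implication rules and the inclusion of standard $\mathsf{G3}$-style rules for negation --- do not obstruct the argument. Note also that inside $\mathsf{G3cp}$ all formulas are classical, so the syntactic restrictions on $\mathsf{R}\land$ and $\mathsf{L}\vee$ imposed in the definition of $\mathsf{GT}$ are vacuous here.

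First, I would establish the prerequisite auxiliary results: the height-preserving admissibility of weakening and of contraction on both sides. These follow by straightforward induction on the height of derivations, using the shared-context form of the rules; the proofs are exactly those in \cite{troelstra}, restricted to the connectives present in our calculus. The main argument then proceeds by a double induction: the outer induction is on the complexity of the cutformula $\phi$, and the inner induction is on the sum of the heights of the derivations of the two premises of a topmost cut. Given such a cut, I would split into the usual cases: (i) one of the premises is an axiom ($\mathsf{At}$ or $\mathsf{L}\bot$), in which case the conclusion is obtained from the other premise via height-preserving weakening; (ii) a \emph{principal cut}, where the cutformula is principal in both premises --- here the cut reduces to one or two cuts on proper subformulas of the cutformula, of strictly lower complexity, to which the outer induction hypothesis applies; and (iii) the cutformula is not principal in at least one premise, in which case the cut is permuted upward past the last rule applied in that premise, invoking the inner induction hypothesis on strictly shorter subderivations.

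The only detail worth singling out is case (iii) applied to $\mathsf{R}\land$ and $\mathsf{L}\vee$, whose two premises share the non-cut context and whose conclusion absorbs an additional side context $\De$ by implicit weakening. When a cut is permuted past such a rule, the cut is first applied to each of its premises (the inner induction hypothesis yielding cutfree derivations), after which the shared-context rule is reapplied; duplicated context formulas arising from the combination of the two sides of the cut are then removed using the already-established admissibility of contraction, and any extra weakening is absorbed into the side context. This is a routine extension of the standard argument and presents no substantial obstacle. The procedure is effective by construction, and iterating it on the topmost cuts of $\mathcal{D}$ yields the desired cutfree derivation $\mathcal{D}'$ of $\Xi \seq \Lambda$ in $\mathsf{G3cp}^-$.
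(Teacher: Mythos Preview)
Your proposal is correct and faithfully outlines the standard Troelstra-style cut elimination argument. Note, however, that the paper does not actually give a proof of this theorem: it is stated as a known result with a bare citation to \cite[pp.~94--98]{troelstra}, and the paper simply remarks that the minor differences from the usual $\mathsf{G3cp}$ (no implication rules, primitive negation rules, and the implicit weakening built into $\mathsf{R}\land$ and $\mathsf{L}\vee$) are easily seen not to affect the result. Your sketch is thus more detailed than what the paper provides, and in particular your observation that the implicit-weakening side context $\De$ in $\mathsf{R}\land$ and $\mathsf{L}\vee$ requires a small extra check when permuting cuts upward (handled by reapplying the rule with a suitably chosen weakening context, plus admissible contraction where needed) is exactly the right thing to flag.
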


\begin{corollary}[Cutfree classical completeness] \label{di:coro:cutfree_classical_completeness} $\Xi \models \bigvee \Lambda$ implies $\vdash_{\scriptsize \mathsf{G3cp}^-} \Xi \seq \Lambda$.
\end{corollary}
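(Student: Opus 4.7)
The plan is to reduce the team-semantic statement to the standard single-valuation semantics of $\PL$, and then invoke the well-known cutfree completeness of $\mathsf{G3cp}$.

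First I would observe that since $\Xi \cup \{\bigvee \Lambda\}$ consists entirely of classical formulas (note that $\bigvee \Lambda$ is classical because $\Lambda \subseteq \PL$ and $\vee$ applied to classical formulas yields a classical formula by the definition of the syntax), Corollary \ref{di:coro:CPL_entailment_eq_team_entailment} applies. Hence $\Xi \models \bigvee \Lambda$ is equivalent to $\Xi \models^c \bigvee \Lambda$, the usual single-valuation entailment.

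Next I would appeal to the cutfree completeness of the standard $\mathsf{G3cp}$ calculus with respect to single-valuation semantics, which is a textbook result (see, e.g., \cite[pp. 77--78]{troelstra}). Since $\mathsf{G3cp}$ here is exactly that system (minus the implication rules, which are irrelevant in a $\lnot, \land, \vee, \bot$ language), we immediately obtain $\vdash_{\scriptsize \mathsf{G3cp}^-} \Xi \seq \Lambda$ from $\Xi \models^c \bigvee \Lambda$. Alternatively, one can route through Theorem \ref{di:theorem:classical_cut_elimination}: use completeness of $\mathsf{G3cp}$ with cut, then apply the cut elimination procedure.

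There is no real obstacle here --- the corollary is purely a matter of lining up the team semantics with the classical semantics on classical formulas (via Corollary \ref{di:coro:CPL_entailment_eq_team_entailment}) and then citing a known result for $\mathsf{G3cp}$. The only thing to be mildly careful about is confirming that $\bigvee \Lambda$ is indeed classical (so that Corollary \ref{di:coro:CPL_entailment_eq_team_entailment} applies to it), and that the variant of $\mathsf{G3cp}$ used here inherits cutfree completeness from the standard formulation --- both of which are immediate from the definitions.
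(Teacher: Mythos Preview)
Your proposal is correct and matches the paper's approach: the corollary is stated without proof immediately after Theorem \ref{di:theorem:classical_cut_elimination}, and the paper has already remarked that $\mathsf{G3cp}$ for $\PL$ is cutfree complete with respect to the standard semantics, so the result follows exactly as you describe by combining Corollary \ref{di:coro:CPL_entailment_eq_team_entailment} with the known cutfree completeness of $\mathsf{G3cp}$ (or, equivalently, completeness of $\mathsf{G3cp}$ followed by Theorem \ref{di:theorem:classical_cut_elimination}).
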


Let us end this section by commenting on why it is necessary to include deep-inference style rules in our system. One may attempt to obtain a sequent calculus for $\PLVVEE$ by simply extending the system $\mathsf{G3cp}$ with the following seemingly natural translations of natural deduction rules for $\PLVVEE$ \cite{yangvaananen2016} into sequent calculus rules:

\begin{tabular}{C{0.50\textwidth} C{0.44\textwidth} }
 \AxiomC{$\Ga,\phi_L\seq \De$}
 \AxiomC{$\Ga,\phi_R\seq \De$}
 \RightLabel{{\footnotesize $\mathsf{L}\vvee^-$}}
 \BinaryInfC{$\Ga,\phi_L\vvee \phi_R\seq \De$}
 \DisplayProof & 
 \AxiomC{$\Ga\seq \phi_i,\De$}
 \RightLabel{{\footnotesize $\mathsf{R}\vvee^-$}}
 \UnaryInfC{$\Ga\seq \phi_L\vvee\phi_R,\De$}
 \DisplayProof\\
 &\\
\AxiomC{$\Ga,\phi\vee\psi_L \seq \De$}\AxiomC{$\Ga,\phi\vee\psi_R\seq \De$} \RightLabel{\footnotesize$\mathsf{LDstr}$}\BinaryInfC{$\Ga,\phi\vee(\psi_L\vvee\psi_R)\seq \De $}\DisplayProof &
 \AxiomC{$\Ga\seq \phi\vee(\psi_L\vvee\psi_R),\De$}
 \RightLabel{{\footnotesize $\mathsf{R Dstr}$}}
 \UnaryInfC{$\Ga\seq (\phi\vee\psi_L)\vvee(\phi\vee\psi_R),\De$}
 \DisplayProof\\
\end{tabular}\\

This system is, however, not cutfree complete. For instance, the following valid sequent is not derivable:
$$ (r\land x)\vee(( (p\land x)\vvee (q \land x))\vee(y\land x)) \seq (x\land (r\vee (p\vee y)))\vvee (x\land (r\vee(q\vee y))).$$
There is no rule of this system that any putative derivation of this sequent could have concluded with. Any putative derivation could not have concluded with $\mathsf{LDstr}$ or $\mathsf{RDstr}$ because the principal formula of such a rule application would not be of the right form in either case. It could not have concluded with $\mathsf{R}\vvee^-$ or $\mathsf{L}\vee$ because none of the possible premises of such a rule application are valid. For instance, one of the two possible premises of a concluding application of $\mathsf{R}\vvee^-$ is
$$ (r\land x)\vee(( (p\land x)\vvee (q \land x))\vee(y\land x)) \seq x\land (r\vee (p\vee y)),$$
and this sequent is not valid. Finally, it could not have concluded with any other rule because the main connective of the principal formula of such a rule application would not be the connective of that rule.

The problem is that the distributivity rules do not reach sufficiently deep into the active/principal formula (we conjecture that one can formulate a notion of subformula occurrence depth and show that a system with the distributivity rules is cutfree complete for formulas in which $\vvee$ only appears up to certain depth). The deep-inference rules $\mathsf{L}\vvee$ and $\mathsf{R}\vvee$ fix this issue and have the added benefit of allowing for a system consisting only of a single introduction (right) and single elimination (left) rule for each connective.

\section{Weakening, Inversion, Contraction}\label{di:section:inversion_etc}

We mentioned in Section \ref{di:section:double_disjunction_system} that in our setting, right weakening is sound for all formulas, whereas right contraction is only (guaranteed to be) sound for union-closed formulas. In this section, we further show that (left and right) weakening is height-preserving admissible in $\mathsf{GT}^-$; that inversion for all rules except $\mathsf{R}\vvee$ is height-preserving admissible; that left contraction is height-preserving admissible with respect to all formulas; and that right contraction is admissible with respect to classical formulas.

Let $\vdash^n_{\mathsf{GT}^-} \Ga\seq \De$ (or simply $\vdash^n \Ga\seq \De$) denote the fact that there is a $\mathsf{GT}^-$-derivation of $\Ga\seq \De$ of height at most $n$.

\sloppy
\begin{lemma}[Height-preserving weakening] \label{di:lemma:weakening}
Both left and right weakening are height-preserving admissible:
\begin{itemize}
    \item[-] Left weakening: $\vdash^n\Gamma\seq \Delta$ implies $\vdash^n\Gamma,\nu\seq \Delta$.
    \item[-] Right weakening: $\vdash^n\Gamma\seq \Delta$ implies $\vdash^n\Gamma\seq \nu,\Delta$.
\end{itemize}
\end{lemma}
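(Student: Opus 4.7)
The plan is to prove both items simultaneously by straightforward induction on the height $n$ of the given $\mathsf{GT}^-$-derivation $\mathcal{D}$, considering the last rule applied. Height-preservation comes from the fact that at each step we will either leave premise derivations untouched or invoke the induction hypothesis once per premise without increasing its height.

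For the base case $n=1$, the derivation is an axiom. For $\mathsf{At}$, extending $\Gamma,p\seq p,\Delta$ with $\nu$ on either side yields a sequent still of the form $\Gamma',p\seq p,\Delta'$, hence still an instance of $\mathsf{At}$. For $\mathsf{L}\bot$, the sequent $\Gamma,\bot\seq\Delta$ remains an $\mathsf{L}\bot$-axiom after $\nu$ is inserted anywhere.

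For the inductive step I would split on the last rule. \textbf{Left weakening} is the uniform case: no rule of $\mathsf{GT}^-$ imposes any restriction on the left context, so for every rule we simply apply the induction hypothesis to each premise (adding $\nu$ to its $\Gamma$) and reapply the same rule; the height grows by at most one, matching the original. \textbf{Right weakening} is almost as uniform, with a small observation for two rules. For $\mathsf{R}\lnot$, $\mathsf{L}\lnot$, $\mathsf{L}\wedge$, $\mathsf{R}\vee$, $\mathsf{L}\vvee$, and $\mathsf{R}\vvee$ the right context is arbitrary, so one simply invokes the induction hypothesis on each premise (adding $\nu$ to $\Delta$) and reapplies. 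The only rules with a syntactic restriction on the premise right context are $\mathsf{R}\wedge$ and $\mathsf{L}\vee$, whose premises have a classical right context $\Lambda$; but their conclusions already feature an arbitrary $\Delta$ appended. Hence for these two cases one does not even invoke the induction hypothesis: one leaves the subderivations of the premises untouched and absorbs $\nu$ into the $\Delta$-part of the conclusion, reapplying the rule with the same $\Lambda$ and with $\Delta,\nu$ in place of $\Delta$. This is exactly the design purpose of the built-in implicit weakening in those two rules, as remarked after the definition of the calculus.

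I do not anticipate a real obstacle: no rule constrains left contexts, and the only rules with restricted right-context premises are $\mathsf{R}\wedge$ and $\mathsf{L}\vee$, whose conclusions carry an arbitrary right-side extension that absorbs the weakening formula. So the classical $\mathsf{G3cp}$ argument (see e.g.~\cite[pp.~77--78]{troelstra}) transfers with only this small verification. The rules $\mathsf{L}\vvee$ and $\mathsf{R}\vvee$ do carry a side-condition about the principal occurrence not lying under a negation, but weakening only alters the contexts and never the active/principal formulas, so these side-conditions remain intact automatically.
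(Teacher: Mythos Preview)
Your proposal is correct and follows essentially the same approach as the paper: induction on derivation height, with the key observation that for $\mathsf{R}\land$ and $\mathsf{L}\vee$ right weakening is handled directly via the implicit weakening built into their conclusions rather than via the induction hypothesis. The paper's proof is organized identically, even using the same illustrative example of $\mathsf{R}\land$.
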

\fussy
\begin{proof}
By induction on $n$. For both items, if $n=1$, the derivation $\mathcal{D}$ witnessing the sequent on the left of the item consists of a single axiom, and then the sequent on the right of the item follows by the same axiom. For the inductive step, we assume both items hold for $n$ and prove that they hold for $n+1$. Letting $R$ denote the final rule applied in $\vdash^{n+1}\Gamma\seq \Delta$, if $R$ is not $\mathsf{R}\land $ or $\mathsf{L}\vee$, both items follow by applying the induction hypothesis to the (subderivation(s) ending in the) premise(s) of $R$, and then applying $R$. If $R$ is $\mathsf{R}\land $ or $\mathsf{L}\vee$, left weakening follows by the induction hypothesis and $R$, as  above, and right weakening follows immediately by the implicit weakening in $R$. For instance, if $R$ is $\mathsf{R}\land$, it is of the form:
\[ 
 \AxiomC{$\Ga\seq\phi,\Lambda$}
 \AxiomC{$\Ga\seq\psi ,\Lambda$}
 \RightLabel{{\footnotesize $\mathsf{R}\land$}}
 \BinaryInfC{$\Ga\seq \phi\land\psi ,\Lambda,\De'$}
 \DisplayProof 
\]
Left weakening: By the induction hypothesis, $\vdash^n\Ga,\nu \seq \phi,\Lambda$ and $\vdash^n\Ga, \nu \seq \psi,\Lambda$, whence by $\mathsf{R}\land$, we have $\vdash^{n+1}\Ga,\nu \seq \phi \land \psi,\Lambda,\De'$.

Right weakening: By $\mathsf{R}\land$ applied (with implicit weakening right context $\Delta',\nu$) to the premises, $\vdash^{n+1}\Ga \seq \nu,\phi \land \psi,\Lambda,\De'$.
\end{proof}

Height-preserving inversion is admissible for all rules except for $\mathsf{R}\vvee$; $\mathsf{R}\vvee$ instead has an inversion-like property which corresponds to the split property (Proposition \ref{di:prop:split}): if a sequent $\Xi\seq \chi\{\phi_L\vvee\phi_R\}$ with classical antecedent $\Xi$ is derivable, then so is either $\Xi\seq \chi\{\phi_L\}$ or $\Xi\seq \chi\{\phi_R\}$ (inversion in the standard sense is not sound for $\mathsf{R}\vvee$ as, for instance, $p\vvee \lnot p\seq p\vvee \lnot p$ is valid whereas $p\vvee \lnot p\seq p$ is not). Observe that for $\mathsf{L}\vee$ and $\mathsf{R}\land$, the inverted rules, unlike the original rules, have no syntactic restrictions and feature no implicit weakening.

We use the following convention. If $\psi$ and $\chi$ are subformula occurrences of $\phi$ such that neither is a subformula occurrence of the other, we write $\phi=\phi\{\psi\}\{\chi\}$ (contrast with $\phi\{\psi\{\chi\}\}$).

\begin{lemma}[Height-preserving inversion]
 \label{di:lemma:inversion}\
\begin{description}
\item[$(\mathsf{L}\lnot)$] $\vdash^{n} \Ga ,\lnot \alpha\seq \De$ implies $\vdash^{n} \Ga\seq\alpha , \De$; 
\item[$(\mathsf{R}\lnot)$] $\vdash^{n} \Ga \seq \lnot \alpha,\De$ implies $\vdash^{n} \Ga,\alpha \seq \De$; 
\item[$(\mathsf{L}\land)$] $\vdash^{n} \Ga, \phi\land\psi \seq \De$ implies $\vdash^{n} \Ga, \phi,\psi \seq \De$; 
\item[$(\mathsf{R}\land)$] $\vdash^{n} \Ga \seq \phi\land\psi,\De$ implies $\vdash^{n} \Ga \seq \phi,\De$ and $\vdash^{n} \Ga \seq \psi,\De$; 
\item[$(\mathsf{L}\lor)$] $\vdash^{n} \Ga, \phi\lor\psi \seq \De$ implies $\vdash^{n} \Ga, \phi\seq \De$ and $\vdash^{n} \Ga, \psi\seq \De$;
\item[$(\mathsf{R}\lor)$] $\vdash^{n} \Ga \seq \phi \vee \psi,\De$ implies $\vdash^{n} \Ga \seq \phi,\psi,\De$; 
\item[$(\mathsf{L}\vvee)$] $\vdash^{n} \Ga, \chi\{\phi_L\vvee\phi_R\} \seq \De$ implies $\vdash^{n} \Ga, \chi\{\phi_L\}\seq \De$ and $\vdash^{n} \Ga, \chi\{\phi_R\}\seq \De$; 
\item[$(\mathsf{R}\vvee)$] $\vdash^{n} \Xi \seq  \chi\{\phi_L\vvee\phi_R\},\De$ implies $\vdash^{n} \Xi\seq \chi\{\phi_L\}, \De$ or $\vdash^{n} \Xi\seq  \chi\{\phi_R\},\De$. 
\end{description}
\end{lemma}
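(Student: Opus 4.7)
The plan is to prove all eight items simultaneously by induction on the height $n$ of the derivation; the simultaneous formulation is needed because inverted premises of one item will be fed through rule applications whose inversion is being established by another item. The base case $n=1$ is dispatched by inspection: if the given sequent is an instance of $\mathsf{At}$ or $\mathsf{L}\bot$, then the witnessing atom $p$ or the $\bot$ lies in a multiset that remains intact after the inversion. It cannot coincide with $\lnot\alpha$, $\phi\land\psi$, $\phi\vee\psi$, $\chi\{\phi_L\vvee\phi_R\}$ (which, by convention, really contains a $\vvee$ occurrence), since none of these are atomic; so each inverted sequent is again an axiom.

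For the inductive step, I would case-split on the last rule $R$, splitting each item into two subcases. First, if $R$ is the very rule being inverted and its principal (sub)formula is the distinguished occurrence, the premise(s) of $R$ already furnish what we need. For rules with implicit right-weakening, namely $\mathsf{R}\land$ and $\mathsf{L}\vee$, the premises carry the smaller right context $\Lambda$ instead of $\Lambda,\De$, and I would invoke height-preserving weakening (Lemma \ref{di:lemma:weakening}) to recover the full $\De$ without increasing the height. Second, if $R$ acts elsewhere, I would apply the induction hypothesis for the same item to each premise of $R$ and then reapply $R$. The one piece of bookkeeping is the case where the distinguished formula lives entirely inside the implicit weakening context $\De$ of an $\mathsf{R}\land$ or $\mathsf{L}\vee$ conclusion: there the premise does not contain the occurrence at all, and I would simply reapply $R$ with a modified $\De$ already containing the inverted formula(s). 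A parallel manoeuvre handles the $\mathsf{L}\vvee$/$\mathsf{R}\vvee$ items when the distinguished $\vvee$ sits strictly inside the active subformula of another deep-inference step.

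The main obstacle is the $\mathsf{R}\vvee$ clause, whose conclusion is disjunctive (``or'') rather than conjunctive. Here the hypothesis that $\Xi$ is classical is essential: it prevents $\mathsf{L}\vvee$ from being the last rule (which could otherwise force two branches to commit to incompatible choices of $i\in\{L,R\}$ that we would then be unable to combine), and it keeps the premise antecedents classical so the induction hypothesis for $\mathsf{R}\vvee$ reapplies cleanly. The only delicate subcase is when the last rule is itself an $\mathsf{R}\vvee$ whose principal $\vvee$ is a \emph{different} occurrence, and whose eliminated disjunct $\phi_{i'}'$ happens to contain the distinguished $\phi_L\vvee\phi_R$. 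Then the premise no longer carries the distinguished occurrence at all, but this is harmless: a fresh application of $\mathsf{R}\vvee$ that keeps $\phi_{i'}'$ at the inner position and freely inserts either $\phi_L$ or $\phi_R$ at the outer position yields the required sequent. In every other configuration, the induction hypothesis delivers one of $\chi\{\phi_L\}$ or $\chi\{\phi_R\}$ in the inverted premise, and reapplying $R$ preserves both this disjunctive conclusion and the height bound.
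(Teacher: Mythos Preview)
Your proposal is correct and follows essentially the same approach as the paper: induction on derivation height with a case split on the last rule $R$, distinguishing whether the target formula is principal in $R$ (and, for the deep-inference items, whether the target $\vvee$-occurrence coincides with the principal subformula), handling the implicit-weakening context of $\mathsf{R}\land$/$\mathsf{L}\vee$ separately, and using the classicality of $\Xi$ to rule out $R=\mathsf{L}\vvee$ in the $\mathsf{R}\vvee$ item. One small remark: your claim that simultaneous induction is \emph{needed} is stronger than necessary---the paper in fact runs each item as its own induction (the IH invoked within an item is always for that same item)---but running them simultaneously does no harm.
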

\begin{proof} 
    We say that the displayed formula on the sequent on the left of each item is the active formula of the inversion, and those on the right are its principal formulas; the principal/active subformulas of the inversion are defined in a similar way. Each item is proved by induction on $n$. For each item, if $n=1$, the derivation $\mathcal{D}$ witnessing the sequent on the left of the item consists of a single axiom, with the active formula of the inversion being a side formula of the axiom. In all cases, the sequent(s) on the right then follow by the same axiom. We now assume that each item holds for $n$ and prove that it holds for $n+1$. We separate subcases for each item as follows (note that not all subcases exist for each item):

    Case 1: The active formula of the inversion (e.g., the formula $\lnot \alpha$ in the item $\mathsf{L}\lnot$) is not principal in the final rule $R$ applied in $\mathcal{D}$.

    Case 1.1: The active formula of the inversion is in the left/right context of each premise of $R$. In this case, the result follows by the induction hypothesis applied to the (subderivation(s) ending in the) premise(s) of $R$ followed by application(s) of $R$. For instance, for the item $\mathsf{R}\land $, if $R$ is also $\mathsf{R}\land $, it is, in this case, of the form:
                \[ 
     \AxiomC{$\Ga\seq \eta,\phi\land\psi,\Lambda'$}
     \AxiomC{$\Ga\seq \nu,\phi\land\psi,\Lambda'$}
 \RightLabel{{\footnotesize $\mathsf{R}\land$},}
 \BinaryInfC{$\Ga\seq \eta\land \nu,\phi \land\psi,\Lambda',\De'$}
 \DisplayProof
 \]
  where $\phi\land\psi,\Lambda'=\Lambda$ is the right context of each premise of $R$. Applying the induction hypothesis to the (subderivations ending in the) premises yields $\vdash^{n} \Ga\seq \eta,\phi ,\Lambda'$; $\vdash^{n} \Ga\seq \eta,\psi ,\Lambda'$; $\vdash^{n} \Ga\seq \nu,\phi ,\Lambda'$; and $\vdash^{n} \Ga\seq \nu,\psi ,\Lambda'$. Then two applications of $\mathsf{R}\land$ yield $\vdash^{n+1} \Ga\seq \eta\land \nu,\phi ,\Lambda',\De'$ and $\vdash^{n+1} \Ga\seq \eta\land \nu,\psi ,\Lambda',\De'$.

  Case 1.2: The active formula of the inversion is introduced by the implicit weakening in $R$. In this case, the right context of the conclusion of $R$ is of the form $\Lambda,\phi_a,\De'$, where $\phi_a$ is the active formula of the inversion, and the right context of each premise is $\Lambda$. The result follows by an application of $R$ to the premises with a conclusion right context of the form $\Lambda,\phi_p,\De'$, where $\phi_p$ is a principal formula of the inversion. For instance, for the item $\mathsf{R}\land $, if $R$ is also $\mathsf{R}\land $, it is, in this case, of the form:
    \[ 
     \AxiomC{$\Ga\seq \eta,\Lambda$}
     \AxiomC{$\Ga \seq \nu ,\Lambda$}
 \RightLabel{{\footnotesize $\mathsf{R}\land$}}
 \BinaryInfC{$\Ga\seq \eta\land \nu,\Lambda,\phi \land\psi,\De'$}
 \DisplayProof
 \]
By applying $\mathsf{R}\land$ to the premises, we get $\vdash^{n+1}\Ga\seq \eta\land \nu,\Lambda,\phi,\De' $ and  $\vdash^{n+1}\Ga\seq \eta\land \nu,\Lambda,\psi,\De' $.

Case 2: The active formula of the inversion is principal in $R$.

Case 2.1: The active subformula of the inversion is the principal subformula of $R$. In this case, the (subderivation(s) ending in the) premise(s) of $R$ already yield the desired result (if $R$ is $\mathsf{R}\land $ or $\mathsf{L}\vee$, right weakening must also be applied to the premises). For instance, for the item $\mathsf{R}\land $, $R$ must in this case also be $\mathsf{R}\land $, and it is of the form:
        \[ 
     \AxiomC{$\Ga\seq\phi, \Lambda$}
     \AxiomC{$\Ga\seq\psi, \Lambda$}
 \RightLabel{{\footnotesize $\mathsf{R}\land$}}
 \BinaryInfC{$\Ga\seq \phi\land \psi,\Lambda,\De'$}
 \DisplayProof 
\]
We get $\vdash^{n}\Gamma \seq \phi,\Lambda,\De'$ and $\vdash^{n}\Gamma \seq \psi,\Lambda,\De'$ (whence also $\vdash^{n+1}\Gamma \seq \phi,\Lambda,\De'$ and $\vdash^{n+1}\Gamma \seq \psi,\Lambda,\De'$) by applying height-preserving right weakening (Lemma \ref{di:lemma:weakening}) to the premises.

Case 2.2:  The active subformula of the inversion is not the principal subformula of $R$. In this case, the result follows by the induction hypothesis applied to the premises of $R$ followed by application(s) of $R$. For instance, for the item $\mathsf{L}\vee $, $R$ must in this case be $\mathsf{L}\vvee $, and we may assume without loss of generality that it is of the form:
       \[
     \AxiomC{$\Ga,\phi\{\phi_L\} \vee\psi\seq \De$}
     \AxiomC{$\Ga,\phi\{\phi_R\} \vee\psi\seq \De$}
 \RightLabel{{\footnotesize $\mathsf{L}\vvee$}}
 \BinaryInfC{$\Ga,\phi\{\phi_L\vvee\phi_R\} \vee\psi\seq \De$}
 \DisplayProof 
\]
Applying the induction hypothesis to the premises yields $\vdash^{n} \Ga,\phi\{\phi_L\} \seq \De$; $\vdash^{n} \Ga,\psi \seq \De$; and $\vdash^{n} \Ga,\phi\{\phi_R\} \seq \De$. Then we already have $\vdash^{n+1} \Ga,\psi \seq \De$, and an application of $\mathsf{L}\vvee$ yields $\vdash^{n+1} \Ga,\phi\{\phi_L\vvee\phi_R\} \seq \De$.

For the items $\mathsf{L}\vvee$ and $\mathsf{R}\vvee$, the subcases of Case 2.2 in which $R$ is one of the deep-inference rules follow essentially as above, but this is less easy to see due to the complicated structure of the formulas involved, and there are some minor additional complications. We now provide some further details on these subcases. In each of these subcases, the active formula of the inversion and the principal formula of $R$ is the same formula $\chi$, and the active subformula $\phi_L\vvee\phi_R$ of the inversion is not the principal subformula $\nu_L\vvee \nu_R$ of $R$. The formula $\chi$ is then of one of the following forms:  (i) $\chi\{\phi_L\vvee\phi_R\}\{\nu_L\vvee\nu_R\}$; (ii) $\chi\{(\phi_L\vvee\phi_R)\{\nu_L\vvee\nu_R\}\}$; or (iii) $\chi\{(\nu_L\vvee\nu_R)\{\phi_L\vvee\phi_R\}\}$.

Case 2.2 of item $\mathsf{L}\vvee$; $R$ is $\mathsf{L}\vvee$. In all of the subcases (i), (ii), (iii), the result follows essentially, as above, by applying the induction hypothesis to the premises and then applying $R$. We provide the details for (iii). We may assume without loss of generality that $\chi\{(\nu_L\vvee\nu_R)\{\phi_L\vvee\phi_R\}\}=\chi\{\nu_L\{\phi_L\vvee\phi_R\}\vvee\nu_R\}$. Then $R$ is of the form:  
\[ 
        \AxiomC{$\Ga ,\chi\{\nu_L\{\phi_L\vvee\phi_R\}\}\seq \De$}
            \AxiomC{$\Ga,\chi\{\nu_R\} \seq \De$}
            \RightLabel{{\footnotesize $\mathsf{L}\vvee$}}
            \BinaryInfC{$\Ga, \chi\{\nu_L\{\phi_L\vvee\phi_R\}\vvee\nu_R\} \seq \De$}
            \DisplayProof 
            \]
             By the induction hypothesis applied to the left premise, $\vdash^{n}\Ga ,\chi\{\nu_L\{\phi_L\}\}\seq \De$ and $\vdash^{n}\Ga ,\chi\{\nu_L\{\phi_R\}\}\seq \De$, and from the right premise we have $\vdash^{n}\Ga ,\chi\{\nu_R\}\seq \De$. Then by $\mathsf{L}\vvee$, we have $\vdash^{n+1}\Ga ,\chi\{\nu_L\{\phi_L\}\vvee\nu_R\}\seq \De$ and $\vdash^{n+1}\Ga ,\chi\{\nu_L\{\phi_R\}\vvee\nu_R\}\seq \De$, as required.
  
Case 2.2 of item $\mathsf{R}\vvee$; $R$ is $\mathsf{R}\vvee$. In all of the subcases (i), (ii), (iii), the result follows essentially, as above, by applying the induction hypothesis to the premises and then applying $R$. We provide the details for (iii). We may assume without loss of generality that $\chi\{(\nu_L\vvee\nu_R)\{\phi_L\vvee\phi_R\}\}=\chi\{\nu_L\{\phi_L\vvee\phi_R\}\vvee\nu_R\}$. There are two subcases. In the first, $R$ is of the form:
   \[ 
     \AxiomC{$\Xi \seq\chi\{\nu_L\{\phi_L\vvee\phi_R\}\},\De$}
 \RightLabel{{\footnotesize $\mathsf{R}\vvee$}}
 \UnaryInfC{$\Xi\seq\chi\{\nu_L\{\phi_L\vvee\phi_R\}\vvee\nu_R\}, \De$}
 \DisplayProof 
\]
 By the induction hypothesis applied to the premise, we have $\vdash^{n} \Xi \seq\chi\{\nu_L\{\phi_L\}\},\De$ or $\vdash^{n} \Xi \seq\chi\{\nu_L\{\phi_R\}\},\De$. If the former, then $\vdash^{n+1} \Xi \seq\chi\{\nu_L\{\phi_L\}\vvee\nu_R\},\De$ by $\mathsf{R}\vvee$; if the latter, then $\vdash^{n+1} \Xi \seq\chi\{\nu_L\{\phi_R\}\vvee\nu_R\},\De$ by $\mathsf{R}\vvee$.

 In the second subcase, $R$ is of the form
                       \[ 
     \AxiomC{$\Xi \seq\chi\{\nu_R\},\De$}
 \RightLabel{{\footnotesize $\mathsf{R}\vvee$}}
 \UnaryInfC{$\Xi\seq\chi\{\nu_L\{\phi_L\vvee\phi_R\}\vvee\nu_R\}, \De$}
 \DisplayProof 
\]
Then by $\mathsf{R}\vvee$ applied to the premise, $\vdash^{n+1} \Xi\seq \chi\{\nu_L\{\phi_L\}\vvee\nu_R\}, \De$.
\end{proof}

Let us make a few additional remarks on why the induction in the above proof goes through for item $\mathsf{R}\vvee$ in the cases in which $R$ has two premises. Note first that $R$ cannot be $\mathsf{L}\vvee$ since the antecedent of the conclusion of $\mathsf{L}\vvee$ is always nonclassical, whereas the antecedent of the premise of inverted $\mathsf{R}\vvee$---as per the restriction prescribed in the lemma---must be classical. So $R$ must be $\mathsf{R}\land $ or $\mathsf{L}\vee$. If the active formula $\chi\{\phi_L\vvee\phi_R\}$ of the inversion is a side formula of $R$, it must then have been introduced via the implicit weakening in $R$, and we proceed as in Case 1.2 (instead of introducing $\chi\{\phi_L\vvee\phi_R\}$, we introduce  $\chi\{\phi_L\}$ or  $\chi\{\phi_R\}$). If, on the other hand, $\chi\{\phi_L\vvee\phi_R\}$ is an active formula of $R$, $R$ must be $\mathsf{R}\land$. We may assume without loss of generality that $\chi\{\phi_L\vvee\phi_R\}=\chi_1\{\phi_L\vvee\phi_R\}\land \chi_2$ and that $R$ is of the form:
 \[ 
     \AxiomC{$\Xi \seq\chi_1\{\phi_L\vvee\phi_R\},\Lambda$}
     \AxiomC{$\Xi \seq\chi_2,\Lambda$}
 \RightLabel{{\footnotesize $\mathsf{R}\land$},}
 \BinaryInfC{$\Xi \seq\chi_1\{\phi_L\vvee\phi_R\}\land \chi_2,\Lambda,\De'$}
 \DisplayProof 
\]
We proceed essentially as in Case 2.2, by first applying the induction hypothesis to the left premise and then applying $R$.

\begin{lemma}[Height-preserving contraction] \label{di:lemma:contraction}
Left contraction is height-preserving admissible, and right contraction is height-preserving admissible with respect to classical formulas:
\begin{itemize}
    \item[-] Left contraction: $\vdash^n\Gamma,\nu,\nu\seq \Delta$ implies $\vdash^n\Gamma,\nu\seq \Delta$.
    \item[-] Right contraction: $\vdash^n\Gamma\seq \alpha,\alpha, \Delta$ implies $\vdash^n\Gamma\seq \alpha,\Delta$.
\end{itemize}
\end{lemma}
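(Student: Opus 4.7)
The plan is to prove both contraction statements by a single induction on $n$; this simultaneity is essential because inversion for the negation rules (Lemma \ref{di:lemma:inversion}) swaps sides, so a principal $\mathsf{L}\lnot$-step in a left-contraction argument reduces to a right contraction on the classical subformula $\beta$, and symmetrically for $\mathsf{R}\lnot$. Height-preserving inversion does essentially all of the work; the restriction of right contraction to classical $\alpha$ ensures that I never need to invoke the disjunctive ``or'' form of $\mathsf{R}\vvee$-inversion inside a contraction argument, since $\mathsf{R}\vvee$ cannot have a classical principal formula and the other left-rules have their inversions stated as genuine implications.

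The base case $n=1$ is immediate: if $\Gamma,\nu,\nu\seq\Delta$ or $\Gamma\seq\alpha,\alpha,\Delta$ is an instance of $\mathsf{At}$ or $\mathsf{L}\bot$, then the axiom-defining side formulas ($p$ on both sides, or $\bot$ on the left) persist after one duplicate is deleted, so the contracted sequent is still an axiom. For the inductive step I split on whether the contracted occurrence is principal in the final rule $R$. In the non-principal case I apply the induction hypothesis to each premise to contract the duplicate that is inherited into the premises, and then reapply $R$; the classicality constraints on $\alpha$ and on the right contexts $\Lambda$ of $\mathsf{R}\land$ and $\mathsf{L}\vee$ are preserved by this. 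The sub-case in which a duplicate was brought in only by the implicit right-weakening of $\mathsf{R}\land$ or $\mathsf{L}\vee$ is easier still: I simply reapply the same rule with a shrunken weakening multiset.

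The principal case uses the standard ``invert, contract inductively, reapply'' pattern. For left contraction with $\nu$ principal in $R$, I invert the side copy of $\nu$ in each premise by Lemma \ref{di:lemma:inversion}, producing height-$n$ derivations in which both copies have been replaced by the immediate active subformulas of $R$ (possibly on the opposite side of the turnstile, as in the $\mathsf{L}\lnot$ case, where I then appeal to right contraction on $\beta$), and reapply $R$ after contracting by the induction hypothesis. The most involved instance is $\nu=\chi\{\phi_L\vvee\phi_R\}$ principal in $\mathsf{L}\vvee$: from the two premises---each of which still carries $\nu$ as a side formula---a second round of $\mathsf{L}\vvee$-inversion yields four height-$n$ sequents, from which I extract the two ``matched'' ones $\Gamma,\chi\{\phi_L\},\chi\{\phi_L\}\seq\Delta$ and $\Gamma,\chi\{\phi_R\},\chi\{\phi_R\}\seq\Delta$, contract each via the induction hypothesis, and reassemble by $\mathsf{L}\vvee$. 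Right contraction with principal $\alpha=\beta\land\gamma$ in $\mathsf{R}\land$ (and dually $\alpha=\beta\lor\gamma$ in nothing, so really only $\mathsf{R}\lnot$, $\mathsf{R}\land$, $\mathsf{R}\lor$, $\mathsf{At}$) needs a further split on whether the side copy of $\alpha$ sits in the classical shared context $\Lambda$ or in the implicit-weakening multiset $\Delta$: the first subcase inverts and then contracts on the classical subformulas $\beta$ and $\gamma$, the second just reapplies the rule with a shorter implicit weakening.

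The main obstacle I anticipate is bookkeeping in the $\mathsf{R}\land$/$\mathsf{L}\vee$ principal cases, where implicit weakening, the classicality of $\Lambda$, and the pairing of inverted premises must all be tracked in parallel; what makes the induction close is precisely the interplay between the two statements---whenever inversion forces a cross-call from one contraction to the other, the formula being contracted remains classical (it is the immediate active subformula of a negation rule applied to a classical formula), so the right-contraction hypothesis is applicable exactly where needed.
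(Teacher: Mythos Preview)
Your proposal is correct and follows essentially the same approach as the paper's proof: simultaneous induction on $n$, the same principal/non-principal case split, the same use of height-preserving inversion in the principal case (including the cross-call between the two contraction statements at $\mathsf{L}\lnot$/$\mathsf{R}\lnot$), and the same handling of the implicit-weakening subcase for $\mathsf{R}\land$/$\mathsf{L}\vee$. Your treatment of the $\mathsf{L}\vvee$ principal case and the observation that $\mathsf{R}\vvee$ cannot be principal for a classical $\alpha$ also match the paper exactly.
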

\begin{proof}
By simultaneous induction on $n$. For both items, if $n=1$, the derivation $\mathcal{D}$ witnessing the sequent on the left of the item consists of a single axiom, and then the sequent on the right of the item follows by the same axiom. We now assume that both items hold for $n$ and prove that they hold for $n+1$. Let $R$ denote the final rule applied in $\mathcal{D}$. We separate subcases for each item as follows: 

Case 1: The formula (occurrence) to be contracted is not the principal formula of $R$.

Case 1.1: Both of the formula occurrences $\nu$/$\alpha$ to be contracted are in the left/right context of each premise of $R$ (that is, neither occurrence is introduced via implicit weakening). In this case, the result follows by the induction hypothesis applied to the (subderivation(s) ending in the) premise(s) of $R$ followed by an application of $R$. For instance, for right contraction, if $R$ is $\mathsf{L}\lnot$, it is of the form:
\[
 \AxiomC{$\Ga'\seq \beta,\alpha,\alpha,\De$}
 \RightLabel{{\footnotesize $\mathsf{L}\lnot$}}
 \UnaryInfC{$\Ga',\lnot \beta \seq \alpha,\alpha,\De$}
 \DisplayProof 
\]
By the induction hypothesis applied to the premise followed by an application of $\mathsf{L}\lnot$, we have $\vdash^{n+1}\Ga',\lnot \beta\seq \alpha,\De$.

Case 1.2: At least one of the formula occurrences $\alpha$ to be contracted is introduced by the implicit weakening in $R$. In this case, we replace the application of $R$ by another application in which one fewer occurrence is introduced. For instance, if $R$ is $\mathsf{R}\land$, it is of one of the following forms:
\[ 
 \AxiomC{$\Ga\seq \phi, \Lambda$}
 \AxiomC{$\Ga\seq \psi, \Lambda$}
 \RightLabel{{\footnotesize $\mathsf{R}\land$}}
 \BinaryInfC{$\Ga \seq \alpha,\alpha,\phi\land\psi,\Lambda,\De'$}
 \DisplayProof \hspace{0.5cm} \AxiomC{$\Ga\seq \alpha, \phi, \Lambda$}
 \AxiomC{$\Ga\seq \alpha,\psi, \Lambda$}
 \RightLabel{{\footnotesize $\mathsf{R}\land$}}
 \BinaryInfC{$\Ga \seq \alpha,\alpha,\phi\land\psi,\Lambda,\De'$}
 \DisplayProof 
\]
In either case, by $\mathsf{R}\land$ applied to the premises, $\vdash^{n+1} \Ga \seq \alpha,\phi\land\psi,\Lambda,\De'$.

Case 2: The formula occurrence to be contracted is the principal formula of $R$. In this case, the result follows by application(s) of the inversion lemma to the (subderivations ending in the) premise(s), followed by application(s) of the induction hypothesis (for one of the two items), followed by an application of $R$. We give three examples.

For left contraction, if $R$ is $\mathsf{L}\lnot$, we have that $\nu=\lnot \beta$ and $R$ is of the form:
 \[
 \AxiomC{$\Ga,\lnot \beta \seq \beta,\De$}
 \RightLabel{{\footnotesize $\mathsf{L}\lnot$}}
 \UnaryInfC{$\Ga,\lnot \beta,\lnot \beta \seq \De$}
 \DisplayProof 
\]
By the $\mathsf{L}\lnot$-case of Lemma \ref{di:lemma:inversion} applied to the premise, $\vdash^{n}\Ga\seq \beta,\beta,\De$. Then by the induction hypothesis (for right contraction), we have
$\vdash^{n}\Ga\seq \beta, \De$, whence by $\mathsf{L}\lnot$, we have $\vdash^{n+1}\Ga,\lnot \beta \seq \De$.

For right contraction, if $R$ is $\mathsf{R}\vee$, we have that $\alpha=\beta_1\vee\beta_2$ and $R$ is of the form:
\[
 \AxiomC{$\Ga\seq \beta_1,\beta_2,\beta_1\vee\beta_2, \De$}
 \RightLabel{{\footnotesize $\mathsf{R}\vee$}}
 \UnaryInfC{$\Ga \seq \beta_1\vee\beta_2,\beta_1 \vee \beta_2,\De$}
 \DisplayProof 
\]
 By the $\mathsf{R}\vee$-case of Lemma \ref{di:lemma:inversion} applied to the premise, $\vdash^{n}\Ga \seq \beta_1,\beta_2,\beta_1,\beta_2,\De$. Then, by applying the induction hypothesis twice, we obtain first $\vdash^{n}\Ga \seq \beta_1,\beta_1,\beta_2,\De$ and then $\vdash^{n}\Ga \seq \beta_1,\beta_2,\De$, whence by $\mathsf{R}\vee$, we have $\vdash^{n+1}\Ga \seq \beta_1\vee\beta_2,\De$.

For left contraction, if $R$ is $\mathsf{L}\vvee$, we have that $\nu=\chi\{\phi_L\vvee\phi_R\}$ and $R$ is of the form:
 \[
 \AxiomC{$\Ga,\chi\{\phi_L\vvee\phi_R\},\chi\{\phi_L\}\seq \De$}
 \AxiomC{$\Ga,\chi\{\phi_L\vvee\phi_R\},\chi\{\phi_R\}\seq \De$}
 \RightLabel{{\footnotesize $\mathsf{L}\vvee$}}
 \BinaryInfC{$\Ga,\chi\{\phi_L\vvee\phi_R\},\chi\{\phi_L\vvee\phi_R\} \seq \De$}
 \DisplayProof 
 \]
By the $\mathsf{L}\vvee$-case of Lemma \ref{di:lemma:inversion} applied to the premises, $\vdash^{n}\Ga,\chi\{\phi_L\},\chi\{\phi_L\}\seq \De$ and $\vdash^{n}\Ga\seq\chi\{\phi_R\},\chi\{\phi_R\} \seq \De$. Then by the induction hypothesis, $\vdash^{n}\Ga,\chi\{\phi_L\} \seq \De$ and $\vdash^{n}\Ga,\chi\{\phi_R\} \seq \De$, whence by $\mathsf{L}\vvee$, we have $\vdash^{n+1}\Ga,\chi\{\phi_L\vvee \phi_R\} \seq \De$. 

Note that for right contraction, if $R$ is $\mathsf{R}\vvee$, Case 2 does not apply since the principal formula of $R$ is not classical and hence cannot be the target formula of the contraction.
\end{proof}

\section{Countermodel Construction}\label{di:section:countermodels}

Due to the (semantic) invertibility of the rules, together with the split property (Proposition \ref{di:prop:split}), there is a simple way of proving the cutfree completeness of $\mathsf{GT}$ that is similar to one for $\mathsf{G3cp}$ \cite[pp. 104--105]{troelstra}: roughly, given a sequent $S$, one applies inverted versions of the rules until one arrives at \emph{atomic sequents}---sequents consisting solely of propositional variables and $\bot$. Given semantic invertibility and split, if $S$ is valid, so are (some of) these atomic sequents, whence they are axioms; one will then have constructed a derivation of $S$ consisting of these axioms and the rules of $\mathsf{GT}$ corresponding to the inverted rules applied in the process. Conversely, if $S$ is invalid, some of these atomic sequents must also be invalid, and these invalid atomic sequents then enable one to construct a countermodel for $S$.

The following lemma yields an effective procedure for constructing a countermodel to an invalid sequent using countermodels to atomic sequents derived from the original invalid sequent via inverted rules---it is the (constructive) semantic counterpart to the inversion lemma (Lemma \ref{di:lemma:inversion}). Observe that---in accordance with the split property---the step in the procedure corresponding to inverted $\mathsf{R}\vvee$ requires that the antecedent of the relevant sequent be classical. The steps corresponding to the inverted versions of the restricted-context rules $\mathsf{R}\land$ and $\mathsf{L}\vee$ omit the implicit weakening of the original rules and work for any succedent context (whether classical or non-classical). We call a team $t\in 2^\mathsf{X}$ such that $t\models \bigwedge\Pi$ and $t\not \models \bigvee \Sigma$ a \emph{countermodel to $\Pi\seq \Sigma$ on $\mathsf{X}$.}

\begin{lemma} Given a sequent $\Pi\seq \Sigma$ and any $\mathsf{X}\subseteq \mathsf{Prop}$ such that $\mathsf{P}(\Pi)\cup \mathsf{P}(\Sigma)\subseteq \mathsf{X}$,\label{di:lemma:countermodels}\
\begin{itemize}
    \item[-] if $\Pi\seq \Sigma$ is atomic, there is an effective procedure for constructing a countermodel to $\Pi\seq \Sigma$ on $\mathsf{X}$;
    \item[-] if $\Pi\seq \Sigma$ is of the form of the  conclusion of any logical rule application (apart from $\mathsf{R}\land$, $\mathsf{L}\vee$, or $\mathsf{R}\vvee$), then there is an effective procedure for constructing a countermodel to $\Pi\seq \Sigma$ on $\mathsf{X}$ from a countermodel on $\mathsf{X}$ to any premise of that application;
    \item[-] $\mathsf{R}\land$: if $\Pi\seq \Sigma$ is of the form $\Gamma\seq \phi\land \psi,\De$, then there is an effective procedure for constructing a countermodel to $\Pi\seq \Sigma$ on $\mathsf{X}$ from a countermodel on $\mathsf{X}$ to either $\Gamma\seq \phi,\De$ or $\Gamma\seq \psi,\De$;
    \item[-] $\mathsf{L}\vee$: if $\Pi\seq \Sigma$ is of the form $\Gamma, \phi\vee\psi\seq \De$, then there is an effective procedure for constructing a countermodel to $\Pi\seq \Sigma$ on $\mathsf{X}$ from a countermodel on $\mathsf{X}$ to either $\Gamma,\phi\seq \De$ or $\Gamma,\psi\seq \De$;
    \item[-] $\mathsf{R}\vvee$: if $\Pi\seq \Sigma$ is of the form $\Xi\seq \chi\{\psi_L\vvee\psi_R\}$, then there is an effective procedure for constructing a countermodel to $\Pi\seq \Sigma$ on $\mathsf{X}$ given countermodels on $\mathsf{X}$  to both $\Xi\seq \chi\{\psi_L\},\De$ and $\Xi\seq \chi\{\psi_R\},\De$.
\end{itemize}
\end{lemma}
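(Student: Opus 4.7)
The plan is to treat each item separately, case-by-case on the form of $\Pi \seq \Sigma$, constructing the output countermodel either as the same team as the input countermodel(s), as a simple subteam, or as the union of two. Each case semantically mirrors the corresponding case of the inversion lemma (Lemma \ref{di:lemma:inversion}) and reuses the equivalences and closure properties already invoked in the soundness proof (Theorem \ref{di:theorem:GT_soundness})---in particular, the flatness of classical formulas, the empty team and downward closure properties of all $\PLVVEE$-formulas, the union closure of classical formulas, and the distributivity equivalence $\chi\{\psi_L \vvee \psi_R\} \equiv \chi\{\psi_L\} \vvee \chi\{\psi_R\}$ coming from distributivity of $\land$, $\vee$, and $\vvee$ over $\vvee$. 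Effectivity is immediate throughout, since every construction is a straightforward set-theoretic operation on the inputs.

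For the atomic base case, I first observe that it is decidable in linear time whether $\Pi \seq \Sigma$ is an axiom; if it is, there is no countermodel, and otherwise $\bot \notin \Pi$ and $\Pi \cap \Sigma$ contains no propositional variable. In the latter case I would take $t := \{v\}$, where $v \in 2^\mathsf{X}$ is the valuation defined by $v(p) = 1$ iff $p \in \Pi$. Then $t \models \bigwedge \Pi$ directly. To see that $t \not\models \bigvee \Sigma$, note that every element of $\Sigma$ is either $\bot$ or a variable $p$ with $v(p) = 0$, and the only subteam of the singleton $\{v\}$ satisfying such a formula is $\emptyset$; hence any split of $\{v\}$ witnessing satisfaction of $\bigvee \Sigma$ would consist entirely of empty subteams, whose union cannot be $\{v\}$.

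The rule cases away from $\mathsf{R}\vvee$ are largely routine: for $\mathsf{L}\land$, $\mathsf{R}\neg$, $\mathsf{R}\vee$, $\mathsf{L}\vvee$, $\mathsf{R}\land$, and $\mathsf{L}\vee$, the same team $t$ that witnesses a given premise already witnesses the conclusion, essentially by running the corresponding soundness argument in reverse. For instance, in the $\mathsf{L}\vee$ case I would use $t = t \cup \emptyset$ together with the empty team property to infer $t \models \phi \vee \psi$ from $t \models \phi$; and in the $\mathsf{R}\land$ case I would argue that if $t$ satisfied $(\phi \land \psi) \vee \bigvee \De$ via a split $t = s \cup u$, then $s \models \phi$, so $t \models \phi \vee \bigvee \De$, contradicting the hypothesis on the premise. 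The only nontrivial modification is for $\mathsf{L}\neg$, where I would take $t' := \{v \in t : v \not\models \alpha\}$: downward closure gives $t' \models \Gamma$, flatness of $\lnot \alpha$ gives $t' \models \lnot \alpha$, and the observation that $t \setminus t'$ satisfies $\alpha$ by flatness lets one derive a contradiction from $t' \models \bigvee \De$ together with $t = (t \setminus t') \cup t'$.

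The main obstacle is the $\mathsf{R}\vvee$ case, which corresponds semantically to the split property (Proposition \ref{di:prop:split})---this is precisely what forces the classical-antecedent restriction---and, unlike the other cases, genuinely combines both given countermodels. I would set $t := t_L \cup t_R$. Since $\Xi$ consists of classical, and hence union-closed, formulas, $t \models \Xi$. To see that $t$ refutes $\chi\{\psi_L \vvee \psi_R\} \vee \bigvee \De$, I would invoke the distributivity equivalence
\[ \chi\{\psi_L \vvee \psi_R\} \vee \bigvee \De \;\equiv\; (\chi\{\psi_L\} \vee \bigvee \De) \vvee (\chi\{\psi_R\} \vee \bigvee \De) \]
together with the downward closure of $\PLVVEE$-formulas: if $t$ satisfied a disjunct $\chi\{\psi_i\} \vee \bigvee \De$, then the subteam $t_i \subseteq t$ would satisfy it too, contradicting the hypothesis on $t_i$. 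Hence $t$ refutes both disjuncts and therefore, by the displayed equivalence, the whole succedent formula.
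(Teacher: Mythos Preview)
Your proposal is correct and follows essentially the same approach as the paper: the same constructions (singleton $\{v\}$ for the atomic case, the subteam $\{v\in t: v\not\models\alpha\}$ for $\mathsf{L}\lnot$, the union $t_L\cup t_R$ for $\mathsf{R}\vvee$, and the input team unchanged in all remaining cases) appear in both, justified by the same closure properties and the distributivity equivalence. The only differences are presentational---you group the ``same team'' cases together and make the axiom/non-axiom case split in the atomic clause explicit, whereas the paper omits several easy cases and tacitly assumes invalidity there.
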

\begin{proof} We omit some easy cases in which a countermodel to the premise of a rule is also a countermodel to its conclusion.
       \begin{itemize}
       \item[-] Atomic sequent: If $\Pi \seq \Sigma$ is an invalid atomic sequent, define $v\in 2^{\mathsf{X}}$ by $v(p)=1$ iff $p\in \Pi$. We show that $\{v\}$ is a countermodel to $\Pi \seq \Sigma$. Clearly $\{v\}\models \bigwedge \Pi $. Since the atomic sequent $\Pi \seq \Sigma$ is invalid, $\Pi$ and $\Sigma$ must be disjoint, whence $v\not\models \bigvee \Sigma $, so that also $\{v\}\not\models \bigvee \Sigma $. (Note that in the special case in which $\mathsf{X}=\emptyset$, $v$ is the empty valuation $\emptyset$, and the countermodel $\{v\}$ is the nonempty team $\{\emptyset\}$ containing the empty valuation.)
        \item[-] $\mathsf{L}\lnot$: Assume $t$ is a countermodel to the premise of the rule $\mathsf{L}\lnot$, i.e., $t\models \bigwedge \Ga$ and $t\nmodels \alpha\vee \bigvee \De$. Let $s=\{v\in t\mid v\not\models\alpha\}\subseteq t$. We show that $s$ is a countermodel to the conclusion of the rule. Clearly, $s\models\neg\alpha$, and  by downward closure, $s\models \bigwedge \Ga$. Finally, to see $s\not\models \bigvee \De$, assume otherwise. Since $t\setminus s\models\alpha$  holds, we have $(t\setminus s)\cup s=t\models\alpha\vee \bigvee \De$, which is a contradiction.
        \item[-]  $\mathsf{R}\lnot$: Assume $t\models \bigwedge \Ga \land \alpha$ and $t\nmodels \bigvee \De$. We show that $t$ is a countermodel for the conclusion of the rule, that is, $t\not\models \lnot\alpha\vee\bigvee \De$. Since $t\models \alpha$, no nonempty subset of $t$ satisfies $\neg\alpha$. Thus, if $t\models \lnot\alpha\vee\bigvee \De$ were to hold, it must be that $t\models \bigvee \De$, which contradicts the assumption.

        \item[-]  $\mathsf{L}\vee$: Assume $t\models \bigwedge \Ga\land \phi$ and $t\nmodels \bigvee \De$. Clearly, $t$ is our countermodel, since $t\models \bigwedge \Ga\land (\phi\vee\psi)$ by the empty team property. Similarly for the other premise.
        \item[-]  $\mathsf{L}\vvee$: Assume $t\models \bigwedge \Ga\land \chi\{\phi_L\}$ and $t\nmodels \bigvee \De$. The team $t$ is our countermodel: clearly $t\models \bigwedge \Ga\land \chi\{\phi_L\}\vvee \chi\{\phi_R\}$, whence $t\models \bigwedge \Ga\land \chi\{\phi_L\vvee\phi_R\}$ since $\vee $, $\land$, and $\vvee$ distribute over $\vvee$.
        \item[-]  $\mathsf{R}\vvee$: Assume $t_1\models \bigwedge \Xi$ and $t_1\nmodels \chi\{\phi_L\}\vee\bigvee\De$ and $t_2\models \bigwedge \Xi$ and $t_2\nmodels \chi\{\phi_R\}\vee\bigvee\De$. The team $t_1\cup t_2$ is our countermodel: by the union closure of $\bigwedge \Xi$, we have $t_1\cup t_2\models \bigwedge \Xi$. By downward closure, $t_1\cup t_2\nmodels \chi\{\phi_L\}\vee\bigvee\De$ and $t_1\cup t_2\nmodels \chi\{\phi_R\}\vee\bigvee\De$ so $t_1\cup t_2\nmodels \chi\{\phi_L\vvee\phi_R\}\vee\bigvee\De$ since $\vee $, $\land$, and $\vvee$ distribute over $\vvee$.
        \qedhere
    \end{itemize}
\end{proof}

We introduce some metalanguage notation for convenience. We use $\doublewedge$ as a metalanguage `and', and $\doublevee$ as a metalanguage `or'. We call a collection of sequents joined together by the metalanguage connectives a \emph{Boolean combinations of sequents}. For instance, $(\Ga \seq \phi,\Lambda \mathrel{\doublewedge} \Ga \seq \psi,\Lambda)\doublevee \Ga, \chi \seq \Lambda$ is a Boolean combination of sequents (we also allow combinations that contain parentheses that govern the order of operations). We will use these combinations to express derivability and semantic invertibility facts in the following manner:
\begin{description}
    \item[($\mathsf{R}\land $)] $(\Gamma\seq \phi,\Lambda \mathrel{\doublewedge} \Gamma\seq \psi,\Lambda) \leftmodels\vdash_{\scriptsize \mathsf{\mathsf{R}\land}} \Gamma\seq \phi\land \psi, \Lambda$
    \item[($\mathsf{R}\vvee$)] $(\Xi\seq\chi\{\phi_L\},\De \mathrel{\doublevee}  \Xi\seq\chi\{\phi_R\},\De)\leftmodels\vdash_{\scriptsize \mathsf{\mathsf{R}\vvee}} \Xi\seq \chi\{\phi_L\vvee \phi_R\},\De$
\end{description}
The left-to-right direction of the first line above reformulates the rule $\mathsf{R}\land$ (without implicit weakening): given both of the sequents $\Gamma\seq \phi,\Lambda $ and $\Gamma\seq \psi,\Lambda $, one can derive the sequent $\Gamma\seq \phi\land \psi, \Lambda$ using only $\mathsf{R}\land$. The right-to-left direction expresses the semantic invertibility of $\mathsf{R}\land$ (restricted to classical right contexts): if $\Gamma\seq \phi\land \psi, \Lambda$ is valid, then both $\Gamma\seq \phi,\Lambda $ and $\Gamma\seq \psi,\Lambda $ are valid. Similarly, the second line expresses that one can derive $ \Xi\seq \chi\{\phi_L\vvee \phi_R\},\De$ via $\mathsf{R}\vvee$ using either $\Xi\seq\chi\{\phi_L\},\De$ or $\Xi\seq\chi\{\phi_R\},\De$, and that if $ \Xi\seq \chi\{\phi_L\vvee \phi_R\},\De$ is valid, so is either $\Xi\seq\chi\{\phi_L\},\De$ or $\Xi\seq\chi\{\phi_R\},\De$.

We say that a Boolean combination $\mathcal{S}$ of sequents is \emph{valid} if the Boolean expression given by replacing each valid sequent in $\mathcal{S}$ with $1$ and each invalid sequent with $0$ is valid. For instance, the Boolean combination $S_1 \mathrel{\doublewedge} (S_2 \mathrel{\doublevee} S_3)$ of sequents $S_1,S_2,S_3$ is valid iff $S_1 \mathrel{\doublewedge} (S_2 \mathrel{\doublevee} S_3)$ read as a Boolean expression is valid, namely, the sequent $S_1$ is valid and one of the sequents $S_2$ and $S_3$ is valid. We write $\mathcal{S}_1 \models \mathcal{S}_2$ if the validity of $\mathcal{S}_1$ implies the validity of $\mathcal{S}_2$. We say that a subcombination of $\mathcal{S}$ is a \emph{witnessing combination} for $\mathcal{S}$ if the validity of the subcombination implies the validity of $\mathcal{S}$ (e.g., $\mathcal{S}_1$ is a witnessing combination for $\mathcal{S}_1 \mathrel{\doublevee}\mathcal{S}_2$). We write $\mathcal{S}_1 \vdash_{\scriptsize \mathsf{C}} \mathcal{S}_2$ to denote the fact that there is some witnessing combination for $\mathcal{S}_1$ such that given derivation(s) in $\mathsf{GT}^-$ for the sequents in $\mathcal{S}_1$, one can extend these derivations using rules in $\mathsf{C}$ to construct derivation(s) of a witnessing combination for $\mathcal{S}_2$. We write $\mathcal{S}_1 \leftmodels\vdash_{\scriptsize \mathsf{C}} \mathcal{S}_2$ if $\mathcal{S}_1 \vdash_{\scriptsize \mathsf{C}} \mathcal{S}_2$ and $\mathcal{S}_2 \models \mathcal{S}_1$. We write $\vdash_{\mathsf{C}} \mathcal{S}$ if there are derivations for the sequents in a witnessing combination for $\mathcal{S}$ (from axioms) using rules in $\mathsf{C}$. We write simply $\vdash$ for $\vdash_{\scriptsize \mathsf{GT}^-}$. We say that a derivation $\mathcal{D}$ \emph{witnesses} $\vdash_{\scriptsize \mathsf{C}_1} \mathcal{S}_1\vdash_{\scriptsize \mathsf{C}_2}\mathcal{S}_2\vdash_{\scriptsize \mathsf{C}_3}\ldots\vdash_{\scriptsize \mathsf{C}_n}\mathcal{S}_{n}$ if $\mathcal{D}$ consists of derivations of the sequents in a witnessing combination for $\mathcal{S}_1$ using rules in $\mathsf{C}_1$, extensions of these derivations using rules in $\mathsf{C}_2$ such that the combined derivations constitute derivations of the sequents in a witnessing combination for $\mathcal{S}_2$, and so on ($\mathcal{D}$ \emph{witnessing} $ \mathcal{S}_1\vdash_{\scriptsize \mathsf{C}_2}\mathcal{S}_2\vdash_{\scriptsize \mathsf{C}_3}\ldots\vdash_{\scriptsize \mathsf{C}_n}\mathcal{S}_{n}$ is defined in a similar way).

Our cutfree completeness/countermodel construction theorem below (Theorem \ref{di:theorem:countermodels}) follows from the fact for each rule $R$ of $\mathsf{GT}^-$, there is a derivability/semantic invertibility result of the form $\mathcal{S}_1\leftmodels \vdash_R \mathsf{S}_2$, and if a result for a given rule involves context restrictions, there is always some other such result that can be applied first in order to meet these restrictions. We list the required results of this form below. Note that in the item for $\mathsf{R}\land $, the right contexts of the sequents in the relevant Boolean combinations are classical---this is to allow for a pair of combinations $\mathcal{S}_1$ and $\mathcal{S}_2$ such that $\mathcal{S}_1\leftmodels \vdash_\mathsf{R\land} \mathsf{S}_2$ (similarly for $\mathsf{L}\vee$).
\begin{description}
    \item[($\mathsf{L}\lnot $)] $\Gamma\seq \alpha,\De\leftmodels\vdash_{\scriptsize \mathsf{\mathsf{L}\lnot}} \Gamma,\lnot \alpha\seq \De$
    \item[($\mathsf{R}\lnot$)] $\Gamma,\alpha\seq \De\leftmodels\vdash_{\scriptsize \mathsf{\mathsf{R}\lnot}} \Gamma\seq \lnot\alpha,\De$
    \item[($\mathsf{L}\land $)] $\Gamma,\phi,\psi\seq \De\leftmodels\vdash_{\scriptsize \mathsf{\mathsf{L}\land}} \Gamma,\phi\land \psi\seq \De$
    \item[($\mathsf{R}\land $)] $(\Gamma\seq \phi,\Lambda \mathrel{\doublewedge} \Gamma\seq \psi,\Lambda) \leftmodels\vdash_{\scriptsize \mathsf{\mathsf{R}\land}} \Gamma\seq \phi\land \psi, \Lambda$
    \item[($\mathsf{L}\vee $)] $(\Gamma,\phi\seq \Lambda \mathrel{\doublewedge} \Gamma,\psi\seq \Lambda) \leftmodels\vdash_{\scriptsize \mathsf{\mathsf{L}\vee}} \Gamma,\phi\vee\psi\seq \Lambda$
    \item[($\mathsf{R}\vee$)] $\Gamma\seq\phi,\psi, \De\leftmodels\vdash_{\scriptsize \mathsf{\mathsf{R}\vee}} \Gamma\seq \phi\vee \psi,\De$
    \item[($\mathsf{L}\vvee $)] $(\Gamma,\chi\{\phi_L\}\seq \De \mathrel{\doublewedge} \Gamma,\chi\{\phi_L\}\seq \De) \leftmodels\vdash_{\scriptsize \mathsf{\mathsf{L}\vvee}} \Gamma,\chi\{\phi_L\vvee\phi_R\}\seq \De$
    \item[($\mathsf{R}\vvee$)] $(\Xi\seq\chi\{\phi_L\},\De \mathrel{\doublevee}  \Xi\seq\chi\{\phi_R\},\De)\leftmodels\vdash_{\scriptsize \mathsf{\mathsf{R}\vvee}} \Xi\seq \chi\{\phi_L\vvee \phi_R\},\De$
\end{description}

\begin{theorem}[Cutfree completeness and countermodel construction] \label{di:theorem:countermodels}
    There is an effective procedure that, given a sequent $\Ga\seq \De$, yields a cutfree derivation of $\Ga\seq \De$ if $\Ga\seq \De$ is valid, and yields a countermodel to $\Ga\seq \De$ if $\Ga\seq \De$ is not valid.
\end{theorem}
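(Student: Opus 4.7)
The plan is to define an effective procedure by induction on a complexity measure of $\Gamma \seq \Delta$ that either outputs a cutfree derivation of the sequent or a countermodel for it. Take as the complexity measure the lexicographically ordered triple $(|{\vvee}|_\Gamma, |{\vvee}|_\Delta, \ell)$, where $|{\vvee}|_\Gamma$ is the total number of $\vvee$-occurrences in formulas of $\Gamma$, $|{\vvee}|_\Delta$ is the number in formulas of $\Delta$, and $\ell$ is the sum of the sizes of the formulas in $\Gamma$ and $\Delta$. Each step of the procedure will apply one of the Boolean-combination facts $(\mathsf{L}\lnot)$--$(\mathsf{R}\vvee)$ listed just before the theorem, which together supply both the derivability reconstruction (to build the derivation on the way back up) and the semantic invertibility (to lift countermodels using Lemma \ref{di:lemma:countermodels}).

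The procedure is organised into three phases that respect the context restrictions of the rules. First, if some formula of $\Gamma$ contains a $\vvee$-occurrence, apply the unrestricted fact $(\mathsf{L}\vvee)$ and recurse on its two premises; if both recursive calls return derivations, combine them via $\mathsf{L}\vvee$, while if either returns a countermodel, lift it using the $\mathsf{L}\vvee$ clause of Lemma \ref{di:lemma:countermodels}. Second, once $\Gamma$ is classical, if $\Delta$ still contains a $\vvee$-occurrence, apply $(\mathsf{R}\vvee)$---whose classical-antecedent restriction is now met---and recurse on both candidate premises; if either returns a derivation, extend it via $\mathsf{R}\vvee$, while if both return countermodels, combine them using the $\mathsf{R}\vvee$ clause of Lemma \ref{di:lemma:countermodels}. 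Third, when both $\Gamma$ and $\Delta$ are classical, the sequent lies wholly within $\PL$ and we invoke the standard $\mathsf{G3cp}$ saturation procedure (see, e.g., \cite[pp. 104--105]{troelstra}) using the facts $(\mathsf{L}\lnot)$--$(\mathsf{R}\vee)$; the restrictions on $(\mathsf{R}\land)$ and $(\mathsf{L}\vee)$ are automatically satisfied since the entire succedent is classical at this point. Whenever the current sequent is atomic, either it is an axiom (yielding a one-step derivation) or the atomic clause of Lemma \ref{di:lemma:countermodels} supplies a countermodel.

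The main point to get right---and the reason for the phased organisation---is the interplay between the context restrictions on the three restricted invertibility facts. The split property underlying $(\mathsf{R}\vvee)$ requires a classical antecedent, and the restricted forms of $(\mathsf{R}\land)$ and $(\mathsf{L}\vee)$ require classical right contexts; eliminating antecedent $\vvee$-occurrences first via $\mathsf{L}\vvee$-inversion, then succedent $\vvee$-occurrences via $\mathsf{R}\vvee$-inversion, guarantees that whenever a restricted rule is invoked its relevant context is classical. The choice of complexity measure reflects exactly this stratification: $(\mathsf{L}\vvee)$-inversion strictly decreases $|{\vvee}|_\Gamma$; $(\mathsf{R}\vvee)$-inversion strictly decreases $|{\vvee}|_\Delta$ while keeping $|{\vvee}|_\Gamma$ at zero; and the classical inversions strictly decrease $\ell$ while keeping the first two components at zero. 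Termination follows, and the correctness of the output in both the valid and invalid cases is then a direct consequence of the two halves of each Boolean-combination fact together with the corresponding clauses of Lemma \ref{di:lemma:countermodels}.
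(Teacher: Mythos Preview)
Your proposal is correct and follows essentially the same three-phase strategy as the paper: first strip all $\vvee$-occurrences from the antecedent via $\mathsf{L}\vvee$-inversion, then (with the antecedent now classical, so the split property applies) strip $\vvee$-occurrences from the succedent via $\mathsf{R}\vvee$-inversion, and finally run the standard $\mathsf{G3cp}$ saturation on the resulting classical sequents. The only difference is presentational---you package the argument as a recursion with an explicit lexicographic termination measure, whereas the paper presents the same decomposition declaratively using its Boolean-combination notation $\leftmodels\vdash$.
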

\begin{proof}
By $\mathsf{L}\vvee$ and by the semantic invertibility of $\mathsf{L}\vvee$, we can find classical $\Xi_1;\ldots;\Xi_n$ such that
$$\bigdoublewedge_{1\leq i \leq n}(\Xi_i\seq \De) \leftmodels\vdash_{\scriptsize \mathsf{L}\vvee}\Gamma \seq \De.$$
By $\mathsf{R}\vvee$ and by the split property (Proposition \ref{di:prop:split}), we can find classical $\Lambda_{11};\ldots;\Lambda_{nm_n}$ such that
$$\bigdoublewedge_{1\leq i \leq n}\bigdoublevee_{1\leq j \leq m_i}(\Xi_i\seq \Lambda_{ij}) \leftmodels\vdash_{\scriptsize \mathsf{R}\vvee}\bigdoublewedge_{1\leq i \leq n}(\Xi_i\seq \De).$$
Finally, by the rules of $\mathsf{G3cp}^-$ together with the semantic invertibility of $\mathsf{G3cp}^-$, we can find atomic sequents $\Xi_{111}\seq \Lambda_{111};\ldots;\Xi_{nm_n q_{nm_n}}\seq \Lambda_{nm_nq_{nm_n}}$ such that 
$$ \bigdoublewedge_{1\leq i \leq n}\bigdoublevee_{1\leq j \leq m_i}\bigdoublewedge_{1 \leq k \leq q_{ij}}(\Xi_{ijk}\seq \Lambda_{ijk})\leftmodels\vdash_{\scriptsize \mathsf{G3cp}^-}\bigdoublewedge_{1\leq i \leq n}\bigdoublevee_{1\leq j \leq m_i}(\Xi_i\seq \Lambda_{ij}).$$
Therefore, 
$$ \bigdoublewedge_{1\leq i \leq n}\bigdoublevee_{1\leq j \leq m_i}\bigdoublewedge_{1 \leq k \leq q_{ij}}(\Xi_{ijk}\seq \Lambda_{ijk})\leftmodels\vdash \Gamma \seq \De.$$
Then if $\Gamma \seq \Delta$ is valid, so is $$\bigdoublewedge_{1\leq i \leq n}\bigdoublevee_{1\leq j \leq m_i}\bigdoublewedge_{1 \leq k \leq q_{ij}}(\Xi_{ijk}\seq \Lambda_{ijk}),$$ whence by by Corollary \ref{di:coro:cutfree_classical_completeness}, $$\vdash \bigdoublewedge_{1\leq i \leq n}\bigdoublevee_{1\leq j \leq m_i}\bigdoublewedge_{1 \leq k \leq q_{ij}}(\Xi_{ijk}\seq \Lambda_{ijk}),$$ and therefore $\vdash \Ga\seq \De$. Conversely, if $\Gamma \seq \Delta$ is invalid, then by the soundness of $\mathsf{GT}^-$ and Corollary \ref{di:coro:cutfree_classical_completeness}, we also have that $$\bigdoublewedge_{1\leq i \leq n}\bigdoublevee_{1\leq j \leq m_i}\bigdoublewedge_{1 \leq k \leq q_{ij}}(\Xi_{ijk}\seq \Lambda_{ijk})$$ is invalid. Lemma \ref{di:lemma:countermodels} then yields a procedure for constructing countermodels on $\mathsf{P}(\Ga)\cup \mathsf{P}(\De)$ to the invalid atomic sequents in this combination, as well as for constructing a countermodel to $\Ga\seq \De$ from these countermodels.
\end{proof}

\begin{example} We write
               \[ 
     \AxiomC{$\Ga_1\seq \De_1$}
     \AxiomC{$\Ga_2\seq \De_2$}
 \RightLabel{{\footnotesize R}}
 \BinaryInfC{$\Ga_3\seq \De_3$}
 \DisplayProof
 \]
 to denote $(\Ga_1\seq \De_1 \mathrel{\doublewedge}\Ga_2\seq \De_2 ) \leftmodels\vdash_{\scriptsize R} \Ga_3\seq \De_3 $ for all rules $R$ other than $\mathsf{R}\vvee$ (and similarly for single-premise rules), and we write
               \[ 
     \AxiomC{$\Xi\seq \chi\{\phi_L\},\De$}
     \AxiomC{$\Xi\seq \chi\{\phi_R\},\De$}
 \RightLabel{{\footnotesize $\mathsf{R}\vvee$}}
 \doubleLine
 \BinaryInfC{$\Xi\seq \chi\{\phi_L\vvee\phi_R\},\De$}
 \DisplayProof
 \]
 to denote $(\Xi\seq \chi\{\phi_L\},\De \mathrel{\doublevee}\Xi\seq \chi\{\phi_R\},\De) \leftmodels\vdash_{\scriptsize \mathsf{L}\vvee} \Xi\seq \chi\{\phi_L\vvee\phi_R\},\De$. Valid sequents are blue; invalid sequents are red; countermodels are written above the sequent arrows. The following demonstrates how the procedure yields a countermodel $\{v_p,v_{\bar{p}}\}$ (where $v_p \models p$ and $v_{\bar{p}}\not \models p$) to the invalid sequent $p \vvee(p\vee\lnot p)\seq p\vvee\lnot p$.
               \[ 
        \AxiomC{{\color{blue} $p\seq p$}}
            \AxiomC{{\color{red} $p,p\overset{\scriptsize \{v_p\}}{\seq} $}}
            \RightLabel{{\footnotesize $\mathsf{R}\lnot $}}
            \UnaryInfC{{\color{red} $p\overset{\scriptsize \{v_p\}}{\seq}\lnot p$}}
        \RightLabel{{\footnotesize $\mathsf{R}\vvee$}}
        \doubleLine
    \BinaryInfC{{{\color{blue} $p\seq p\vvee\lnot p$}}}
        \AxiomC{{\color{blue}$p\seq p$}}
        \AxiomC{{\color{red}$\overset{\scriptsize \{v_{\bar{p}}\}}{\seq} p, p$}}
    \RightLabel{{\footnotesize $\mathsf{L}\lnot$}}
        \UnaryInfC{{\color{red}$\lnot p\overset{\scriptsize \{v_{\bar{p}}\}}{\seq} p$}}
    \RightLabel{{\footnotesize $\mathsf{L}\vee$}}
        \BinaryInfC{{\color{red}$p\vee\lnot p\overset{\scriptsize \{v_{\bar{p}}\}}{\seq} p$}}
        \AxiomC{{\color{red}$p,p\overset{\scriptsize \{v_p\}}{\seq} $}}
        \RightLabel{{\footnotesize $\mathsf{R}\lnot$}}
        \UnaryInfC{{\color{red}$p\overset{\scriptsize \{v_p\}}{\seq} \lnot p$}}
        \AxiomC{{\color{blue}$p\seq p$}}
        \RightLabel{{\footnotesize $\mathsf{L}\lnot$}}
        \UnaryInfC{{\color{blue}$\lnot p,p\seq $}}
        \RightLabel{{\footnotesize $\mathsf{R}\lnot$}}
        \UnaryInfC{{\color{blue}$\lnot p\seq \lnot p$}}
    \RightLabel{{\footnotesize $\mathsf{L}\vee$}}
        \BinaryInfC{{\color{red}$p\vee\lnot p\overset{\scriptsize \{v_p\}}{\seq} \lnot p$}}
    \RightLabel{{\footnotesize $\mathsf{R}\vvee$}}
    \doubleLine
    \BinaryInfC{{\color{red}$p\vee\lnot p\overset{\scriptsize \{v_p,v_{\bar{p}}\}}{\seq} p\vvee\lnot p$}}
 \RightLabel{{\footnotesize $\mathsf{L}\vvee$}}
 \BinaryInfC{{\color{red}$ p \vvee(p\vee\lnot p)\overset{\scriptsize \{v_p,v_{\bar{p}}\}}{\seq} p\vvee\lnot p$}}
 \DisplayProof
 \]   
 Or, in our inline notation,
 
 \begin{centering}
     $(({\color{blue}p\seq p } \mathrel{\doublevee} {\color{red}p,p\seq })\mathrel{\doublewedge}(({\color{blue} p\seq p} \mathrel{\doublewedge} {\color{red} \seq p,p}) \mathrel{\doublevee} ({\color{red}p ,p\seq } \mathrel{\doublewedge} {\color{blue} p\seq p} ))$\\
     $\leftmodels\vdash_{\scriptsize \mathsf{G3cp}^-}$\\
     $(({\color{blue}p\seq p } \mathrel{\doublevee} {\color{red}p\seq \lnot p})\mathrel{\doublewedge}( {\color{red}p\vee \lnot p \seq p} \mathrel{\doublevee} {\color{red}p\vee \lnot p \seq \lnot p}))$\\
$\leftmodels\vdash_{\scriptsize \mathsf{R}\vvee}$\\
$( {\color{blue}p\seq p\vvee\lnot p}\mathrel{\doublewedge} {\color{red}p\vee \lnot p \seq p \vvee\lnot p})$\\
$\leftmodels\vdash_{\scriptsize \mathsf{L}\vvee}$\\
${\color{red}p \vvee(p\vee\lnot p)\seq p\vvee\lnot p}$\\
 \end{centering}
 \end{example}

\section{Partial Resolutions and the Subformula Property} \label{di:section:partial_resolutions}
$\mathsf{GT}^-$ has a weak subformula property. We can formulate this property in terms of what we will call \emph{partial resolutions}---a generalization of resolutions (Section \ref{di:section:resolutions}). Partial resolutions are also a helpful tool for understanding how our system functions: the deep-inference rules transform partial resolutions of a formula of a higher degree (those of the highest degree being the resolutions of the formula) to partial resolutions of a lower degree (the only partial resolution of the lowest degree being the formula itself); in conjunction with the resolution theorem (Theorem \ref{di:theorem:resolution}) and cutfree classical completeness (Corollary \ref{di:coro:cutfree_classical_completeness}), this yields another way of showing the cutfree completeness of $\mathsf{GT}$.

In order to define partial resolutions, it is helpful to first define some more notation for the type of substitutions carried out via the deep inference rules. The reader may wish to examine Example \ref{di:ex:partial_resolutions} as they read the following definitions.

A \emph{$\vvee$-labelled formula (occurrence)} is a formula occurrence $\phi$ in which each occurrence of the disjunction symbol $\vvee$ is assigned a natural number. We indicate the label of each $\vvee$ by writing the corresponding number as a subscript. Examples: (i) $p\vvee_0 (q\vvee_1 r)$ (ii) $(p \vvee_2 x)\vvee_2 (q\vvee_1 r)$. Example (i) is also an example of what we call the \emph{$\vvee$-labelling} of a formula. The $\vvee$-labelling of a formula $\phi$ is denoted $\phi_{\scriptsize\vvee}$, and it is obtained  by labelling each occurrence of $\vvee$ according to its position in the formula (from left to right).

Let $|\phi|_{\scriptsize\vvee}$ denote the number of occurrences of $\vvee$ in $\phi$. We extend the notation for subformula occurrence substitutions to $\vvee$-labelled formulas in the obvious way, and let $$\phi[i_1/j_1]\ldots[i_n/j_n]:=\phi_{\scriptsize\vvee}[\chi_{1i_1}/\chi_{1L}\vvee_{j_1}\chi_{1R}]\ldots[\chi_{ni_n}/\chi_{nL}\vvee_{j_n}\chi_{nR}],$$ where $\{i_x\}_{1 \leq x \leq n}\subseteq\{L,R\}$, $\{j_x\}_{1 \leq x \leq n}\subseteq \{0,\ldots,|\phi|_{\scriptsize\vvee}-1 \}$, and the $j_x$ are distinct. For instance, for $\phi_{\scriptsize\vvee}=p\vvee_0(q\vvee_1 r)$, we have $\phi[L/0]=p$, $\phi[R/0]=q\vvee_1 r$, $\phi[L/1]=p \vvee_0 q$, $\phi[L/0][L/1]=p$, $\phi[R/0][L/1]=q$, and $\phi[L/1][R/0]=q$. 

\begin{definition}[Partial resolutions]
The set $\PPP\RRR_n (\phi)$ of \emph{partial resolutions of $\phi$ of degree $n\in \{0,\ldots,|\phi|_{\scriptsize\vvee}\}$} is defined by
\begin{align*}
    \PPP\RRR_n(\phi):=\{\phi[i_1/j_1]\ldots[i_n/j_n] \mid  \{i_x\}_{1 \leq x \leq n}\subseteq  \{L,R\},\;\{j_x\}_{1 \leq x \leq n}\subseteq \{0,\ldots,|\phi|_{\scriptsize\vvee}-1 \},\; x\neq y \text{ implies }j_x\neq j_y\}.
\end{align*}

The set $\PPP\RRR (\phi)$ of 
\emph{partial resolutions of $\phi$} is defined by $$\PPP\RRR(\phi):=\bigcup_{0\leq n\leq |\phi|_{\tiny\vvee}}\PPP\RRR_n(\phi).$$

\end{definition}

\begin{example} \label{di:ex:partial_resolutions}
For $\phi_{\scriptsize\vvee}=p\vvee_0 (q\vvee_1 r)$,

{\footnotesize
\begin{tikzpicture}[every text node part/.style={align=center},
level 1/.style={sibling distance=40mm},
level 2/.style={sibling distance=20mm}]

\fill[gray,rounded corners,fill opacity=0.1]  (-7.95,-0.6) node[black, above right,opacity=1] {\tiny{$\PPP\RRR_0(\phi)=\{\phi\}$}}   rectangle  (7.95,0.6)  ;

\fill[gray,rounded corners,fill opacity=0.2]  (-7.95,-2.1) node[black,above right,opacity=1] {\tiny{$\PPP\RRR_1(\phi)$}}   rectangle  (7.95,-0.9)  ;

\fill[gray,rounded corners,fill opacity=0.3]  (-7.95,-3.8) node[black,above right,opacity=1] {\tiny{$\PPP\RRR_2(\phi)=\RRR(\phi)$}}   rectangle  (7.95,-2.4)  ;

  \node {$\phi$\\$p\vvee_0 (q\vvee_1 r)$} [edge from parent fork down]
    child {node {$\phi[L/0]$\\$p$} 
      child {node {$\phi[L/0][L/1]$\\$p$} edge from parent node[left,draw=none] {\tiny$1$} }
      child {node {$\phi[L/0][R/1]$\\$p$} edge from parent node[right,draw=none] {\tiny$1$} } edge from parent node[below left,draw=none] {\tiny$0$} }
    child {node {$\phi[R/0]$\\$q\vvee_1 r$}
      child {node {$\phi[R/0][L/1]$\\$q$} edge from parent node[left,draw=none] {\tiny$1$} }
      child {node {$\phi[R/0][R/1]$\\$r$} edge from parent node[right,draw=none] {\tiny$1$} } edge from parent node[below left,draw=none] {\tiny$0$} }
    child {node {$\phi[L/1]$\\$p\vvee_0 q$}
      child {node {$\phi[L/1][L/0]$\\$p$} edge from parent node[left,draw=none] {\tiny$0$} }
      child {node {$\phi[L/1][R/0]$\\$q$} edge from parent node[right,draw=none] {\tiny$0$} } edge from parent node[below right,draw=none] {\tiny$1$} }
    child {node {$\phi[R/1]$\\$p\vvee_0 r$}
      child {node {$\phi[R/1][L/0]$\\$p$} edge from parent node[left,draw=none] {\tiny$0$} }
      child {node {$\phi[R/1][R/0]$\\$r$} edge from parent node[right,draw=none] {\tiny$0$}  } edge from parent node[below right,draw=none] {\tiny$1$}
    };
\end{tikzpicture}
}

It is easy to see that $\PPP\RRR_{|\phi|_{\tiny\vvee}}(\phi)=\RRR(\phi)$; we generalize this in Proposition \ref{di:prop:partial_resolutions_are_resolutions} below.
\end{example}

The guiding intuition behind partial resolutions is that if $\vvee$ represents a question-forming operator as in inquisitive logic, each formula in $\PPP\RRR_n(\phi)$ represents an attempt to resolve $n$ ``questions'' in $\phi$.\footnote{The notion of ``question'' here is such that each occurrence $\vvee$ corresponds to one question. The formula $a\vvee_0 (b\vvee_1 c)$ (which could be used to formalize ``Is Mary in Amsterdam or is she in Beijing or in Copenhagen?'') has two questions. Each of the partial resolutions $a$ (``Mary is in Amsterdam'') and $b\vvee_1 c$ (``Is Mary in Beijing or in Copenhagen?'') resolves question $0$. The first partial resolution $a$, since it itself contains no questions, also constitutes a (full) resolution to $\phi$.}  The formulas in $\RRR(\phi)=\PPP\RRR_{|\phi|_{\tiny\vvee}}(\phi)$, then, resolve all the questions in $\phi$.

We may now formulate the weak subformula property for $\mathsf{GT}^-$. Observe first that in the tree in Example \ref{di:ex:partial_resolutions}, the rule $\mathsf{R}\vvee$ takes a formula labelling a node to that labelling its parent node (that is, an application of $\mathsf{R}\vvee$ may have as its active formula the label of the child node and as its principal formula that of the parent), and the rule $\mathsf{L}\vvee$ takes the labels of the child nodes with the same edge-label ($0$ or $1$) to the label of their parent. We may accordingly reformulate these rules as follows, where we let $\chi[\tau]:=\chi[i_1/j_1]\ldots[i_{n-1}/j_{n-1}]$:\\

\noindent
\begin{tabular}{C{0.50\textwidth} C{0.44\textwidth}}
 \AxiomC{$\Ga,\chi[\tau][L/j_n]\seq \De$}
 \AxiomC{$\Ga,\chi[\tau][R/j_n]\seq \De$}
 \RightLabel{{\footnotesize $\mathsf{L}\vvee$}}
 \BinaryInfC{$\Ga,\chi[\tau]\seq \De$}
 \DisplayProof & 
 \AxiomC{$\Ga\seq \chi[\tau][i_n/j_n],\De$}
 \RightLabel{{\footnotesize $\mathsf{R}\vvee$}}
 \UnaryInfC{$\Ga\seq \chi[\tau],\De$}
 \DisplayProof\\
 \\
\end{tabular}

The following are therefore immediate from the rules of $\mathsf{GT}^-$:

\begin{lemma}\label{di:lemma:deep_inference_rules_with_partial_resolutions}\
    \begin{enumerate}
        \item[(i)] If $\Gamma\seq \chi[\tau][i_n/j_n],\Delta$, then $\Gamma\seq \chi[\tau],\Delta$.
        \item[(ii)] If $\Gamma,\chi[\tau][L/j_n]\seq \Delta$ and $\Gamma,\chi[\tau][R/j_n]\seq \Delta$, then $\Gamma, \chi[\tau]\seq\Delta$.
    \end{enumerate}
\end{lemma}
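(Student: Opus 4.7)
The plan is to observe that both items are essentially restatements of the rules $\mathsf{R}\vvee$ and $\mathsf{L}\vvee$ of $\mathsf{GT}^-$, reformulated in the more compact partial-resolution notation; each claim will follow by a single rule application once the syntactic correspondence is verified. I would first unpack the substitution $\chi[\tau][i_n/j_n]$. Since the indices $j_1,\ldots,j_n$ in $[\tau][i_n/j_n]$ are required to be distinct, one of two things happens: either (a) the target $\vvee_{j_n}$-occurrence is still present in $\chi[\tau]$ as some subformula $\chi_L\vvee_{j_n}\chi_R$, or (b) it has already been deleted because it sat inside a disjunct discarded by an earlier substitution in $\tau$. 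In case (b), the default clause $\phi[\psi/\chi]:=\phi$ (when $\chi$ does not occur in $\phi$) gives $\chi[\tau][i_n/j_n]=\chi[\tau]$, and both implications in the lemma become trivial.

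In case (a), the formula $\chi[\tau][i_n/j_n]$ is exactly $\chi[\tau]$ with the occurrence $\chi_L\vvee_{j_n}\chi_R$ replaced by $\chi_{i_n}$. For (i), the passage from $\chi[\tau][i_n/j_n]$ to $\chi[\tau]$ in the succedent is then directly an instance of $\mathsf{R}\vvee$ with principal subformula $\chi_L\vvee_{j_n}\chi_R$ and active subformula $\chi_{i_n}$. For (ii), the passage from the pair $\chi[\tau][L/j_n]$ and $\chi[\tau][R/j_n]$ to $\chi[\tau]$ in the antecedent is directly an instance of $\mathsf{L}\vvee$ with the same principal subformula.

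The only remaining obligation is the side condition $(*)$ on $\mathsf{L}\vvee$ and $\mathsf{R}\vvee$: the principal subformula occurrence $\chi_L\vvee_{j_n}\chi_R$ must not lie in the scope of a negation in $\chi[\tau]$. But this holds automatically from the grammar of $\PLVVEE$: negations may only be applied to classical formulas, and classical formulas are $\vvee$-free by definition, so no occurrence of $\vvee$ can ever sit under a negation. Hence (i) follows by one application of $\mathsf{R}\vvee$ and (ii) by one application of $\mathsf{L}\vvee$.

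There is essentially no nontrivial obstacle here; the only thing to watch out for is the degenerate case (b), where a substitution $[i_n/j_n]$ acts vacuously because the corresponding $\vvee_{j_n}$ has been erased by an earlier substitution in $\tau$. The partial-resolution notation was specifically designed to absorb such vacuous cases via the default clause of $\phi[\psi/\chi]$, so no further bookkeeping is required.
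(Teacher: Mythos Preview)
Your proposal is correct and takes essentially the same approach as the paper, which simply states that the lemma is ``immediate from the rules of $\mathsf{GT}^-$'' after displaying the reformulation of $\mathsf{L}\vvee$ and $\mathsf{R}\vvee$ in partial-resolution notation. Your treatment is more thorough than the paper's: you explicitly handle the degenerate case where $\vvee_{j_n}$ has already been erased by an earlier substitution in $\tau$, and you verify the side condition $(*)$ via the syntactic constraint that negation applies only to $\vvee$-free formulas---both points the paper leaves implicit.
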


\sloppy
\begin{proposition}[Partial resolution subformula property]\label{di:prop:subformula_property} For any derivation $\mathcal{D}$ witnessing $\vdash_{\scriptsize \mathsf{GT}^-}\Gamma \seq \De$, each formula occurring in $\mathcal{D}$ is a subformula of a partial resolution of some formula occurring in $\Gamma,\Delta$.
\end{proposition}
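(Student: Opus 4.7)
The plan is to proceed by induction on the height of the derivation $\mathcal{D}$. The base case (a single axiom) is immediate, since every formula of the end-sequent belongs to its own degree-$0$ partial resolution set $\PPP\RRR_0$. For the inductive step, let $R$ be the last rule applied, with premise sequent(s) $\Gamma_k \seq \Delta_k$ and conclusion $\Gamma \seq \Delta$. By the induction hypothesis applied to each subderivation above a premise, every formula occurring there is a subformula of a partial resolution of some formula in $\Gamma_k, \Delta_k$; formulas of $\Gamma, \Delta$ themselves are trivially subformulas of their own degree-$0$ partial resolutions. So the only remaining thing to verify is that the active formulas of $R$ can be ``lifted'' to subformulas of partial resolutions of the principal formula of $R$ in $\Gamma, \Delta$.

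For all rules other than $\mathsf{L}\vvee$ and $\mathsf{R}\vvee$, each active formula is an immediate subformula of the principal formula, so the ``lifting'' reduces to the fact that any partial resolution of a subformula $\phi'$ of $\phi$ extends to a partial resolution of $\phi$ by leaving the surrounding context unchanged (\emph{closure under passing to a subformula}). For $\mathsf{L}\vvee$ and $\mathsf{R}\vvee$, using the reformulation in Lemma \ref{di:lemma:deep_inference_rules_with_partial_resolutions}, each active formula $\chi[\tau][i_n/j_n]$ is itself a partial resolution (of degree one higher) of the principal formula $\chi[\tau]$, which in turn is a partial resolution of some formula in $\Gamma, \Delta$. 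So here the ``lifting'' reduces instead to the fact that a partial resolution of a partial resolution of $\phi$ is again a partial resolution of $\phi$ (\emph{closure under passing to a partial resolution}). Together with the IH on the subderivations, these two closure properties are all that is required to complete the induction.

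The main technical obstacle lies in verifying the second closure property, i.e., that the composition of two partial-resolution substitutions is again a partial-resolution substitution. This is where bookkeeping becomes fiddly: the canonical $\vvee$-labelling of a partial resolution $\phi' = \phi[i_1/j_1]\ldots[i_n/j_n]$ of $\phi$ does not in general match the labels carried by the surviving $\vvee$-occurrences of $\phi$, so a subsequent substitution $\phi'[i'_1/j'_1]\ldots[i'_m/j'_m]$ uses indices $j'_x$ that must be translated back to the corresponding indices in $\phi$. The key step is to set up an explicit order-preserving bijection between the surviving $\vvee$-occurrences of $\phi$ (those whose indices do not appear among $j_1,\ldots,j_n$) and the $\vvee$-occurrences of $\phi'$, and then to check that the composed substitution re-expresses as $\phi[k_1/\ell_1]\ldots[k_{n+m}/\ell_{n+m}]$ with distinct $\ell_x \in \{0,\ldots,|\phi|_\vvee - 1\}$. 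The first closure property is similar but simpler, requiring only that one pad a given substitution on $\phi'$ with the identity on the part of $\phi$ outside of $\phi'$. Once both are established, the proposition follows.
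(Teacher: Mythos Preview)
Your proposal is correct and takes essentially the same approach as the paper, which in fact states this proposition without proof, treating it as immediate from the reformulation of the deep-inference rules in Lemma~\ref{di:lemma:deep_inference_rules_with_partial_resolutions} (showing that the active formulas of $\mathsf{L}\vvee$ and $\mathsf{R}\vvee$ are themselves partial resolutions of the principal formula). Your induction on derivation height, together with the two closure properties you isolate---that a partial resolution of a subformula embeds as a subformula of a partial resolution of the whole, and that a partial resolution of a partial resolution is again a partial resolution---supplies exactly the bookkeeping the paper omits; the label-translation argument you sketch for the second closure property is the right way to handle the mismatch between the canonical $\vvee$-labelling of $\phi'$ and the surviving original labels from $\phi$.
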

\fussy

 There is an instructive alternative way of proving the cutfree completeness of $\mathsf{GT}$ that employs the reformulation above to show that completeness follows from the resolution theorem (Theorem \ref{di:theorem:resolution}) in conjunction with cutfree classical completeness (Corollary \ref{di:coro:cutfree_classical_completeness}). To explain this in more detail, we now also define partial resolutions for multisets.

We assume that each multiset $\Gamma$ (of $\vvee$-labelled formulas) comes with some canonical ordering of its elements and write $\vec{\Gamma}=\langle\phi_0,\ldots,\phi_q \rangle$ for the sequence corresponding to this canonical ordering. Informally, given $\vec{\Gamma}=\langle\phi_0,\ldots,\phi_q \rangle$, we write $\Gamma[i_1/j_1]_{k_1}\ldots[i_n/j_n]_{k_n}$ where $\{k_x\}_{1 \leq x \leq n}\subseteq  \{0,\ldots,q\}$ for the multiset resulting from applying each substitution  $[i_x/j_x]_{k_x}$ such that $k_x=y$ to the formula $\phi_y$ in $\Gamma$. More formally, $\Gamma[i_1/j_1]_{k_1}\ldots[i_n/j_n]_{k_n}$---where $\{i_x\}_{1 \leq x \leq n}\subseteq  \{L,R\}$; $\{k_x\}_{1 \leq x \leq n}\subseteq  \{0,\ldots,q\}$; for each $y\in \{0,\ldots,q\}$, $\{j_x\mid k_x=y\}_{1 \leq x \leq n}\subseteq \{0,\ldots, |\phi_y|_{\scriptsize\vvee}-1\}$; and the $j_x$ such that $k_x=y$ for a given $y\in \{0,\ldots,q\}$ are distinct---denotes the multiset consisting of the elements of the sequence $\langle\phi_0[\tau_0],\ldots ,\ldots,\phi_q[\tau_q] \rangle$, where $[\tau_y]$ is formed by taking the subsequence of $[i_1/j_1]_{k_1}\ldots[i_n/j_n]_{k_n}$ consisting of all elements with $k_x=y$ and removing the $k_x$-indices. E.g., for $\phi=p\vvee_0(q\vvee_1 r)$ and $\psi=p\vvee_0 r$ and $\vec{\Gamma}=\langle\phi,\phi,\psi\rangle$, we have $\Gamma[L/0]_2=\{\phi,\phi,p\}$, $\Gamma[L/1]_1=\{\phi,p\vvee_0 q,\psi\}$, $\Gamma[L/0]_2[L/1]_1=\{\phi,p\vvee_0 q,p\}$, and $\Gamma[L/0]_2[L/1]_1[R/0]_1=\{\phi, q,p\}$. Let $|\Gamma|_{\scriptsize\vvee}:=\sum_{\phi\in \Gamma}|\phi|_{\scriptsize\vvee}$.

\begin{definition}[Partial resolutions for multisets]
The set (of multisets) $\PPP\RRR_n (\Gamma)$ of \emph{partial resolutions of $\Gamma=\{\phi_0,\ldots,\phi_q\}$ of degree $n\in \{0,\ldots,|\Gamma|_{\scriptsize\vvee}\}$} is defined by
\begin{align*}
    \PPP\RRR_n(\Gamma):=\{\Gamma[i_1/j_1]_{k_1}\ldots[i_n/j_n]_{k_n} \mid\; &\{i_x\}_{1 \leq x \leq n}\subseteq  \{L,R\}, \{k_x\}_{1 \leq x \leq n}\subseteq  \{0,\ldots,q\},\\
    &\text{for each $y\in \{0,\ldots, q\}$: }\{j_x\mid k_x=y\}_{1 \leq x \leq n}\subseteq \{0,\ldots,|\phi_y|_{\scriptsize\vvee} -1\},\\
    &[x\neq z\text{ and }k_x=k_z=y] \text{ implies }j_x\neq j_z\}.
\end{align*}

The set (of multisets) $\PPP\RRR (\Gamma)$ of 
\emph{partial resolutions of $\Gamma$} is defined by $$ 
\PPP\RRR(\Gamma):=\bigcup_{0 \leq n\leq |\Gamma|_{\tiny\vvee}}\PPP\RRR_n(\Gamma).$$
\end{definition}

\begin{example} For $\Gamma=\{ \phi_0,\phi_1\}=\{ p\vvee_0 (q\vvee_1 r),s\vvee_0 r\}$, where $\vec{\Gamma}=\langle
\phi_0,\phi_1\rangle=\langle p\vvee_0 (q\vvee_1 r),s\vvee_0 r\rangle$, we have the following (where we do not display the entire partial resolution-tree):

{\footnotesize
\begin{tikzpicture}[every text node part/.style={align=center},
level 1/.style={sibling distance=26mm},
level 3/.style={sibling distance=13mm}]

\fill[gray,rounded corners,fill opacity=0.1]  (-7.95,-0.6) node[black, above right,opacity=1] {\tiny{$\PPP\RRR_0(\Gamma)=\{\Gamma\}$}}   rectangle  (7.95,0.6)  ;

\fill[gray,rounded corners,fill opacity=0.2]  (-7.95,-2.1) node[black,above right,opacity=1] {\tiny{$\PPP\RRR_1(\Gamma)$}}   rectangle  (7.95,-0.9)  ;

\fill[gray,rounded corners,fill opacity=0.3]  (-7.95,-3.6) node[black,above right,opacity=1] {\tiny{$\PPP\RRR_2(\Gamma)$}}   rectangle  (7.95,-2.4)  ;

\fill[gray,rounded corners,fill opacity=0.4]  (-7.95,-5.1) node[black,above right,opacity=1] {\tiny{$\PPP\RRR_3(\Gamma)=\RRR(\Gamma)$}}   rectangle  (7.95,-3.9)  ;

  \node {$\Gamma$\\$\{ p\vvee_0 (q\vvee_1 r),s\vvee_0 r\}$} [edge from parent fork down]
    child {node (gaL00) {$\{ p,s\vvee_0 r\}$} [sibling distance=12mm]
    edge from parent node[below left,draw=none] {\tiny$[L/0]_0$} 
    }
    child {node (gaR00) {$\{ q\vvee_1 r,s\vvee_0 r\}$} [sibling distance=15mm]
    edge from parent node[below left,draw=none] {\tiny$[R/0]_0$}
    }
    child {node (gaL10) {$\{ p\vvee_0 q,s\vvee_0 r\}$}
        child {node (psr) {$\{p,s\vvee_0 r\}$}
            child {node (ps) {\scriptsize$\{p,s\}$} 
            edge from parent node[below left,draw=none] {\tiny$[L/0]_1$}
            } 
            child {node (pr) {\scriptsize$\{p,r\}$}
            edge from parent node[below left,draw=none] {\tiny$[R/0]_1$}
            } 
        edge from parent node[below left,draw=none] {\tiny$[L/0]_0$}
        } 
        child {node (qsr) {$\{q,s\vvee_0 r\}$}  
            child {node  (qs) {\scriptsize$\{q,s\}$} 
            edge from parent node[below left,draw=none] {\tiny$[L/0]_1$}
            } 
            child {node (qr) {\scriptsize$\{q,r\}$} 
            edge from parent node[below left,draw=none] {\tiny$[R/0]_1$}
            } 
        edge from parent node[below left,draw=none] {\tiny$[R/0]_0$}
        }
        child {node (pqr) {$\{p\vvee_0 q,r\}$} 
            child {node (pr2) {$\{p,r\}$} 
            edge from parent node[below right,draw=none] {\tiny$[L/0]_0$}
            } 
            child {node (qr2) {$\{q,r\}$} 
            edge from parent node[below right,draw=none] {\tiny$[R/0]_0$}
            } 
        edge from parent node[below right,draw=none] {\tiny$[L/0]_1$}
        }
        child {node (pqs) {$\{p\vvee_0 q,s\}$} 
            child {node (ps2) {$\{p,s\}$} 
            edge from parent node[below right,draw=none] {\tiny$[L/0]_0$}
            } 
            child {node (qs2) {$\{q,s\}$} 
            edge from parent node[below right,draw=none] {\tiny$[R/0]_0$}
            } 
        edge from parent node[below right,draw=none] {\tiny$[R/0]_1$}
        }
    edge from parent node[below left,draw=none] {\tiny$[L/1]_0$}
    }
    child {node (gaR10) {$\{ p\vvee_0 r,s\vvee_0 r\}$} [sibling distance=15mm]
    edge from parent node[below right,draw=none] {\tiny$[R/1]_0$}
    }
     child {node (gaL01) {$\{ p\vvee_0 (q\vvee_1 r),s\}$} [sibling distance=15mm]
    edge from parent node[below right,draw=none] {\tiny$[L/0]_1$}
    }
     child {node (gaR01) {$\{ p\vvee_0 (q\vvee_1 r),r\}$} [sibling distance=12mm]
    edge from parent node[below right,draw=none] {\tiny$[R/0]_1$}
    };
\end{tikzpicture}
}

(Note that $\PPP\RRR_3(\Gamma)=\{\{ p,s\}, \{ p,r\},\{ q,s\},\{ q,r\},\{ r,s\}, \{ r,r\}\}=\RRR(\Gamma)$---the part of the tree displayed above does not display all of $\RRR(\Gamma)$.)
\end{example}

One can check that $\PPP\RRR_n(\{\phi\})=\{\{\psi\}\mid \psi\in \PPP\RRR_n(\phi)\}$, so this notion generalizes that for formulas. It is also not difficult to see that we have the following generalization of the fact we observed above (we omit the proof).
\begin{proposition} \label{di:prop:partial_resolutions_are_resolutions}
    $\PPP\RRR_{|\Gamma|_{\scriptsize\vvee}}(\Gamma)=\RRR(\Gamma)$.
\end{proposition}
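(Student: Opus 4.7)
The plan is to reduce the multiset statement to the already-noted formula-level identity $\PPP\RRR_{|\phi|_{\scriptsize\vvee}}(\phi)=\RRR(\phi)$ (observed in Example \ref{di:ex:partial_resolutions}) via a simple pigeonhole-style counting argument on the substitution sequence. I would prove both inclusions by showing that a maximal-degree substitution on $\Gamma$ is essentially the same data as picking, for each formula in $\Gamma$, a maximal-degree substitution on that formula.

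First, for the $\subseteq$-direction, I take an arbitrary element $\Gamma[i_1/j_1]_{k_1}\ldots[i_n/j_n]_{k_n}$ of $\PPP\RRR_{|\Gamma|_{\scriptsize\vvee}}(\Gamma)$ with $n=|\Gamma|_{\scriptsize\vvee}$, and for each $y\in\{0,\ldots,q\}$ I let $n_y$ denote the length of the subsequence $[\tau_y]$, i.e., the number of $x$ with $k_x=y$. By definition these subsequences partition the full sequence, so $\sum_y n_y = n = \sum_y |\phi_y|_{\scriptsize\vvee}$. On the other hand, for each $y$ the distinctness constraint ($k_x=k_z=y$ with $x\neq z$ forces $j_x\neq j_z$) combined with $\{j_x \mid k_x=y\}\subseteq\{0,\ldots,|\phi_y|_{\scriptsize\vvee}-1\}$ gives $n_y\leq |\phi_y|_{\scriptsize\vvee}$. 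The two together force $n_y=|\phi_y|_{\scriptsize\vvee}$ for every $y$. Hence each $\phi_y[\tau_y]\in\PPP\RRR_{|\phi_y|_{\scriptsize\vvee}}(\phi_y)=\RRR(\phi_y)$ by the formula version, so the whole multiset $\{\phi_0[\tau_0],\ldots,\phi_q[\tau_q]\}$ is the image of $\Gamma$ under a resolution function and therefore lies in $\RRR(\Gamma)$.

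For the reverse inclusion $\RRR(\Gamma)\subseteq\PPP\RRR_{|\Gamma|_{\scriptsize\vvee}}(\Gamma)$, I would take a resolution $\{f(\phi_0),\ldots,f(\phi_q)\}$ arising from a resolution function $f$. By the formula version applied to each $\phi_y$, there exists a substitution sequence $[\tau_y]$ of length exactly $|\phi_y|_{\scriptsize\vvee}$ witnessing $f(\phi_y)=\phi_y[\tau_y]$. Concatenating these sequences while annotating each substitution with its formula index $y$ (i.e., setting $k_x=y$ whenever the $x$th substitution is drawn from $[\tau_y]$) produces a valid substitution sequence of total length $\sum_y|\phi_y|_{\scriptsize\vvee}=|\Gamma|_{\scriptsize\vvee}$ for which the resulting multiset is exactly the chosen resolution. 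The main obstacle is really just making the bookkeeping for multiset-partial-resolutions precise enough to verify that the side conditions (distinctness of $j_x$ within each $k_x$-class, membership in the correct ranges) are preserved under the concatenation; once this is recorded carefully, both directions fall out immediately from the formula-level identity and the counting argument.
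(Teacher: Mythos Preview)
Your proof is correct. The paper omits the proof entirely (stating only ``we omit the proof''), so there is no argument to compare against; your pigeonhole reduction to the formula-level identity $\PPP\RRR_{|\phi|_{\scriptsize\vvee}}(\phi)=\RRR(\phi)$ is exactly the kind of straightforward verification the paper has in mind when it calls the result ``not difficult to see.''
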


And we have the following generalization of Lemma \ref{di:lemma:deep_inference_rules_with_partial_resolutions} (and hence of the deep-inference rules): 

\begin{lemma} \label{di:lemma:partial_resolution_derivations}\
    \begin{enumerate}
    \item[(i)] \makeatletter\def\@currentlabel{(i)}\makeatother\label{di:lemma:partial_resolution_derivations_i}For any $n\in \{0,\ldots,|\Delta_{\scriptsize\vvee}|-1\}$, if there is a $\Delta_{n+1}\in \PPP\RRR_{n+1}(\Delta)$ such that $\vdash_{\scriptsize \mathsf{GT}^-}\Gamma\seq \Delta_{n+1}$, then there is a $\Delta_{n}\in \PPP\RRR_{n}(\Delta)$ such that $\vdash_{\scriptsize \mathsf{GT}^-}\Gamma\seq \Delta_{n}$.
    \item[(ii)] \makeatletter\def\@currentlabel{(ii)}\makeatother\label{di:lemma:partial_resolution_derivations_ii}For any $n\in \{0,\ldots,|\Gamma_{\scriptsize\vvee}|-1\}$, if for all $\Gamma_{n+1}\in \PPP\RRR_{n+1}(\Gamma)$ we have $\vdash_{\scriptsize \mathsf{GT}^-}\Gamma_{n+1}\seq \Delta$, then for all $\Gamma_{n}\in \PPP\RRR_{n}(\Gamma)$ we have $\vdash_{\scriptsize \mathsf{GT}^-}\Gamma_n\seq \Delta$.
    \end{enumerate}
\end{lemma}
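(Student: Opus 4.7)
The plan is to exploit the reformulation of the deep-inference rules $\mathsf{R}\vvee$ and $\mathsf{L}\vvee$ in partial-resolution terms (the multiset generalisation of Lemma \ref{di:lemma:deep_inference_rules_with_partial_resolutions}): a single application of $\mathsf{R}\vvee$ undoes one substitution step in the construction of a partial resolution of $\Delta$, and a single application of $\mathsf{L}\vvee$ combines the two derivations obtained by resolving one extra $\vvee$ in $\Gamma$ in the two possible ways. So I would prove \ref{di:lemma:partial_resolution_derivations_i} by undoing the last substitution in the definition of $\Delta_{n+1}$, and \ref{di:lemma:partial_resolution_derivations_ii} by picking some $\vvee$-occurrence in $\Gamma_n$ that has not yet been resolved and applying $\mathsf{L}\vvee$ to the two witnessing derivations.

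For \ref{di:lemma:partial_resolution_derivations_i}, fix an explicit representation $\Delta_{n+1}=\Delta[i_1/j_1]_{k_1}\ldots[i_{n+1}/j_{n+1}]_{k_{n+1}}$ and set $\Delta_n:=\Delta[i_1/j_1]_{k_1}\ldots[i_n/j_n]_{k_n}$. Since the first $n$ substitutions still satisfy the distinctness and label-validity constraints defining $\PPP\RRR_n(\Delta)$, we have $\Delta_n\in\PPP\RRR_n(\Delta)$. If the occurrence $\vvee_{j_{n+1}}$ in the $k_{n+1}$-th formula of $\Delta$ survives the first $n$ substitutions, then $\Delta_{n+1}$ is obtained from $\Delta_n$ by replacing a single subformula occurrence of the form $\chi_L\vvee\chi_R$ with $\chi_{i_{n+1}}$, so a single application of $\mathsf{R}\vvee$ to the given derivation of $\Gamma\seq\Delta_{n+1}$ produces a derivation of $\Gamma\seq\Delta_n$. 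Otherwise the final substitution is vacuous, $\Delta_n=\Delta_{n+1}$, and the hypothesis already delivers what we need.

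For \ref{di:lemma:partial_resolution_derivations_ii}, write $\Gamma_n=\Gamma[i_1/j_1]_{k_1}\ldots[i_n/j_n]_{k_n}$. If $\Gamma_n$ still contains some $\vvee_j$ in its $k$-th formula, then necessarily $j\notin\{j_x:k_x=k\}$ (otherwise that occurrence would already have been substituted away), so both $\Gamma_n[L/j]_k$ and $\Gamma_n[R/j]_k$ belong to $\PPP\RRR_{n+1}(\Gamma)$; by hypothesis both are derivable, and $\mathsf{L}\vvee$ then yields $\Gamma_n\seq\Delta$. If on the other hand $\Gamma_n$ contains no $\vvee$ at all, the strict inequality $n<|\Gamma|_{\scriptsize\vvee}$ guarantees by pigeonhole some index $y$ for which the set $\{0,\ldots,|\phi_y|_{\scriptsize\vvee}-1\}\setminus\{j_x:k_x=y\}$ is nonempty; picking any $j$ in this set, the substitution $[L/j]_y$ acts vacuously on $\Gamma_n$, so $\Gamma_n\in\PPP\RRR_{n+1}(\Gamma)$ and is derivable directly by the hypothesis.

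The main obstacle I anticipate is precisely the degenerate sub-case of \ref{di:lemma:partial_resolution_derivations_ii} in which every $\vvee$ in $\Gamma_n$ has already been wiped out by an earlier substitution at a higher position: here $\mathsf{L}\vvee$ is not directly available, and one must verify that padding the substitution sequence with a vacuous bookkeeping step still yields a legitimate element of $\PPP\RRR_{n+1}(\Gamma)$. Once one accepts that vacuous substitutions are permitted by the definition (which is how the inequality $n<|\Gamma|_{\scriptsize\vvee}$ is used), everything else is essentially routine.
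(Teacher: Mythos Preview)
Your proposal is correct and follows the same route as the paper, which simply cites the definition of partial resolutions together with Lemma~\ref{di:lemma:deep_inference_rules_with_partial_resolutions}. You have usefully spelled out the vacuous-substitution corner cases that the paper leaves implicit; these do need to be handled, and your pigeonhole argument for the degenerate sub-case of \ref{di:lemma:partial_resolution_derivations_ii} is the right way to do it.
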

\begin{proof}
    Follows by the definition of partial resolutions together with Lemma \ref{di:lemma:deep_inference_rules_with_partial_resolutions}.
\end{proof}

\begin{theorem}[Cutfree completeness] \label{di:theorem:cutfree_completeness}
 $ \Gamma \models \bigvee \Delta$ implies $\vdash_{\scriptsize \mathsf{GT}^-} \Gamma \seq \Delta$.
\end{theorem}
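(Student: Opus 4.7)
The plan is to combine the resolution theorem with cutfree classical completeness at the ``leaves'' of the partial resolution trees, and then to climb back up both sides of the sequent using Lemma \ref{di:lemma:partial_resolution_derivations}.

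First, I would observe that the recursive definition of $\RRR$ on $\vee$ gives $\RRR(\bigvee \Delta) = \{\bigvee \Lambda \mid \Lambda \in \RRR(\Delta)\}$. Applying the resolution theorem (Theorem \ref{di:theorem:resolution}) to $\Gamma \models \bigvee \Delta$ then yields: for each $\Xi \in \RRR(\Gamma)$ there exists $\Lambda \in \RRR(\Delta)$ with $\Xi \models \bigvee \Lambda$. Since $\Xi$ and $\Lambda$ consist entirely of classical formulas, cutfree classical completeness (Corollary \ref{di:coro:cutfree_classical_completeness}) hands us $\vdash_{\mathsf{G3cp}^-} \Xi \seq \Lambda$, and hence $\vdash_{\mathsf{GT}^-} \Xi \seq \Lambda$.

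Next, I would invoke Proposition \ref{di:prop:partial_resolutions_are_resolutions} to recast the situation in terms of partial resolutions: we have shown that for every $\Xi \in \PPP\RRR_{|\Gamma|_{\vvee}}(\Gamma)$ there is some $\Delta^{*} \in \PPP\RRR_{|\Delta|_{\vvee}}(\Delta)$ with $\vdash_{\mathsf{GT}^-} \Xi \seq \Delta^{*}$. I would then apply Lemma \ref{di:lemma:partial_resolution_derivations}\ref{di:lemma:partial_resolution_derivations_i} iteratively, for fixed $\Xi$, to descend the partial resolution tree of $\Delta$ from degree $|\Delta|_{\vvee}$ down to degree $0$; since $\PPP\RRR_0(\Delta) = \{\Delta\}$, this gives $\vdash_{\mathsf{GT}^-} \Xi \seq \Delta$ for every $\Xi \in \RRR(\Gamma)$. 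A dual iterated application of Lemma \ref{di:lemma:partial_resolution_derivations}\ref{di:lemma:partial_resolution_derivations_ii} on the antecedent side---descending from $\PPP\RRR_{|\Gamma|_{\vvee}}(\Gamma)$ to $\PPP\RRR_0(\Gamma) = \{\Gamma\}$---then delivers $\vdash_{\mathsf{GT}^-} \Gamma \seq \Delta$, as required.

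The heavy conceptual lifting is already packaged into Lemma \ref{di:lemma:partial_resolution_derivations} and the resolution theorem, so I expect no significant obstacle: the proof amounts to a short two-sided induction along the partial resolution trees, with $\mathsf{G3cp}^-$ supplying the base cases. The one point demanding a moment of care is making sure the resolution theorem, whose succedent is a single formula, is correctly applied to the multiset succedent $\Delta$ via the identity $\RRR(\bigvee \Delta) = \{\bigvee \Lambda \mid \Lambda \in \RRR(\Delta)\}$ used above.
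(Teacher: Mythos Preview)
Your proposal is correct and follows essentially the same approach as the paper's own proof: apply the resolution theorem to obtain classical leaves, invoke cutfree classical completeness there, then climb back using Lemma~\ref{di:lemma:partial_resolution_derivations}\ref{di:lemma:partial_resolution_derivations_i} on the succedent and Lemma~\ref{di:lemma:partial_resolution_derivations}\ref{di:lemma:partial_resolution_derivations_ii} on the antecedent. The paper handles the passage from $\RRR(\bigvee\Delta)$ to $\RRR(\Delta)$ with a brief ``i.e.'' rather than your explicit identity, but the argument is otherwise identical.
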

\begin{proof}
    By the resolution theorem (Theorem \ref{di:theorem:resolution}), for each $\Xi\in \RRR(\Gamma)$, there is some $\alpha\in \RRR(\bigvee \Delta)$ such that $\Xi \models \alpha$, i.e., there is some $\Lambda\in \RRR(\Delta)$ such that $\bigwedge \Xi \models \bigvee \Lambda$. By cutfree classical completeness (Corollary \ref{di:coro:cutfree_classical_completeness}), also
    $\vdash_{\scriptsize \mathsf{GT}^-}\Xi\seq \Lambda$. So by Proposition \ref{di:prop:partial_resolutions_are_resolutions}, for each $\Xi\in \PPP\RRR_{|\Gamma|_{\tiny\vvee}}(\Gamma)$, there is some $\Lambda \in \PPP\RRR_{|\Delta|_{\tiny\vvee}}( \Delta)$ such that $\vdash_{\scriptsize \mathsf{GT}^-}\Xi\seq \Lambda$. By repeated applications of Lemma \ref{di:lemma:partial_resolution_derivations} \ref{di:lemma:partial_resolution_derivations_i}, for each $\Xi\in \PPP\RRR_{|\Gamma|_{\tiny\vvee}}(\Gamma)$, $\vdash_{\scriptsize \mathsf{GT}^-}\Xi\seq \Delta$. By repeated applications of Lemma \ref{di:lemma:partial_resolution_derivations} \ref{di:lemma:partial_resolution_derivations_ii}, $\vdash_{\scriptsize \mathsf{GT}^-}\Gamma\seq \Delta$. 
\end{proof}

\section{Cut Elimination and Derivation Normal Form} \label{di:section:cut_elimination}
In this section, we prove a normal form result for cutfree derivations and use this normal form result to describe how the cut elimination procedure for $\mathsf{G3cp}$ yields one for $\mathsf{GT}$.

Let us introduce some standard terminology. The \emph{level} of a cut (i.e., an application of $\mathsf{Cut}$) is the sum of the heights of the deductions of its premises. The \emph{rank} of a cut on $\phi$ is $|\phi|$, the number of symbols in $\phi$. The \emph{cutrank} $cr(\mathcal{D})$ of a deduction $\mathcal{D}$ is the maximum of the ranks of the cutformulas occurring in $\mathcal{D}$.

The standard approach to cut elimination fails for our system due to the syntactic restrictions on the rules. Consider, for instance, the following step in a standard-style cut elimination attempt: the rightmost cut of maximal rank and maximal level among cuts of the same rank is such that neither premise is an axiom; the cutformula is not principal
on the left; the final rule applied to get the left premise is $\mathsf{L}\vee$. In the setting of $\mathsf{G3cp}$, this cut would look as follows:
\[ 
 \AxiomC{$\mathcal{D}_1$}
 \noLine
 \UnaryInfC{$\Xi,\beta_1 \seq \alpha,\Lambda$}
 \AxiomC{$\mathcal{D}_2$}
 \noLine
 \UnaryInfC{$\Xi,\beta_2 \seq \alpha,\Lambda$}
 \RightLabel{$\footnotesize\mathsf{L}\vee$}
 \BinaryInfC{$\Xi,\beta_1\vee\beta_2 \seq \alpha,\Lambda$}
 \AxiomC{$\mathcal{D}_3$}
 \noLine
 \UnaryInfC{$\Theta,\alpha\seq\Omega$}
 \RightLabel{$\footnotesize \mathsf{Cut}$}
 \BinaryInfC{$\Xi,\Theta,\beta_1\vee\beta_2 \seq \Lambda,\Omega$}
 \DisplayProof
\]
In the cut elimination procedure for $\mathsf{G3cp}$, we would commute the cut upwards to obtain a derivation with the same cutrank as before ($|\alpha|$), but with either a lower maximal level among cuts of rank $|\alpha|$, or one fewer cut of maximal level of rank $|\alpha|$:
\[
 \AxiomC{$\mathcal{D}_1$}
 \noLine
 \UnaryInfC{$\Xi,\beta_1 \seq \alpha,\Lambda$}
  \AxiomC{$\mathcal{D}_3$}
 \noLine
 \UnaryInfC{$\Theta,\alpha\seq\Omega$}
 \RightLabel{$\footnotesize \mathsf{Cut}$}
 \BinaryInfC{$\Xi,\Theta,\beta_1 \seq \Lambda,\Omega$}
 \AxiomC{$\mathcal{D}_2$}
 \noLine
 \UnaryInfC{$\Xi,\beta_2 \seq \alpha,\Lambda$}
  \AxiomC{$\mathcal{D}_3$}
 \noLine
 \UnaryInfC{$\Theta,\alpha\seq\Omega$}
 \RightLabel{$\footnotesize \mathsf{Cut}$}
 \BinaryInfC{$\Xi,\Theta,\beta_2 \seq \Lambda,\Omega$}
 \RightLabel{$\footnotesize\mathsf{L}\vee$}
 \BinaryInfC{$\Xi,\Theta,\beta_1\vee\beta_2 \seq \Lambda,\Omega$}
 \DisplayProof
\]

In the general setting of $\mathsf{GT}$, the cut would look as follows (in the case in which the cutformula $\alpha$ is not part of the right context set $\Delta$ generated by the implicit weakening):
\[
 \AxiomC{$\mathcal{D}_1$}
 \noLine
 \UnaryInfC{$\Gamma,\psi_1 \seq \alpha,\Lambda$}
 \AxiomC{$\mathcal{D}_2$}
 \noLine
 \UnaryInfC{$\Gamma,\psi_2 \seq \alpha,\Lambda$}
 \RightLabel{$\footnotesize\mathsf{L}\vee$}
 \BinaryInfC{$\Gamma,\psi_1\vee\psi_2 \seq \alpha,\Lambda,\Delta$}
 \AxiomC{$\mathcal{D}_3$}
 \noLine
 \UnaryInfC{$\Pi,\alpha\seq\Sigma$}
 \RightLabel{$\footnotesize \mathsf{Cut}$}
 \BinaryInfC{$\Gamma,\Pi,\psi_1\vee\psi_2 \seq \Lambda,\De,\Sigma$}
 \DisplayProof
\]
We cannot freely commute the cut upwards because $\Sigma$ might not be classical. Attempting to do so would yield the following:
\[
 \AxiomC{$\mathcal{D}_1$}
 \noLine
 \UnaryInfC{$\Gamma,\psi_1 \seq \alpha,\Lambda$}
  \AxiomC{$\mathcal{D}_3$}
 \noLine
 \UnaryInfC{$\Pi,\alpha\seq\Sigma$}
 \RightLabel{$\footnotesize \mathsf{Cut}$}
 \BinaryInfC{$\Gamma,\Pi,\psi_1 \seq \Lambda,\Sigma$}
 \AxiomC{$\mathcal{D}_2$}
 \noLine
 \UnaryInfC{$\Gamma,\psi_2 \seq \alpha,\Lambda$}
  \AxiomC{$\mathcal{D}_3$}
 \noLine
 \UnaryInfC{$\Pi,\alpha\seq\Sigma$}
 \RightLabel{$\footnotesize \mathsf{Cut}$}
 \BinaryInfC{$\Gamma,\Pi,\psi_2 \seq \Lambda,\Sigma$}
 \RightLabel{$\footnotesize\#\mathsf{L}\vee$}
 \BinaryInfC{$\Gamma,\Pi,\psi_1\vee\psi_2 \seq \Lambda,\De,\Sigma$}
 \DisplayProof
\]
Here the application of $\mathsf{L}\vee$ is not legitimate because the right context set $\Lambda,\Sigma$ of the premises might not be classical.

We show instead that each cut can be transformed into a cut on classical sequents. This follows from a derivation normal form theorem for the cutfree system: any cutfree derivation of $\Ga\seq \De$ can be transformed into a cutfree derivation in which one first derives classical sequents whose antecedents are the resolutions of $\Ga$ and whose succedents are resolutions of $\De$, and then applies the deep-inference rules to derive $\Ga\seq\De$. (Note the similarity between this theorem and \emph{decomposition theorems} in the literature on the calculus of structures \cite{guglielmi,guglielmistrassburger2001,brunnlerthesis}.) Before stating the theorem, we give an example.

\begin{example}
Theorem \ref{di:theorem:normal_form} transforms the derivation
\[
\AxiomC{$\mathcal{D}_1$}
\noLine
\UnaryInfC{$x , \lnot x \vee (\lnot q \vee p ),q\seq  p$}
 \RightLabel{\footnotesize $\mathsf{R}\vvee$}
\UnaryInfC{$x , \lnot x \vee (\lnot q \vee p ),q\seq  p\vvee r$}
\AxiomC{$\mathcal{D}_2$}
\noLine
\UnaryInfC{$x , \lnot x \vee (\lnot q \vee r),q\seq   r$}
 \RightLabel{\footnotesize $\mathsf{R}\vvee$}
\UnaryInfC{$x , \lnot x \vee (\lnot q \vee r),q\seq  p \vvee r$}
 \RightLabel{\footnotesize $\mathsf{L}\vvee$}
\BinaryInfC{$x , \lnot x \vee (\lnot q \vee(p\vvee r)),q\seq p\vvee r $}
 \RightLabel{\footnotesize $\mathsf{R}\lnot$}
\UnaryInfC{$x , \lnot x \vee (\lnot q \vee(p\vvee r))\seq p\vvee r, \lnot q $}
 \RightLabel{\footnotesize $\mathsf{R}\vee$}
\UnaryInfC{$x , \lnot x \vee (\lnot q \vee(p\vvee r))\seq (p\vvee r)\vee \lnot q $}
 \RightLabel{\footnotesize $\mathsf{L}\land$}
\UnaryInfC{$x \land( \lnot x \vee( \lnot q\vee (p\vvee r)))\seq (p\vvee r)\vee \lnot q$}
 \DisplayProof
\]
into the derivation
\[
\AxiomC{$\mathcal{D}_1$}
\noLine
\UnaryInfC{$x , \lnot x \vee (\lnot q \vee p ),q\seq  p$}
 \RightLabel{\footnotesize $\mathsf{R}\lnot$}
\UnaryInfC{$x , \lnot x \vee (\lnot q \vee p )\seq  p,\lnot q$}
 \RightLabel{\footnotesize $\mathsf{R}\vee$}
\UnaryInfC{$x , \lnot x \vee (\lnot q \vee p )\seq  p\vee\lnot q$}
 \RightLabel{\footnotesize $\mathsf{L}\land $}
\UnaryInfC{$x \land (\lnot x \vee (\lnot q \vee p ))\seq  p\vee\lnot q$}
 \RightLabel{\footnotesize $\mathsf{R}\vvee$}
\UnaryInfC{$x \land (\lnot x \vee (\lnot q \vee p ))\seq  (p\vvee r)\vee\lnot q$}
\AxiomC{$\mathcal{D}_2$}
\noLine
\UnaryInfC{$x , \lnot x \vee (\lnot q \vee r ),q\seq  r$}
 \RightLabel{\footnotesize $\mathsf{R}\lnot$}
\UnaryInfC{$x , \lnot x \vee (\lnot q \vee r )\seq  r,\lnot q$}
 \RightLabel{\footnotesize $\mathsf{R}\vee$}
\UnaryInfC{$x , \lnot x \vee (\lnot q \vee r )\seq  r\vee\lnot q$}
 \RightLabel{\footnotesize $\mathsf{L}\land $}
\UnaryInfC{$x \land (\lnot x \vee (\lnot q \vee r ))\seq  r\vee\lnot q$}
 \RightLabel{\footnotesize $\mathsf{R}\vvee$}
\UnaryInfC{$x \land (\lnot x \vee (\lnot q \vee r ))\seq  (p\vvee r)\vee\lnot q$}
 \RightLabel{\footnotesize $\mathsf{L}\vvee$}
\BinaryInfC{$x \land (\lnot x \vee (\lnot q \vee (p\vvee r) ))\seq  (p\vvee r)\vee\lnot q $}
 \DisplayProof
\]
where notice that $\mathcal{R}(x \land (\lnot x \vee (\lnot q \vee (p\vvee r) )))=\{x \land (\lnot x \vee (\lnot q \vee p )),x \land (\lnot x \vee (\lnot q \vee r ))\}$ and that $\mathcal{R}((p\vvee r)\vee\lnot q)=\{p\vee\lnot q, r\vee\lnot q\}$.
\end{example}

\begin{theorem}[Derivation normal form]\label{di:theorem:normal_form} There is an effective procedure (that is height-preserving) transforming any derivation witnessing $\vdash_{\scriptsize \mathsf{GT}^-}\Ga\seq \De$ into a derivation witnessing
\begin{align*}
&\vdash_{\scriptsize {\mathsf{G3cp}}^-}\bigdoublewedge_{\Xi\in \RRR(\Gamma)}(\Xi\seq f[\Xi])\vdash_{\scriptsize \mathsf{R}\vvee}\bigdoublewedge_{\Xi\in \RRR(\Gamma)}(\Xi\seq \De)\vdash_{\scriptsize \mathsf{L}\vvee}\Ga\seq \De,
\end{align*}
where $f:\RRR(\Ga)\to \RRR(\De)$. We say that a derivation of $\Ga\seq \De$ of the form above is in \emph{normal form}.
\end{theorem}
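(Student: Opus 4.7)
My plan is to prove the theorem by induction on the height $n$ of the given cutfree derivation $\mathcal{D}$ of $\Ga \seq \De$, transforming $\mathcal{D}$ into a derivation in normal form.

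For the base case, $\mathcal{D}$ is a single axiom. For $\mathsf{At}$ giving $\Ga', p \seq p, \De$, I would fix any $\Lambda_0 \in \RRR(\De)$ and note that for each $\Xi \in \RRR(\Ga')$, the sequent $\Xi, p \seq p, \Lambda_0$ is itself an $\mathsf{At}$ instance in $\mathsf{G3cp}^-$; applications of $\mathsf{R}\vvee$ then lift $\Lambda_0$ to $\De$, and applications of $\mathsf{L}\vvee$ fold the $\Xi$'s into $\Ga'$. The $\mathsf{L}\bot$ case is analogous.

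For the inductive step, let $R$ be the last rule of $\mathcal{D}$; by IH, the premises admit normal-form derivations. If $R = \mathsf{L}\vvee$, then applying $R$ at the bottom of the combined premise normal forms simply extends the bottom $\mathsf{L}\vvee$ phase, using the identity $\RRR(\Ga, \chi\{\phi_L \vvee \phi_R\}) = \RRR(\Ga, \chi\{\phi_L\}) \cup \RRR(\Ga, \chi\{\phi_R\})$. If $R = \mathsf{R}\vvee$, I would permute $R$ upward past the bottom $\mathsf{L}\vvee$ phase---they commute freely, as they operate on disjoint parts of the sequent; concretely, this amounts to appending one additional $\mathsf{R}\vvee$ application to the middle phase of each branch of the $\mathsf{L}\vvee$ tree. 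If $R$ is a classical rule, I would apply $R$ (viewed as a rule of $\mathsf{G3cp}^-$) to the classical tops of the premises' normal forms, using the implicit weakening of $\mathsf{R}\wedge$ and $\mathsf{L}\vee$ to accommodate any additional right-context $\De'$ by picking resolutions $\nu_\Xi \in \RRR(\De')$; the resulting $\mathsf{G3cp}^-$ derivations of $\Xi \seq g[\Xi]$ for $\Xi \in \RRR(\Ga)$ with $g[\Xi] \in \RRR(\De)$ serve as the new top phase, from which the $\mathsf{R}\vvee$ and $\mathsf{L}\vvee$ phases are rebuilt in the manner described for the previous cases.

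The main obstacle is the bookkeeping in the classical-rule case: for each $\Xi \in \RRR(\Ga)$, I must match the correct resolutions of the premises' antecedents so that combining via $R$ yields a classical sequent whose succedent is a resolution of the conclusion's succedent---this relies on the distributive behavior of $\RRR$ with respect to $\wedge$ and $\vee$. A pleasing side effect is that the syntactic restrictions on $\mathsf{R}\wedge$ and $\mathsf{L}\vee$---requiring the premise right-context to be classical---never pose an obstacle, since these rules are only applied to the entirely classical top $\mathsf{G3cp}^-$ phase. Height preservation follows in each case by direct inspection.
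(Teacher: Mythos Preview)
Your proposal is correct and shares the paper's inductive skeleton, but organizes the inductive step differently. The paper proceeds via local permutation lemmas---showing that $\mathsf{R}\vvee$ commutes below each $\mathsf{G3cp}$-rule and that $\mathsf{L}\vvee$ commutes below every other rule---and lets the normal form emerge by iterating these one-step permutations. You instead assume the premises are already normalized (by IH) and, when $R$ is classical, apply $R$ directly at the classical tops and rebuild the $\vvee$-phases from scratch. This trades the paper's uniform small-step cases for heavier bookkeeping about how $\RRR$ distributes over $\land$ and $\lor$ (which you correctly flag), but in return avoids the paper's somewhat delicate use of the induction hypothesis in two-premise cases (e.g.\ ensuring the other premise $\mathcal{D}_3$ ends in the ``matching'' $\mathsf{L}\vvee$ before permuting past $\mathsf{R}\land$). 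One small imprecision: ``rebuilt in the manner described for the previous cases'' does not quite fit, since your $\mathsf{L}\vvee$ and $\mathsf{R}\vvee$ cases \emph{extend} existing phases rather than rebuild them; but the intended construction---iterated applications of the deep-inference rules taking resolutions back to the original multisets---is clear and correct.
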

\begin{proof}
It suffices to show that applications of $\mathsf{R}\vvee$ can be commuted below applications of the logical rules of $\mathsf{G3cp}$, and that applications $\mathsf{L}\vvee$ can be commuted below applications of all other rules (in a way that preserves height); it is then easy to see that the rest follows by the definition of resolutions. We show this by induction on the height $n$ of derivations. If $n=1$, the derivation is already in normal form. We now assume the result holds for $n$ and prove that it holds for $n+1$. We only explicitly show some cases; the rest of the cases are similar. Note that in most of the cases we do not need to make use of the induction hypothesis. In most cases below we write only the relevant rule applications, and omit the rest of the derivation.

We show that the $\vvee$-rules can be commuted below $\mathsf{R}\land$.

$\mathsf{R}\vvee$: The principal formula of $\mathsf{R}\vvee$ cannot be a side formula of $\mathsf{R}\land$ since the former must be nonclassical and the latter classical. It is therefore an active formula of $\mathsf{R}\land$, and we may assume that the relevant part of $\mathcal{D}$ is of the form:
\[ 
     \AxiomC{$\Ga\seq \chi\{\phi_i\},\Lambda$}
 \RightLabel{{\footnotesize $\mathsf{R}\vvee$}}
 \UnaryInfC{$\Ga\seq\chi\{\phi_L\vvee\phi_R\},\Lambda$}
     \AxiomC{$\Ga\seq \psi,\Lambda$}
 \RightLabel{{\footnotesize $\mathsf{R}\land$}}
 \BinaryInfC{$\Ga\seq \chi\{\phi_L\vvee\phi_R\}\land \psi,\Lambda,\De$}
 \DisplayProof
\]
This is transformed into:
\[ 
     \AxiomC{$\Ga\seq \chi\{\phi_i\},\Lambda$}
     \AxiomC{$\Ga\seq \psi,\Lambda$}
 \RightLabel{{\footnotesize $\mathsf{R}\land$}}
 \BinaryInfC{$\Ga\seq \chi\{\phi_i\}\land \psi,\Lambda,\De$}
 \RightLabel{{\footnotesize $\mathsf{R}\vvee$}}
 \UnaryInfC{$\Ga\seq\chi\{\phi_L\vvee\phi_R\}\land \psi,\Lambda,\De$}
 \DisplayProof
\]

$\mathsf{L}\vvee$: The relevant part of $\mathcal{D}$ is of the form:
\[
     \AxiomC{$\mathcal{D}_1$}
     \noLine
     \UnaryInfC{$\Ga,\chi\{\phi_L\}\seq \phi,\Lambda$}
     \AxiomC{$\mathcal{D}_2$}
     \noLine
     \UnaryInfC{$\Ga,\chi\{\phi_R\}\seq \phi,\Lambda$}
 \RightLabel{{\footnotesize $\mathsf{L}\vvee$}}
 \BinaryInfC{$\Ga,\chi\{\phi_L\vvee\phi_R\}\seq \phi,\Lambda$}
     \AxiomC{$\mathcal{D}_3$}
     \noLine
     \UnaryInfC{$\Ga,\chi\{\phi_L\vvee\phi_R\}\seq \psi,\Lambda$}
 \RightLabel{{\footnotesize $\mathsf{R}\land$}}
 \BinaryInfC{$\Ga,\chi\{\phi_L\vvee\phi_R\}\seq \phi\land \psi,\Lambda,\De$}
 \DisplayProof
\]
By the induction hypothesis, we may assume that either $\mathcal{D}_3$ consists of a single axiom; or the final rule applied in $\mathcal{D}_3$ is $\mathsf{L}\vvee$, the principal formula of the final rule application is $\chi\{\phi_L\vvee\phi_R\}$, and the principal subformula is $\phi_L\vvee\phi_R$. In the former case, $\Ga,\chi\{\phi_L\}\seq \psi,\Lambda$ and $\Ga,\chi\{\phi_R\}\seq \psi,\Lambda$ are also axioms, and the above is transformed into:
\[
\AxiomC{$\mathcal{D}_1$}
     \noLine
     \UnaryInfC{$\Ga,\chi\{\phi_L\}\seq \phi,\Lambda$}
     \AxiomC{$\Ga,\chi\{\phi_L\}\seq \psi,\Lambda$}
 \RightLabel{{\footnotesize $\mathsf{R}\land$}}
 \BinaryInfC{$\Ga,\chi\{\phi_L\}\seq \phi \land\psi,\Lambda,\De$}
     \AxiomC{$\mathcal{D}_2$}
     \noLine
     \UnaryInfC{$\Ga,\chi\{\phi_R\}\seq \phi,\Lambda$}
     \AxiomC{$\Ga,\chi\{\phi_R\}\seq \psi,\Lambda$}
 \RightLabel{{\footnotesize $\mathsf{R}\land$}}
 \BinaryInfC{$\Ga,\chi\{\phi_R\} \seq \phi\land \psi,\Lambda,\De$}
 \RightLabel{{\footnotesize $\mathsf{L}\vvee$}}
 \BinaryInfC{$\Ga,\chi\{\phi_L\vvee\phi_R\}\seq \phi\land \psi,\Lambda,\De$}
 \DisplayProof
\]
In the latter case, we have:
\[
\AxiomC{$\mathcal{D}_1$}
     \noLine
     \UnaryInfC{$\Ga,\chi\{\phi_L\}\seq \phi,\Lambda$}
     \AxiomC{$\mathcal{D}_2$}
     \noLine
     \UnaryInfC{$\Ga,\chi\{\phi_R\}\seq \phi,\Lambda$}
 \RightLabel{{\footnotesize $\mathsf{L}\vvee$}}
 \BinaryInfC{$\Ga,\chi\{\phi_L\vvee\phi_R\}\seq \phi,\Lambda$}
     \AxiomC{$\mathcal{D}_4$}
     \noLine
     \UnaryInfC{$\Ga,\chi\{\phi_L\}\seq \psi,\Lambda$}
     \AxiomC{$\mathcal{D}_5$}
     \noLine
      \UnaryInfC{$\Ga,\chi\{\phi_R\}\seq \psi,\Lambda$}
 \RightLabel{{\footnotesize $\mathsf{L}\vvee$}}
 \BinaryInfC{$\Ga,\chi\{\phi_L\vvee\phi_R\}\seq \psi,\Lambda$}
 \RightLabel{{\footnotesize $\mathsf{R}\land$}}
 \BinaryInfC{$\Ga,\chi\{\phi_L\vvee\phi_R\}\seq \phi\land \psi,\Lambda,\De$}
 \DisplayProof
\]
This is transformed into:
\[
\AxiomC{$\mathcal{D}_1$}
     \noLine
     \UnaryInfC{$\Ga,\chi\{\phi_L\}\seq \phi,\Lambda$}
     \AxiomC{$\mathcal{D}_4$}
     \noLine
     \UnaryInfC{$\Ga,\chi\{\phi_L\}\seq \psi,\Lambda$}
 \RightLabel{{\footnotesize $\mathsf{R}\land$}}
 \BinaryInfC{$\Ga,\chi\{\phi_L\}\seq \phi \land\psi,\Lambda,\De$}
     \AxiomC{$\mathcal{D}_2$}
     \noLine
     \UnaryInfC{$\Ga,\chi\{\phi_R\}\seq \phi,\Lambda$}
     \AxiomC{$\mathcal{D}_5$}
     \noLine
      \UnaryInfC{$\Ga,\chi\{\phi_R\}\seq \psi,\Lambda$}
 \RightLabel{{\footnotesize $\mathsf{R}\land$}}
 \BinaryInfC{$\Ga,\chi\{\phi_R\} \seq \phi\land \psi,\Lambda,\De$}
 \RightLabel{{\footnotesize $\mathsf{L}\vvee$}}
 \BinaryInfC{$\Ga,\chi\{\phi_L\vvee\phi_R\}\seq \phi\land \psi,\Lambda,\De$}
 \DisplayProof
\]
We show that $\mathsf{L}\vvee$ can be commuted below $\mathsf{R}\vvee$. The relevant part of $\mathcal{D}$ is of the form:
    \[ 
     \AxiomC{$\Ga,\chi\{\phi_L\}\seq \eta\{\psi_i\},\De$}
     \AxiomC{$\Ga,\chi\{\phi_R\}\seq  \eta\{\psi_i\},\De$}
 \RightLabel{{\footnotesize $\mathsf{L}\vvee$}}
 \BinaryInfC{$\Ga,\chi\{\phi_L\vvee\phi_R\}\seq \eta\{\psi_i\},\De$}
 \RightLabel{{\footnotesize $\mathsf{R}\vvee$}}
 \UnaryInfC{$\Ga ,\chi\{\phi_L\vvee\phi_R\}\seq\eta\{\psi_L\vvee\psi_R\},\De$}
 \DisplayProof
\]
This is transformed into:
    \[ 
     \AxiomC{$\Ga,\chi\{\phi_L\}\seq \eta\{\psi_i\},\De$}
 \RightLabel{{\footnotesize $\mathsf{R}\vvee$}}
 \UnaryInfC{$\Ga ,\chi\{\phi_L\}\seq\eta\{\psi_L\vvee\psi_R\},\De$}
     \AxiomC{$\Ga,\chi\{\phi_R\}\seq  \eta\{\psi_i\},\De$}
 \RightLabel{{\footnotesize $\mathsf{R}\vvee$}}
 \UnaryInfC{$\Ga ,\chi\{\phi_R\}\seq\eta\{\psi_L\vvee\psi_R\},\De$}
 \RightLabel{{\footnotesize $\mathsf{L}\vvee$}}
 \BinaryInfC{$\Ga ,\chi\{\phi_L\vvee\phi_R\}\seq\eta\{\psi_L\vvee\psi_R\},\De$}
 \DisplayProof
\]

We show that applications of $\mathsf{L}\vvee$ can be commuted around one another.

Case 1: The principal formula of the first application is a side formula of the second application. We may assume that the relevant part of $\mathcal{D}$ is of the form:
    \[
     \AxiomC{$\Ga,\chi\{\phi_L\},\eta\{\psi_L\}\seq\De$}
     \AxiomC{$\Ga,\chi\{\phi_R\},\eta\{\psi_L\}\seq\De$}
 \RightLabel{{\footnotesize $\mathsf{L}\vvee$}}
 \BinaryInfC{$\Ga,\chi\{\phi_L\vvee\phi_R\},\eta\{\psi_L\}\seq \De$}
 \AxiomC{$\Ga,\chi\{\phi_L\vvee\phi_R\},\eta\{\psi_R\}\seq \De$}
 \RightLabel{{\footnotesize $\mathsf{L}\vvee$}}
 \BinaryInfC{$\Ga ,\chi\{\phi_L\vvee\phi_R\},\eta\{\psi_L\vvee\psi_R\}\seq\De$}
 \DisplayProof
\]
This case is clearly analogous to the $\mathsf{L}\vvee$-subcase of $\mathsf{R}\land$.

Case 2: The principal formula $\chi$ of the first application is an active formula of the second application (we assume without loss of generality that $\chi$ contains $\psi_L$, where $\psi_L\vvee \psi_R$ is the active subformula of the second application). There are three subcases.

Case 2.1: $\chi=\chi\{\phi_L\vvee\phi_R\}\{\psi_L\}$ (see Section \ref{di:section:inversion_etc} for the definition of this notation). The relevant part of $\mathcal{D}$ is of the form:
    \[
     \AxiomC{$\Ga,\chi\{\phi_L\}\{\psi_L\}\seq\De$}
     \AxiomC{$\Ga,\chi\{\phi_R\}\{\psi_L\}\seq\De$}
 \RightLabel{{\footnotesize $\mathsf{L}\vvee$}}
 \BinaryInfC{$\Ga,\chi\{\phi_L\vvee\phi_R\}\{\psi_L\}\seq \De$}
 \RightLabel{{\footnotesize $\mathsf{L}\vvee$}}
 \AxiomC{$\Ga,\chi\{\phi_L\vvee\phi_R\}\{\psi_R\}\seq \De$}
 \BinaryInfC{$\Ga ,\chi\{\phi_L\vvee\phi_R\}\{\psi_L\vvee\psi_R\}\seq\De$}
 \DisplayProof
\] 
This case is clearly analogous to the $\mathsf{L}\vvee$-subcase of $\mathsf{R}\land$.

Case 2.2: $\chi=\chi\{(\phi_L\vvee\phi_R)\{\psi_L\}\}$ (we assume without loss of generality that $\chi=\chi\{\phi_L\{\psi_L\}\vvee\phi_R\}$). The relevant part of $\mathcal{D}$ is of the form: 
   \[
     \AxiomC{$\Ga,\chi\{\phi_L\{\psi_L\}\}\seq\De$}
     \AxiomC{$\Ga,\chi\{\phi_R\}\seq\De$}
 \RightLabel{{\footnotesize $\mathsf{L}\vvee$}}
 \BinaryInfC{$\Ga,\chi\{\phi_L\{\psi_L\}\vvee\phi_R\}\seq \De$}
 \AxiomC{$\Ga,\chi\{\phi_L\{\psi_R\}\vvee\phi_R\}\seq \De$}
 \RightLabel{{\footnotesize $\mathsf{L}\vvee$}}
 \BinaryInfC{$\Ga ,\chi\{\phi_L\{\psi_L\vvee\psi_R\}\vvee\phi_R\}\seq\De$}
 \DisplayProof
\]
This case is similar to the $\mathsf{L}\vvee$-subcase of $\mathsf{R}\land$.

Case 2.3: $\chi=\chi\{\psi_L\{\phi_L\vvee\phi_R\}\}$. The relevant part of $\mathcal{D}$ is of the form: 
   \[
     \AxiomC{$\Ga,\chi\{\psi_L\{\phi_L\}\}\seq\De$}
     \AxiomC{$\Ga,\chi\{\psi_L\{\phi_R\}\}\seq\De$}
 \RightLabel{{\footnotesize $\mathsf{L}\vvee$}}
 \BinaryInfC{$\Ga,\chi\{\psi_L\{\phi_L\vvee\phi_R\}\}\seq \De$}
 \RightLabel{{\footnotesize $\mathsf{L}\vvee$}}
 \AxiomC{$\Ga,\chi\{\psi_R\}\seq \De$}
 \BinaryInfC{$\Ga ,\chi\{\psi_L\{\phi_L\vvee \phi_R\}\vvee\psi_R\}\seq\De$}
 \DisplayProof
\]
This is transformed into:
   \[
     \AxiomC{$\Ga,\chi\{\psi_L\{\phi_L\}\}\seq\De$}
 \AxiomC{$\Ga,\chi\{\psi_R\}\seq \De$}
 \RightLabel{{\footnotesize $\mathsf{L}\vvee$}}
 \BinaryInfC{$\Ga,\chi\{\psi_L\{\phi_L\}\vvee\psi_R\}\seq \De$}
     \AxiomC{$\Ga,\chi\{\psi_L\{\phi_R\}\}\seq\De$}
 \AxiomC{$\Ga,\chi\{\psi_R\}\seq \De$}
 \RightLabel{{\footnotesize $\mathsf{L}\vvee$}}
 \BinaryInfC{$\Ga,\chi\{\psi_L\{\phi_R\}\vvee\psi_R\}\seq \De$}
 \RightLabel{{\footnotesize $\mathsf{L}\vvee$}}
 \BinaryInfC{$\Ga ,\chi\{\psi_L\{\phi_L\vvee \phi_R\}\vvee\psi_R\}\seq\De$}
 \DisplayProof
\]
\end{proof}

    Observe that derivations in normal form have a stronger subformula property than that of cutfree derivations in general (Proposition \ref{di:prop:subformula_property}):

    \begin{proposition}[Normal form partial resolution subformula property] For any $\mathcal{D}$ in normal form witnessing $\vdash_{\scriptsize \mathsf{GT}^-}\Gamma \seq \De$, each formula occurring in $\mathcal{D}$ is a partial resolution or a subformula of a resolution of some formula occurring in $\Gamma,\Delta$.
\end{proposition}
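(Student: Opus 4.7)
The plan is to exploit the three-phase structure of a normal form derivation. By Theorem~\ref{di:theorem:normal_form}, a normal-form derivation $\mathcal{D}$ witnessing $\Gamma\seq\Delta$ decomposes into three successive stages: (i) $\mathsf{G3cp}^-$-derivations of the sequents $\Xi\seq f[\Xi]$ for each $\Xi\in\RRR(\Gamma)$; (ii) a block of $\mathsf{R}\vvee$-applications transforming each $\Xi\seq f[\Xi]$ into $\Xi\seq\Delta$; and (iii) a block of $\mathsf{L}\vvee$-applications collapsing the resulting sequents $\Xi\seq\Delta$ into $\Gamma\seq\Delta$. I would check the claimed property separately on each stage.

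For stage (i), the standard subformula property of $\mathsf{G3cp}$ gives that every formula appearing in the subderivation ending in $\Xi\seq f[\Xi]$ is a subformula of some formula in $\Xi\cup f[\Xi]$. Since $\Xi\in\RRR(\Gamma)$ and $f[\Xi]\in\RRR(\Delta)$, each formula in $\Xi$ (respectively $f[\Xi]$) is itself a resolution of some formula of $\Gamma$ (respectively $\Delta$), so every formula appearing in this stage is a subformula of a resolution of a formula occurring in $\Gamma,\Delta$, giving the second disjunct of the conclusion.

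For stage (ii), I would appeal to the partial-resolution reformulation of $\mathsf{R}\vvee$ provided by Lemma~\ref{di:lemma:deep_inference_rules_with_partial_resolutions}: each application takes a sequent with a succedent occurrence $\chi[\tau][i_n/j_n]$ to one with the less-resolved occurrence $\chi[\tau]$. Reading top-down, the succedent formulas start out as the full resolutions comprising $f[\Xi]$ (maximal-degree partial resolutions of formulas of $\Delta$) and end as the formulas of $\Delta$ itself (the degree-zero partial resolutions); every intermediate occurrence is a partial resolution of some formula in $\Delta$, giving the first disjunct of the conclusion. Meanwhile the antecedent $\Xi$ is carried through unchanged, so its formulas remain subformulas of resolutions of $\Gamma$. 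Stage (iii) is dual: by the partial-resolution reformulation of $\mathsf{L}\vvee$, the antecedent occurrences move from resolutions of $\Gamma$ to the formulas of $\Gamma$ through intermediate partial resolutions of formulas of $\Gamma$, while $\Delta$ is left unchanged.

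The main conceptual work lies in Theorem~\ref{di:theorem:normal_form} and Lemma~\ref{di:lemma:deep_inference_rules_with_partial_resolutions}; given those, the only point to verify is that each of the three stages of $\mathcal{D}$ is internally closed under the relevant class of formulas, which the reformulation of the deep-inference rules makes immediate. I therefore do not anticipate a genuine obstacle: assembling the three stages shows that each formula occurring in $\mathcal{D}$ either appears in stage (i) or in the unchanged context of a later stage (in which case it is a subformula of a resolution of some formula of $\Gamma,\Delta$) or is one of the $\vvee$-containing active/principal formulas produced in stages (ii) or (iii) (in which case it is a partial resolution of some formula of $\Gamma,\Delta$).
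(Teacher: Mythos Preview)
Your proposal is correct and is precisely the argument the paper has in mind: the paper states this proposition without proof, introducing it with ``Observe that\ldots'' immediately after the normal form theorem, so your three-stage analysis (subformula property of $\mathsf{G3cp}^-$ on stage (i), partial-resolution reformulation of $\mathsf{R}\vvee$ and $\mathsf{L}\vvee$ on stages (ii) and (iii)) simply spells out what the paper treats as evident from the structure of normal-form derivations.
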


We are now ready to prove our cut elimination theorem.

\begin{theorem}[Cut elimination] \label{di:theorem:cut_elimination}
 If $\mathcal{D}$ witnesses $\vdash_{\scriptsize \mathsf{GT}} \Ga \seq \De$, there is an effective procedure for transforming $\mathcal{D}$ into a derivation $\mathcal{D}'$ witnessing $\vdash_{\scriptsize \mathsf{GT}^-} \Ga \seq \De$.
\end{theorem}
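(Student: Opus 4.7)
The plan is to induct on the number of cut applications in $\mathcal{D}$ and, at each step, use the derivation normal form theorem (Theorem \ref{di:theorem:normal_form}) to push the cut elimination down into the classical subsystem $\mathsf{G3cp}$, where Theorem \ref{di:theorem:classical_cut_elimination} applies. The base case (no cuts) is immediate. For the inductive step, select an uppermost application of $\mathsf{Cut}$ in $\mathcal{D}$, so that the subderivations $\mathcal{D}_1$ of $\Ga\seq \phi,\De$ and $\mathcal{D}_2$ of $\Pi,\phi\seq \Sig$ ending at its premises are themselves cutfree.

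Apply Theorem \ref{di:theorem:normal_form} separately to $\mathcal{D}_1$ and $\mathcal{D}_2$. Noting that $\RRR(\Pi\cup\{\phi\})=\{\Omega\cup\{\beta\}\mid \Omega\in\RRR(\Pi),\,\beta\in\RRR(\phi)\}$, this produces cutfree $\mathsf{G3cp}^-$-derivations of $\Xi\seq \alpha_\Xi,\Lambda_\Xi$ for each $\Xi\in\RRR(\Ga)$, with $\alpha_\Xi\in\RRR(\phi)$ and $\Lambda_\Xi\in\RRR(\De)$, together with cutfree $\mathsf{G3cp}^-$-derivations of $\Omega,\beta\seq \Theta_{\Omega,\beta}$ for each $\Omega\in\RRR(\Pi)$ and each $\beta\in\RRR(\phi)$, with $\Theta_{\Omega,\beta}\in\RRR(\Sig)$ — followed by trailing blocks of $\mathsf{R}\vvee$- and $\mathsf{L}\vvee$-applications which recover $\Ga\seq \phi,\De$ and $\Pi,\phi\seq \Sig$ respectively.

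The crucial observation is that $\RRR(\phi)\subseteq \PL$, so each $\alpha_\Xi$ is classical. For every pair $(\Xi,\Omega)\in \RRR(\Ga)\times \RRR(\Pi)$ we may therefore form the classical cut
\[
\AxiomC{$\Xi\seq \alpha_\Xi,\Lambda_\Xi$}
\AxiomC{$\Omega,\alpha_\Xi\seq \Theta_{\Omega,\alpha_\Xi}$}
\RightLabel{{\footnotesize $\mathsf{Cut}$}}
\BinaryInfC{$\Omega,\Xi\seq \Lambda_\Xi,\Theta_{\Omega,\alpha_\Xi}$}
\DisplayProof
\]
which lies entirely within $\mathsf{G3cp}$; by Theorem \ref{di:theorem:classical_cut_elimination} each such cut is converted into a cutfree $\mathsf{G3cp}^-$-derivation. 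Since $\Lambda_\Xi\in \RRR(\De)$ and $\Theta_{\Omega,\alpha_\Xi}\in \RRR(\Sig)$, repeated applications of $\mathsf{R}\vvee$ then yield $\Omega,\Xi\seq \De,\Sig$ for every pair, and Lemma \ref{di:lemma:partial_resolution_derivations}(ii), applied to the multiset $\Pi\cup\Ga$ — whose resolutions are exactly the sets $\Omega\cup\Xi$ — combines these $\mathsf{L}\vvee$-wise into a single cutfree derivation of $\Pi,\Ga\seq \De,\Sig$. This replaces the selected cut by a cutfree block without introducing new cuts, and the induction hypothesis disposes of the rest.

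The main conceptual obstacle is the one flagged at the start of this section: the standard strategy of commuting cut upward past $\mathsf{L}\vee$ or $\mathsf{R}\land$ breaks down because those rules carry a classicality restriction on the premise right context that the succedent produced by the cut need not respect. The normal form theorem sidesteps this obstruction by rearranging each cutfree subderivation so that its classical core is exposed at the top; after this rearrangement, every cut we actually need to eliminate is on a classical resolution of $\phi$ and lies inside $\mathsf{G3cp}$, so no commutation past a context-restricted rule is ever required.
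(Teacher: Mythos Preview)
Your proof is correct and follows essentially the same approach as the paper: reduce to a single cut with cutfree premises, apply the derivation normal form theorem to both subderivations to expose classical cores, perform the cuts on classical resolutions inside $\mathsf{G3cp}$, eliminate them there via Theorem~\ref{di:theorem:classical_cut_elimination}, and reassemble with $\mathsf{R}\vvee$ and $\mathsf{L}\vvee$. The only cosmetic differences are that the paper phrases the outer induction as cutrank/level rather than number of cuts, and it reuses the $\vvee$-blocks from the normal-form derivations directly rather than invoking Lemma~\ref{di:lemma:partial_resolution_derivations}.
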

\begin{proof}
       By induction on the cutrank, with a subinduction on the level of cuts. It suffices to show that we can transform a derivation $\mathcal{D}$ ending in a cut
    \[ 
 \AxiomC{$\mathcal{D}_1$}
 \noLine
 \UnaryInfC{$\Ga \seq \phi,\De$}
 \AxiomC{$\mathcal{D}_2$}
 \noLine
 \UnaryInfC{$\Pi,\phi \seq \Sig$}
 \RightLabel{\footnotesize $\mathsf{Cut}$}
 \BinaryInfC{$\Ga,\Pi\seq \De,\Sig$}
 \DisplayProof
\]
where $\mathcal{D}_1$ and $\mathcal{D}_2$ are cutfree into a cutfree derivation. By Theorem \ref{di:theorem:normal_form}, we can replace $\mathcal{D}_1$, $\mathcal{D}_2$ with cutfree $\mathcal{D}'_1$, $\mathcal{D}'_2$ in normal form; i.e., $\mathcal{D}'_1$ witnesses
\begin{align*}
    &\vdash_{\scriptsize \mathsf{G3cp}^-}\bigdoublewedge_{\Xi\in \RRR(\Gamma)}(\Xi\seq f(\Xi))\vdash_{\scriptsize \mathsf{R}\vvee}\bigdoublewedge_{\Xi\in \RRR(\Gamma)}(\Xi\seq \phi,\De)\vdash_{\scriptsize \mathsf{L}\vvee}\Ga\seq \phi,\De,
\end{align*} where $f:\RRR(\Ga)\to \RRR(\phi,\De)$, and $\mathcal{D}'_2$ witnesses\footnote{Below we write $[\Theta,\alpha]\in \RRR(\Pi,\phi)$ to indicate that $\Theta\in \RRR(\Pi)$ and $\alpha\in \RRR(\phi)$, whence the multiset $\Theta,\alpha$ is in $\RRR(\Pi,\phi)$; and similarly for $[\Xi,\Theta]\in \RRR(\Ga,\Pi)$.}
\begin{align*}
   &\vdash_{\scriptsize \mathsf{G3cp}^-}\bigdoublewedge_{[\Theta,\alpha]\in \RRR(\Pi,\phi)}(\Theta,\alpha \seq g(\Theta,\alpha))\vdash_{\scriptsize \mathsf{R}\vvee}\bigdoublewedge_{[\Theta,\alpha]\in \RRR(\Pi,\phi)}(\Theta,\alpha\seq \Sigma)\vdash_{\scriptsize \mathsf{L}\vvee}\Pi,\phi\seq \Sigma, 
\end{align*}
where $g:\RRR(\Pi,\phi)\to \RRR(\Sigma)$, each $\Theta\in \RRR(\Pi)$, and each $\alpha\in \RRR(\phi)$.

Write $f(\Xi)=\alpha^{\Xi},\Lambda^{\Xi}\in \RRR(\phi,\De)$. For each $[\Xi,\Theta]\in \RRR(\Ga,\Pi)$, we can find a subderivation $\mathcal{D_1}^{\Xi}$ of $\mathcal{D}'_1$ witnessing $\vdash_{\scriptsize \mathsf{G3cp}^-}\Xi\seq \Lambda^{\Xi},\alpha^{\Xi}$ and a subderivation $\mathcal{D}^{\Xi,\Theta}$ of $\mathcal{D}'_2$ witnessing $\vdash_{\scriptsize \mathsf{G3cp}^-}\Theta,\alpha^{\Xi}\seq g(\Theta,\alpha^{\Xi})$. We can then construct the following $\mathsf{G3cp}$-derivation:
    \[ 
 \AxiomC{$\mathcal{D}^{\Xi}$}
 \noLine
 \UnaryInfC{$\Xi\seq \Lambda^{\Xi},\alpha^{\Xi}$}
 \AxiomC{$\mathcal{D}^{\Xi,\Theta}$}
 \noLine
 \UnaryInfC{$\Theta,\alpha^{\Xi}\seq g(\Theta,\alpha^{\Xi})$}
 \RightLabel{\footnotesize $\mathsf{Cut}$}
 \BinaryInfC{$\Xi,\Theta,\seq \Lambda^{\Xi},g(\Theta,\alpha^{\Xi})$}
 \DisplayProof
\]
By classical cut elimination (Theorem \ref{di:theorem:classical_cut_elimination}), there is an effective procedure transforming this derivation into a derivation $\mathcal{D}_{\Xi,\Theta}$ witnessing $\vdash_{\scriptsize \mathsf{G3cp}^-}\Xi,\Theta,\seq \Lambda^{\Xi},g(\Theta,\alpha^{\Xi})$. 

Combining the derivations $\mathcal{D}_{\Xi,\Theta}$ with subderivations derived from the subderivations of $\mathcal{D}'_1$ and $\mathcal{D}_2'$ which apply $\mathsf{R}\vvee$ to partial resolutions of $\De,\Sigma$ (changing the contexts as appropriate), we can construct derivations witnessing
\begin{align*}
    &\vdash_{\scriptsize \mathsf{G3cp}^-}\bigdoublewedge_{[\Xi,\Theta]\in \RRR(\Ga,\Pi)}(\Xi,\Theta,\seq \Lambda^{\Xi},g(\Theta,\alpha^{\Xi}))\vdash_{\scriptsize \mathsf{R}\vvee}\bigdoublewedge_{[\Xi,\Theta]\in \RRR(\Ga,\Pi)}(\Xi,\Theta,\seq \De,\Sigma)
\end{align*}
Finally, combining these derivations with subderivations derived from the subderivations of $\mathcal{D}'_1$ and $\mathcal{D}_2'$  which apply $\mathsf{L}\vvee$ to partial resolutions of $\Ga,\Pi$ (changing the contexts as appropriate), we can construct a derivation witnessing
\reqnomode
\begin{align*}
    &\vdash_{\scriptsize \mathsf{G3cp}^-}\bigdoublewedge_{[\Xi,\Theta]\in \RRR(\Ga,\Pi)}(\Xi,\Theta,\seq \Lambda^{\Xi},g(\Theta,\alpha^{\Xi}))\vdash_{\scriptsize \mathsf{R}\vvee}\bigdoublewedge_{[\Xi,\Theta]\in \RRR(\Ga,\Pi)}(\Xi,\Theta,\seq \De,\Sigma)\vdash_{\scriptsize \mathsf{L}\vvee}\Ga,\Pi\seq \De,\Sigma 
\end{align*}
\end{proof}

\leqnomode

\begin{example}
We wish to eliminate the following cut from a derivation $\mathcal{D}$, where $\mathcal{D}_1$ and $\mathcal{D}_2$ are cutfree:
    \[
 \AxiomC{$\mathcal{D}_1$}
 \noLine
 \UnaryInfC{$a\vee (p\vvee q)\seq p\vvee q,a$}
 \AxiomC{$\mathcal{D}_2$}
 \noLine
 \UnaryInfC{$b,p\vvee q\seq (b\land p)\vvee (b\land q)$}
 \RightLabel{\footnotesize $\mathsf{Cut}$}
 \BinaryInfC{$a\vee (p\vvee q),b\seq a,(b\land p)\vvee (b\land q)$}
 \DisplayProof
\]
The normal form theorem transforms the derivations $\mathcal{D}_1$ and $\mathcal{D}_2$ into derivations in normal form $\mathcal{D}'_1$ and $\mathcal{D}'_2$:
    \[
\AxiomC{$\mathcal{D}^{a\vee p}$}
\noLine
\UnaryInfC{$a\vee p\seq p,a$}
 \RightLabel{\footnotesize $\mathsf{R}\vvee$}
\UnaryInfC{$a\vee p\seq p\vvee q,a$}
\AxiomC{$\mathcal{D}^{a\vee q}$}
\noLine
\UnaryInfC{$a\vee q\seq q,a$}
 \RightLabel{\footnotesize $\mathsf{R}\vvee$}
\UnaryInfC{$a\vee q\seq p\vvee q,a$}
 \RightLabel{\footnotesize $\mathsf{L}\vvee$}
\BinaryInfC{$a\vee (p\vvee q)\seq p\vvee q,a$}
 \DisplayProof \hspace{0.5cm}
\AxiomC{$\mathcal{D}^{a\vee p,b}$}
\noLine
\UnaryInfC{$b, p\seq b\land p$}
 \RightLabel{\footnotesize $\mathsf{R}\vvee$}
\UnaryInfC{$b, p\seq (b\land p)\vvee (b\land q)$}
\AxiomC{$\mathcal{D}^{a\vee q,b}$}
\noLine
\UnaryInfC{$b, q\seq b\land q$}
 \RightLabel{\footnotesize $\mathsf{R}\vvee$}
\UnaryInfC{$b, q\seq (b\land p)\vvee (b\land q)$}
 \RightLabel{\footnotesize $\mathsf{L}\vvee$}
\BinaryInfC{$b,(p\vvee q)\seq (b\land p)\vvee (b\land q)$}
 \DisplayProof
\]
Combining the classical subderivations of $\mathcal{D}'_1$ and $\mathcal{D}'_2$, we have the following $\mathsf{G3cp}$-derivations:
   \[
\AxiomC{$\mathcal{D}^{a\vee p}$}
\noLine
\UnaryInfC{$a\vee p\seq p,a$}
\AxiomC{$\mathcal{D}^{a\vee p,b}$}
\noLine
\UnaryInfC{$b, p\seq b\land p$}
 \RightLabel{\footnotesize $\mathsf{Cut}$}
 \BinaryInfC{$a\vee p,b\seq a,b\land p$}
 \DisplayProof \hspace{0.5cm}
 \AxiomC{$\mathcal{D}^{a\vee q}$}
\noLine
\UnaryInfC{$a\vee q\seq q,a$}
\AxiomC{$\mathcal{D}^{a\vee q,b}$}
\noLine
\UnaryInfC{$b, q\seq b\land q$}
 \RightLabel{\footnotesize $\mathsf{Cut}$}
 \BinaryInfC{$a\vee q,b\seq a,b\land q$}
 \DisplayProof
\]
By classical cut elimination, we may transform the first of these into a classical cutfree derivation $\mathcal{D}_{a\vee p, b}$ of $a\vee p,b\seq a,b\land p$, and similarly for the second. Combining these derivations with subderivations derived as appropriate from $\mathcal{D}'_1$ and $\mathcal{D}'_2$, we have the following cutfree derivation of $a\vee (p\vvee q),b\seq (b\land p)\vvee (b\land q)$: 
    \[
\AxiomC{$\mathcal{D}_{a\vee p,b}$}
\noLine
\UnaryInfC{$a\vee p,b\seq a,b\land p$}
 \RightLabel{\footnotesize $\mathsf{R}\vvee$}
\UnaryInfC{$a\vee p,b\seq a,(b\land p)\vvee (b\land q)$}
\AxiomC{$\mathcal{D}_{a\vee q,b}$}
\noLine
\UnaryInfC{$a\vee q,b\seq a,b\land q$}
 \RightLabel{\footnotesize $\mathsf{R}\vvee$}
\UnaryInfC{$a\vee q,b\seq a,(b\land p)\vvee (b\land q)$}
 \RightLabel{\footnotesize $\mathsf{L}\vvee$}
\BinaryInfC{$a\vee (p\vvee q),b\seq a, (b\land p)\vvee (b\land q)$}
 \DisplayProof
\]
\end{example}

Note that we have now the following counterpart to the resolution theorem (Theorem \ref{di:theorem:resolution}) in our system (cf. \cite[Theorem 1]{ciardelli2018}):

\begin{corollary}[Derivability resolution theorem] \label{coro:derivability_resolution}
There is an effective procedure transforming any derivation $\mathcal{D}$ witnessing $\vdash_{\scriptsize \mathsf{GT}}\Ga\seq \De$ into derivations $\mathcal{D}_\Xi$ ($\Xi\in \RRR(\Gamma)$) witnessing $\vdash_{\scriptsize \mathsf{GT}}\bigdoublewedge_{\Xi\in \RRR(\Gamma)}(\Xi\seq f[\Xi])$ where $f:\RRR(\Ga)\to \RRR(\De)$, and vice versa.
\end{corollary}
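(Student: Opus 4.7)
The plan is to obtain both directions essentially for free from results already established: the forward direction from cut elimination plus the normal form theorem, and the backward direction from repeated applications of the deep-inference rules as in the proof of Theorem \ref{di:theorem:cutfree_completeness}.

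For the forward direction, given $\mathcal{D}$ witnessing $\vdash_{\scriptsize \mathsf{GT}}\Ga\seq \De$, first apply cut elimination (Theorem \ref{di:theorem:cut_elimination}) to obtain a cutfree derivation $\mathcal{D}^-$ of $\Ga\seq \De$. Then apply the normal form procedure (Theorem \ref{di:theorem:normal_form}) to transform $\mathcal{D}^-$ into a derivation $\mathcal{D}'$ in normal form, which by definition witnesses
\[
\vdash_{\scriptsize {\mathsf{G3cp}}^-}\bigdoublewedge_{\Xi\in \RRR(\Gamma)}(\Xi\seq f[\Xi])\vdash_{\scriptsize \mathsf{R}\vvee}\bigdoublewedge_{\Xi\in \RRR(\Gamma)}(\Xi\seq \De)\vdash_{\scriptsize \mathsf{L}\vvee}\Ga\seq \De
\]
for some $f:\RRR(\Ga)\to \RRR(\De)$. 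The desired derivations $\mathcal{D}_\Xi$ are then simply the subderivations of $\mathcal{D}'$ ending in the classical sequents $\Xi\seq f[\Xi]$.

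For the backward direction, given derivations $\mathcal{D}_\Xi$ witnessing $\vdash_{\scriptsize \mathsf{GT}}(\Xi\seq f[\Xi])$ for each $\Xi\in \RRR(\Ga)$, we construct a derivation of $\Ga\seq \De$ by mirroring the final two blocks of a normal form derivation. Since $f[\Xi]\in \RRR(\De)=\PPP\RRR_{|\De|_{\scriptsize\vvee}}(\De)$ by Proposition \ref{di:prop:partial_resolutions_are_resolutions}, repeated applications of Lemma \ref{di:lemma:partial_resolution_derivations}\ref{di:lemma:partial_resolution_derivations_i} (which amounts to iterated applications of $\mathsf{R}\vvee$) transform each $\mathcal{D}_\Xi$ into a derivation of $\Xi\seq\De$. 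Then, since $\Xi$ ranges over $\RRR(\Ga)=\PPP\RRR_{|\Ga|_{\scriptsize\vvee}}(\Ga)$, repeated applications of Lemma \ref{di:lemma:partial_resolution_derivations}\ref{di:lemma:partial_resolution_derivations_ii} (which amounts to iterated applications of $\mathsf{L}\vvee$) combine these derivations into a derivation of $\Ga\seq \De$.

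There is no genuine obstacle here; the only thing requiring minor care is verifying that the effective procedures promised (cut elimination, the normal form transformation, and the repeated partial-resolution $\vvee$-applications) really do compose into a single effective procedure in each direction, and noting that in the backward direction one does not even need cut elimination since the construction stays within $\mathsf{GT}^-$.
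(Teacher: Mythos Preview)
Your proposal is correct and takes essentially the same approach as the paper: cut elimination followed by the normal form theorem for the forward direction, and iterated applications of $\mathsf{R}\vvee$ and $\mathsf{L}\vvee$ for the backward direction. Your backward direction is slightly more detailed in citing Proposition~\ref{di:prop:partial_resolutions_are_resolutions} and Lemma~\ref{di:lemma:partial_resolution_derivations}, but this is just an explicit unpacking of what the paper summarizes as ``using the derivations $\mathcal{D}_\Xi$ and the rules $\mathsf{R}\vvee$ and $\mathsf{L}\vvee$.''
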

\begin{proof}
   For the first direction, Theorem \ref{di:theorem:cut_elimination} yields a derivation witnessing $\vdash_{\scriptsize \mathsf{GT}^-}\Ga\seq \De$, and the result then follows by Theorem \ref{di:theorem:normal_form}. For the second direction, we can clearly construct a derivation witnessing $\vdash_{\scriptsize \mathsf{GT}}\Ga\seq \De$ using the derivations $\mathcal{D}_\Xi$ and the rules $\mathsf{R}\vvee$ and $\mathsf{L}\vvee$. 
\end{proof}

\section{Interpolation} \label{di:section:interpolation}

The cutfree completeness of $\mathsf{GT}$ enables us to provide a proof of Craig's and Lyndon's interpolation for $\PLVVEE$ by adapting Maehara's method involving partition sequents (also known as split sequents) \cite{maehara}.

 We say that an occurrence of some propositional variable in $\phi$ occurs \emph{positively}/\emph{negatively} in $\phi$ if the occurrence is in the scope of an even/odd number of $\lnot$ in $\phi$. We write $\mathsf{P}^+(\phi)$/$\mathsf{P}^-(\phi)$ for the set of propositional variables occurring positively/negatively in $\phi$, and we let $\mathsf{P}^i(\Gamma):=\bigcup_{\phi\in \Gamma}\mathsf{P}^i(\phi)$, for $i\in \{+,-\}$.

A \emph{partition sequent} is an expression of the form $\Gamma_1;\Gamma_2\seq \De_1; \De_2$ such that $\Ga_1,\Ga_2\seq \De_1,\De_2$ is a sequent. We say that a formula $\phi$ is a \emph{sequent interpolant} of $\Gamma_1;\Gamma_2\seq \De_1; \De_2$, written $\Gamma_1;\Gamma_2\overset{\scriptsize \phi}{\seq} \De_1; \De_2$, if (i) $\vdash_{\scriptsize \mathsf{GT}^-}\Ga_1\seq \De_1,\phi$ and $\vdash_{\scriptsize \mathsf{GT}^-}\Ga_2,\phi\seq \De_2$; (ii) $\mathsf{P}(\phi)^i\subseteq (\mathsf{P}^i(\Ga_1)\cup \mathsf{P}^j(\De_1)) \cap (\mathsf{P}^j( \Ga_2)\cup \mathsf{P}^i(\De_2))$, for $i\in \{+,-\}$ and $j\in \{+,-\}\setminus \{i\}$.

Interpolation via Maehara's method for $\mathsf{G3cp}$ \cite[pp. 116--118]{troelstra} yields the result that for a sequent $\Ga \seq \De$ of $\mathsf{G3cp}$, for each partition sequent $\Ga_1;\Ga_2\seq \De_1;\De_2$ such that $\Ga_1,\Ga_2=\Ga$ and $\De_1,\De_2=\De$ (that is, for each pair of partitions of $\Ga$ into $\Ga_1$ and $\Ga_2$ and $\De$ into $\De_1$ and $\De_2$), there is a sequent interpolant. Interestingly, this result does not hold in general for $\PLVVEE$. Consider, for instance, the following valid sequent, where $?p:=p\vvee \lnot p$, etc.
$$?p, ?q\seq ?p\land ?q \land r , ?p \land ?q \land \lnot r$$
If it were the case that this form of interpolation held for $\PLVVEE$, there would be a formula $\phi$ such that $?p; ?q\overset{\scriptsize \phi}{\seq} ?p\land ?q \land r ; ?p \land ?q \land \lnot r$, and hence such that the following sequents are valid: $?p\seq ?p\land ?q \land r,\phi$ (whence $?p\models (?p\land ?q \land r)\vee \phi$) and $?q,\phi\seq ?p\land ?q \land \lnot r$ (whence $?q\land \phi \models ?p\land ?q \land \lnot r$). Now consider the team $t:=\{v_{pqr},v_{p\bar{q}r}\}$, where, e.g., $v_{p\bar{q}r}(p)=1$, $v_{p\bar{q}r}(q)=0$, $v_{p\bar{q}r}(r)=1$, etc. Since $t\models ?p$, we must have $t\models (?p\land ?q \land r)\vee \phi$, whence $t=t_1\cup t_2$, where $t_1\models ?p\land ?q \land r$ and $t_2\models \phi$. Clearly $v_{pqr}$ and $v_{p\bar{q}r}$ cannot both belong to $t_1$, whence either $v_{p\bar{q}r}\in t_2$ or $v_{pqr}\in t_2$. In the former case (the latter case is similar), by downward closure, $ \{v_{p \bar{q}r}\}\models \phi$. Clearly also $ \{v_{p\bar{q}r}\}\models ?q$, whence $ \{v_{p\bar{q}r}\}\models ?p\land ?q \land \lnot r$, but this clearly cannot be the case.

However, we can prove a version of this result restricted to partition sequents $\Ga_1;\Ga_2\seq \Lambda_1;\De_2$ where $\Lambda_1$ is classical.

\begin{theorem} \label{di:theorem:interpolation}
    If $\vdash_{\scriptsize \mathsf{GT}} \Ga\seq \De$, then for each pair of partitions $ \Ga_1 ,\Ga_2$ of $\Ga$ and $\Lambda_1 ,\De_2$ of $ \De$, there is a sequent interpolant $\phi$ of $ \Ga_1 ;\Ga_2\seq \Lambda_1 ;\De_2$, and if $\De_2$ is classical, then $\phi$ is classical.
\end{theorem}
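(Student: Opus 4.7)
The plan is to apply cut elimination (Theorem~\ref{di:theorem:cut_elimination}) to obtain a cutfree derivation, and then to construct a sequent interpolant by induction on the height of this derivation, following Maehara's rule-by-rule recipe. For the axiom $\mathsf{At}$ of the form $\Ga, p \seq p, \De$, I would select a classical interpolant from $\{\bot, p, \lnot p, \lnot\bot\}$ according to which side ($\Ga_1$ or $\Ga_2$) contains the marked $p$ on the left and which side ($\Lambda_1$ or $\De_2$) contains the marked $p$ on the right; for $\mathsf{L}\bot$, take $\bot$ or $\lnot\bot$ according to the side containing $\bot$. Each axiom case satisfies condition (i) via axioms, the $\lnot$-rules, and weakening (Lemma~\ref{di:lemma:weakening}), and satisfies condition (ii) by inspection.

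For the inductive step, I would follow the standard Maehara construction for the classical rules. For the single-premise rules $\mathsf{L}\lnot$, $\mathsf{R}\lnot$, $\mathsf{L}\land$, $\mathsf{R}\vee$, the premise interpolant (after a routine adjustment of the premise partition to track the active/principal formula) serves directly as the conclusion interpolant. For the two-premise classical rules $\mathsf{R}\land$ and $\mathsf{L}\vee$, take $\phi := \phi_1 \vee \phi_2$ when the principal formula is on side $1$ and $\phi := \phi_1 \land \phi_2$ when it is on side $2$. The key observation enabling this in our restricted setting is that the premise right contexts of $\mathsf{R}\land$ and $\mathsf{L}\vee$ are themselves required to be classical, so each portion of the premise succedent going to side $2$ is also classical; by the induction hypothesis the premise interpolants $\phi_1, \phi_2$ are then classical, and every $\mathsf{R}\land$ or $\mathsf{L}\vee$ application needed to verify (i) either fills its classical right-context slot with the classical combination $\phi_1 \vee \phi_2$ or $\phi_1 \land \phi_2$ together with a classical subset of $\Lambda_1$, or absorbs non-classical formulas into the implicit weakening slot. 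For the deep-inference rule $\mathsf{R}\vvee$, the principal formula $\chi\{\phi_L \vvee \phi_R\}$ contains $\vvee$ and so cannot lie in the classical $\Lambda_1$; it must lie in $\De_2$, and the premise interpolant serves directly, with the right condition closed off by a single application of $\mathsf{R}\vvee$. The classicality clause is vacuous for $\mathsf{R}\vvee$ since $\De_2$ is already non-classical. For $\mathsf{L}\vvee$ with principal in $\Ga_2$, take $\phi := \phi_1 \land \phi_2$ and verify the conditions using $\mathsf{R}\land$ (permissible since $\Lambda_1$ is classical) together with $\mathsf{L}\land$, $\mathsf{L}\vvee$, and left-weakening.

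The genuinely subtle case, and the key to the classicality clause, is $\mathsf{L}\vvee$ with principal in $\Ga_1$. The default choice is $\phi := \phi_1 \vvee \phi_2$, with the left condition obtained via $\mathsf{R}\vvee$ applied to each IH left condition followed by $\mathsf{L}\vvee$, and the right condition via a single $\mathsf{L}\vvee$. When $\De_2$ is classical, the induction hypothesis yields classical $\phi_1, \phi_2$, so I would instead take $\phi := \phi_1 \vee \phi_2$, derive the left condition by right-weakening, $\mathsf{R}\vee$, and $\mathsf{L}\vvee$, and derive the right condition by $\mathsf{L}\vee$ --- whose classical-right-context restriction is met precisely by the classicality of $\De_2$. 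The variable-occurrence condition (ii) propagates routinely: combining premise interpolants by $\land$, $\vee$, or $\vvee$ preserves polarities, the $\lnot$-rules flip the polarity of the active formula in step with the outer negation, and the deep-inference rules enlarge formulas without disturbing polarities at the substituted positions. The main obstacle to anticipate is precisely this interplay between the classicality restrictions on $\mathsf{R}\land$ and $\mathsf{L}\vee$ and the interpolant construction in the $\mathsf{L}\vvee$ case; once one observes that the restrictions force the relevant premise interpolants to be classical exactly when the classicality of the constructed interpolant is required, the construction closes.
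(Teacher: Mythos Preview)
Your proposal is correct and follows essentially the same approach as the paper's proof: cut elimination followed by Maehara's induction on the cutfree derivation, with the same axiom interpolants, the same $\vee$/$\land$ combinations for the two-premise rules, and the same bifurcation in the $\mathsf{L}\vvee$-principal-in-$\Ga_1$ case (taking $\phi_1\vvee\phi_2$ in general but $\phi_1\vee\phi_2$ when $\De_2$ is classical). You have also correctly identified the key technical point---that the classical premise right-context restriction on $\mathsf{R}\land$ and $\mathsf{L}\vee$ forces the premise interpolants to be classical, which is exactly what is needed both to reapply these restricted rules and to maintain the classicality invariant.
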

\begin{proof}
By cutfree completeness, there is a derivation $\mathcal{D}$ that witnesses $\vdash_{\scriptsize \mathsf{GT^-}} \Ga\seq \De$. We prove the result by induction on the depth of $\mathcal{D}$. Given $\psi\in \Ga$, we often write $\Ga=\psi,\Ga'$. Most cases are as in the corresponding proof for $\mathsf{G3cp}$; we only provide the details for some interesting inductive cases, but summarize the algorithm for constructing interpolants in Table \ref{di:table:interpolants}. We omit mention of the constraints on $\mathsf{P}_i(\phi)$ ($i\in \{+,-\}$) in the proof; that they hold is easily confirmed by inspecting Table \ref{di:table:interpolants}.

\newcommand\Tstrut{\rule{0pt}{2.5ex}}
\newcommand\Tstrutt{\rule{0pt}{4.4ex}}
\newcommand\Tstruttt{\rule{0pt}{5.5ex}}
\newcommand\Tstrutttt{\rule{0pt}{3ex}}

\newcommand\Bstrut{\rule[-1.3ex]{0pt}{0pt}} 
\newcommand\Bstrutt{\rule[-3.1ex]{0pt}{0pt}}
\newcommand\Bstruttt{\rule[-4.1ex]{0pt}{0pt}}
\newcommand\Bstrutttt{\rule[-4.5ex]{0pt}{0pt}}
\setcounter{table}{0}
\begin{table}[ht]
\centering
\scriptsize
\begin{tabular}{ C{0.5cm} | C{3.45cm}  C{3.45cm} | C{3.5cm}  C{3.5cm}  }
 \Tstrut\Bstrut Rule & \multicolumn{2}{c |}{Principal formula in $\Ga_1$/$\Lambda_1$} & \multicolumn{2}{c}{Principal formula in $\Ga_2$/$\De_2$} \\
 \hline
 \Tstrut &  $p\in \Ga_1\cap\Lambda_1$ & \cellcolor{gray!10}  $p\in \Ga_1\cap \De_2$ & $p\in \Ga_2\cap\Lambda_1$ & \cellcolor{gray!10} $p\in \Ga_2\cap\De_2$ \\ 
 \multirow{-2}{0.2cm}{\vspace{-0.2cm}$\mathsf{Ax}$  } \Bstrut&   $\Ga_1',p;\Ga_2\overset{\tiny \bot}{\seq}p,\Lambda_1';\De_2$ & \cellcolor{gray!10} $\Ga_1',p;\Ga_2\overset{\tiny p}{\seq}\Lambda_1;p,\De_2'$ & $\Ga_1;\Ga_2',p\overset{\tiny \lnot p}{\seq}p,\Lambda'_1;\De_2$ &  \cellcolor{gray!10} $\Ga_1;\Ga_2',p\overset{\tiny \lnot \bot}{\seq}\Lambda_1;p,\De_2'$\\ 
 
 \rowcolor{gray!20}\Tstrut\Bstrut$\mathsf{L}\bot$ & \multicolumn{2}{c |}{ $\Ga_1',\bot;\Ga_2\overset{\tiny \bot}{\seq}\Lambda_1;\De_2$  } & \multicolumn{2}{c}{ $\Ga_1;\Ga_2',\bot\overset{\tiny \lnot \bot}{\seq}\Lambda_1;\De_2$  }\\ 
 
  \Tstruttt\Bstruttt$\mathsf{L}\lnot$ &  \multicolumn{2}{c |}{\AxiomC{$\Ga_1';\Ga_2\overset{\tiny \phi}{\seq}\alpha,\Lambda_1;\De_2$}
 \RightLabel{\tiny$\mathsf{L}\lnot$}
 \UnaryInfC{$\Ga_1',\lnot\alpha;\Ga_2\overset{\tiny \phi}{\seq}\Lambda_1;\De_2$ }
 \DisplayProof} & \multicolumn{2}{c}{\AxiomC{$\Ga_1;\Ga_2'\overset{\tiny \phi}{\seq}\Lambda_1;\alpha,\De_2$}
 \RightLabel{\tiny$\mathsf{L}\lnot$}
 \UnaryInfC{$\Ga_1;\Ga_2',\lnot\alpha\overset{\tiny \phi}{\seq}\Lambda_1;\De_2$ }
 \DisplayProof}    \\ 
 
  \rowcolor{gray!20}\Tstruttt\Bstruttt$\mathsf{R}\lnot$ &  \multicolumn{2}{c |}{\AxiomC{$\Ga_1,\alpha;\Ga_2\overset{\tiny \phi}{\seq}\Lambda_1';\De_2$}
 \RightLabel{\tiny$\mathsf{R}\lnot$}
 \UnaryInfC{ $\Ga_1;\Ga_2\overset{\tiny \phi}{\seq}\lnot\alpha,\Lambda'_1;\De_2$ }
 \DisplayProof }& \multicolumn{2}{c}{\AxiomC{$\Ga_1;\Ga_2,\alpha\overset{\tiny \phi}{\seq}\Lambda_1;\De_2'$}
 \RightLabel{\tiny$\mathsf{R}\lnot$}
 \UnaryInfC{ $\Ga_1;\Ga_2,\overset{\tiny \phi}{\seq}\Lambda_1;\lnot\alpha,\De'_2$ }
 \DisplayProof }   \\ 
   
  \Tstruttt\Bstruttt$\mathsf{L}\land$ &  \multicolumn{2}{c |}{ \AxiomC{$\Ga_1',\psi,\chi;\Ga_2\overset{\tiny \phi}{\seq}\Lambda_1;\De_2$}
 \RightLabel{\tiny$\mathsf{L}\land$}
 \UnaryInfC{$\Ga_1',\psi\land\chi;\Ga_2\overset{\tiny \phi}{\seq}\Lambda_1;\De_2$ }
 \DisplayProof} & \multicolumn{2}{c}{ \AxiomC{$\Ga_1;\Ga_2',\psi,\chi\overset{\tiny \phi}{\seq}\Lambda_1;\De_2$}
 \RightLabel{\tiny$\mathsf{L}\land$}
 \UnaryInfC{$\Ga_1;\Ga_2',\psi\land\chi\overset{\tiny \phi}{\seq}\Lambda_1;\De_2$ }
 \DisplayProof}   \\ 

\rowcolor{gray!20}\Tstruttt\Bstruttt$\mathsf{R}\land$ &  \multicolumn{2}{c |}{  \AxiomC{$\Ga_1;\Ga_2\overset{\tiny \alpha_1}{\seq} \psi,\Lambda_1';\Lambda_2$}
 \AxiomC{$\Ga_1;\Ga_2\overset{\tiny \alpha_2}{\seq} \chi,\Lambda_1';\Lambda_2$}
 \RightLabel{{\tiny$\mathsf{R}\land$}}
 \BinaryInfC{$\Ga_1;\Ga_2\overset{\tiny \alpha_1 \vee \alpha_2}{\seq} \psi\land \chi,\Lambda_1';\Lambda_2,\De_2'$}
 \DisplayProof} & \multicolumn{2}{c}{  \AxiomC{$\Ga_1;\Ga_2\overset{\tiny \phi_1}{\seq}\Lambda_1;\psi,\Lambda_2$}
 \AxiomC{$\Ga_1;\Ga_2\overset{\tiny \phi_2}{\seq}\Lambda_1;\chi,\Lambda_2$}
 \RightLabel{{\tiny$\mathsf{R}\land$}}
 \BinaryInfC{$\Ga_1;\Ga_2\overset{\tiny \phi_1\land\phi_2}{\seq}\Lambda_1;\psi\land\chi,\Lambda_2,\De'_2$}
 \DisplayProof}    \\ 

 \Tstruttt\Bstruttt$\mathsf{L}\vee$ & \multicolumn{2}{c |}{ \AxiomC{$\Ga_1;\Ga_2\overset{\tiny \alpha_1}{\seq} \psi,\Lambda_1';\Lambda_2$}
 \AxiomC{$\Ga_1',\chi;\Ga_2\overset{\tiny \alpha_2}{\seq}\Lambda_1;\Lambda_2$}
 \RightLabel{{ \tiny$\mathsf{L}\vee$}}
 \BinaryInfC{$\Ga_1',\psi\vee\chi;\Ga_2\overset{\tiny \alpha_1\vee\alpha_2}{\seq}\Lambda_1;\Lambda_2,\De_2'$}
 \DisplayProof  } & \multicolumn{2}{c}{ \AxiomC{$\Ga_1;\Ga_2',\psi\overset{\tiny \alpha_1}{\seq}\Lambda_1;\Lambda_2$}
 \AxiomC{$\Ga_1;\Ga_2',\chi\overset{\tiny \alpha_2}{\seq}\Lambda_1;\Lambda_2$}
 \RightLabel{{ \tiny$\mathsf{L}\vee$}}
 \BinaryInfC{$\Ga_1;\Ga_2',\psi\vee\chi\overset{\tiny \alpha_1\land\alpha_2}{\seq}\Lambda_1;\Lambda_2,\De'_2$ }
 \DisplayProof }  \\ 

\rowcolor{gray!20}\Tstruttt\Bstruttt$\mathsf{R}\vee$ &  \multicolumn{2}{c |}{ \AxiomC{$\Ga_1;\Ga_2\overset{\tiny \phi}{\seq}\psi,\chi,\Lambda_1';\De_2$}
 \RightLabel{\tiny$\mathsf{R}\vee$}
 \UnaryInfC{$\Ga_1;\Ga_2\overset{\tiny \phi}{\seq}\psi\vee\chi,\Lambda_1';\De_2$}
 \DisplayProof  } & \multicolumn{2}{c}{ \AxiomC{$\Ga_1;\Ga_2\overset{\tiny \phi}{\seq}\Lambda_1;\psi,\chi,\De_2'$}
 \RightLabel{\tiny$\mathsf{R}\vee$}
 \UnaryInfC{$\Ga_1;\Ga_2\overset{\tiny \phi}{\seq}\Lambda_1;\psi\vee\chi,\De_2'$}
 \DisplayProof  }    \\ 
  
 \Tstrutttt & \multicolumn{2}{c|}{$\De_2$ classical} & &   \\
\Bstrutttt& \multicolumn{2}{c|}{\hspace{-0.2cm}\AxiomC{$\Ga_1',\chi\{\phi_L\};\Ga_2\overset{\tiny \alpha_1}{\seq}\Lambda_1;\De_2$}
 \AxiomC{$\Ga_1',\chi\{\phi_R\};\Ga_2\overset{\tiny \alpha_2}{\seq}\Lambda_1;\De_2$}
 \RightLabel{{\tiny $\mathsf{L}\vvee$}}
 \BinaryInfC{$\Ga_1',\chi\{\phi_L\vvee\phi_R\};\Ga_2\overset{\tiny \alpha_1\vee\alpha_2}{\seq}\Lambda_1;\De_2$}
 \DisplayProof} & & \\
\Tstrut & \multicolumn{2}{c|}{\cellcolor{gray!10}$\De_2$ not classical} & \multicolumn{2}{c}{\multirow{-3}{*}{\vspace{-0.2cm}\hspace{-0.2cm}\AxiomC{$\Ga_1;\Ga_2',\chi\{\phi_L\}\overset{\tiny \phi_1}{\seq}\Lambda_1;\De_2$}
 \AxiomC{$\Ga_1;\Ga_2',\chi\{\phi_R\}\overset{\tiny \phi_2}{\seq}\Lambda_1;\De_2$}
 \RightLabel{{ \tiny$\mathsf{L}\vvee$}}
 \BinaryInfC{ $\Ga_1;\Ga_2',\chi\{\phi_L\vvee\phi_R\}\overset{\tiny \phi_1\land\phi_2}{\seq}\Lambda_1;\De_2$}
 \DisplayProof}} \\
 \Bstrutttt\multirow{-4}{*}{\vspace{0.85cm}$\mathsf{L}\vvee$}&\multicolumn{2}{c|}{\cellcolor{gray!10}\hspace{-0.2cm}\AxiomC{$\Ga_1',\chi\{\phi_L\};\Ga_2\overset{\tiny \phi_1}{\seq}\Lambda_1;\De_2$}
 \AxiomC{$\Ga_1',\chi\{\phi_R\};\Ga_2\overset{\tiny \phi_2}{\seq}\Lambda_1;\De_2$}
 \RightLabel{{ \tiny$\mathsf{L}\vvee$}}
 \BinaryInfC{$\Ga_1',\chi\{\phi_L\vvee\phi_R\};\Ga_2\overset{\tiny \phi_1\vvee\phi_2}{\seq}\Lambda_1;\De_2$}
 \DisplayProof}  & &     \\ 

 \rowcolor{gray!20}\Tstruttt\Bstruttt$\mathsf{R}\vvee$ &  \multicolumn{2}{c|}{ } & \multicolumn{2}{c}{ \AxiomC{$\Ga_1;\Ga_2\overset{\tiny \phi}{\seq}\Lambda_1;\chi\{\phi_i\},\De_2'$}
 \RightLabel{\tiny$\mathsf{R}\vee$}
 \UnaryInfC{$\Ga_1;\Ga_2\overset{\tiny \phi}{\seq}\Lambda_1;\chi\{\phi_L\vvee\phi_R\},\De_2'$}
 \DisplayProof  }    \\ 
\end{tabular}
\caption{Sequent interpolant algorithm. The conclusion of each rule application displays the relevant interpolation result, and the premise(s) display how the subformulas of the interpolant are obtained by applying the algorithm to partition sequent(s) corresponding to the premise(s) of the rule.}
\label{di:table:interpolants}
\end{table}

Suppose we have shown the result for all derivations of depth less than the depth of $\mathcal{D}$, and let $R$ denote the final rule applied in $\mathcal{D}$. Fix a pair of partitions $ \Ga_1 ,\Ga_2=\Gamma$ and $\Lambda_1 ,\De_2=\De$.

Suppose $R$ is $\mathsf{R}\land$.

 Case 1: The principal formula $\psi\land \chi$ of $R$ is in $\Lambda_1$ so that $R$ is of the form:
    \[
 \AxiomC{$\Ga_1,\Ga_2\seq \psi,\Lambda_1',\Lambda_2$}
 \AxiomC{$\Ga_1,\Ga_2\seq \chi,\Lambda_1',\Lambda_2$}
 \RightLabel{{\footnotesize $\mathsf{R}\land$}}
 \BinaryInfC{$\Ga_1,\Ga_2\seq \psi\land \chi,\Lambda_1',\Lambda_2,\De_2'$}
 \DisplayProof
\]
We apply the induction hypothesis to obtain $\alpha_1$ and $\alpha_2$ with $\Ga_1;\Ga_2\overset{\scriptsize \alpha_1}{\seq}\psi,\Lambda_1';\Lambda_2$ and $\Ga_1;\Ga_2\overset{\scriptsize \alpha_2}{\seq}\chi,\Lambda_1';\Lambda_2$, whence $\vdash\Ga_1\seq \psi,\Lambda_1',\alpha_1$, $\vdash\Ga_2,\alpha_1\seq \Lambda_2$, $\vdash\Ga_1\seq \chi,\Lambda_1',\alpha_2$, and $\vdash\Ga_2,\alpha_2\seq \Lambda_2$ (where, since $\Lambda_2$ is classical, $\alpha_1$ and $\alpha_2$ are classical). We show $\Ga_1;\Ga_2\overset{\scriptsize \alpha_1\vee\alpha_2}{\seq}\Lambda_1;\De_2$. By right weakening (Lemma \ref{di:lemma:weakening}), we have $\vdash\Ga_1\seq \psi,\Lambda_1',\alpha_1,\alpha_2$ and $\vdash\Ga_1\seq \chi,\Lambda_1',\alpha_1,\alpha_2$. Then by $\mathsf{R}\land $, given that $\Lambda_1',\alpha_1,\alpha_2$ is classical, $\vdash\Ga_1\seq \psi\land\chi,\Lambda_1',\alpha_1,\alpha_2$, whence by $\mathsf{R}\vee $, $\vdash\Ga_1\seq \psi\land\chi,\Lambda_1',\alpha_1\vee\alpha_2$. On the other hand, we have by $\mathsf{L}\vee$ that $\vdash\Ga_2,\alpha_1\vee \alpha_2\seq \Lambda_2,\De_2'$.

Case 2: The principal formula $\psi\land \chi$ of $R$ is in $\De_2=\psi\land \chi,\Lambda_2,\De'_2$ so that $R$ is of the form:
    \[
 \AxiomC{$\Ga_1,\Ga_2\seq \Lambda_1,\psi,\Lambda_2$}
 \AxiomC{$\Ga_1,\Ga_2\seq \Lambda_1,\chi,\Lambda_2$}
 \RightLabel{{\footnotesize $\mathsf{R}\land$}}
 \BinaryInfC{$\Ga_1,\Ga_2\seq \Lambda_1,\psi\land \chi,\Lambda_2, \De'_2$}
 \DisplayProof
\]
 We apply the induction hypothesis to obtain $\phi_1$ and $\phi_2$ with $\Ga_1;\Ga_2\overset{\tiny \phi_1}{\seq}\Lambda_1;\psi,\Lambda_2$ and $\Ga_1;\Ga_2\overset{\tiny \phi_2}{\seq}\Lambda_1;\chi,\Lambda_2$, whence $\vdash\Ga_1\seq \Lambda_1,\phi_1$, $\vdash\Ga_2,\phi_1\seq \psi,\Lambda_2$, $\vdash\Ga_1\seq \Lambda_1,\phi_2$, and $\vdash\Ga_2,\phi_2\seq \chi,  \Lambda_2$. We show $\Ga_1;\Ga_2\overset{\tiny \phi_1\land\phi_2}{\seq}\Lambda_1;\De_2$. By $\mathsf{R}\land$, we have that $\vdash\Ga_1\seq\Lambda_1, \phi_1\land \phi_2$. On the other hand, by left weakening (Lemma \ref{di:lemma:weakening}), we have $\vdash\Ga_2,\phi_1,\phi_2\seq \psi,  \Lambda_2$ and $\vdash\Ga_2,\phi_1,\phi_2\seq \chi,  \Lambda_2$. Then by $\mathsf{R}\land$, $\vdash\Ga_2,\phi_1,\phi_2\seq \psi\land \chi,  \Lambda_2,\De'_2$, whence by $\mathsf{L}\land $, $\vdash\Ga_2,\phi_1 \land \phi_2\seq \psi\land \chi,  \Lambda_2,\De'_2$. Moreover, if $\Delta_2=\psi\land \chi,\Lambda_2,\De'_2$ is classical, then so are $\psi,\Lambda_2$ and $\chi,\Lambda_2$, whence, by the induction hypothesis, so are the interpolants $\phi_1$ and $\phi_2$, so that, finally, $\phi_1 \land \phi_2$ is also classical.

 Suppose $R$ is $\mathsf{L}\vee$.

 Case 1: The principal formula $\psi\vee\chi$ of $R$ is in $\Ga_1$ so that $R$ is of the form:
   \[
 \AxiomC{$\Ga'_1,\psi,\Ga_2\seq \Lambda_1,\Lambda_2$}
 \AxiomC{$\Ga_1',\chi,\Ga_2\seq \Lambda_1,\Lambda_2$}
 \RightLabel{{\footnotesize $\mathsf{L}\vee$}}
 \BinaryInfC{$\Ga_1',\psi\vee\chi,\Ga_2\seq \Lambda_1,\Lambda_2,\De_2'$}
 \DisplayProof
\]
We apply the induction hypothesis to obtain $\alpha_1$ and $\alpha_2$ with $\Ga'_1,\psi;\Ga_2\overset{\tiny \alpha_1}{\seq}\Lambda_1;\Lambda_2$ and $\Ga_1',\chi;\Ga_2\overset{\tiny \alpha_2}{\seq}\Lambda_1;\Lambda_2$, whence $\vdash\Ga_1',\psi\seq \Lambda_1,\alpha_1$, $\vdash\Ga_2,\alpha_1\seq \Lambda_2$, $\vdash\Ga_1',\chi\seq \Lambda_1,\alpha_2$, and $\vdash\Ga_2,\alpha_2\seq \Lambda_2$ (where, since $\Lambda_2$ is classical, $\alpha_1$ and $\alpha_2$ are classical). We show $\Ga_1;\Ga_2\overset{\tiny \alpha_1\vee\alpha_2}{\seq}\Lambda_1;\De_2$. By right weakening (Lemma \ref{di:lemma:weakening}), we have $\vdash\Ga_1',\psi\seq \Lambda_1,\alpha_1,\alpha_2$ and $\vdash\Ga_1',\chi\seq \Lambda_1,\alpha_1,\alpha_2$. Then by $\mathsf{L}\vee $, given that $\Lambda_1,\alpha_1,\alpha_2$ is classical, $\vdash\Ga_1',\psi\vee\chi\seq \Lambda_1,\alpha_1,\alpha_2$, whence by $\mathsf{R}\vee $, $\vdash\Ga_1',\psi\vee\chi\seq \Lambda_1,\alpha_1\vee \alpha_2$. On the other hand, we have by $\mathsf{L}\vee$ that  $\vdash\Ga_2,\alpha_1\vee \alpha_2\seq \Lambda_2,\De_2'$.

Case 2: The principal formula $\psi\vee\chi$ of $R$ is in $\Ga_2$ so that $R$ is of the form:
    \[
 \AxiomC{$\Ga_1,\Ga'_2,\psi\seq \Lambda_1,\Lambda_2$}
 \AxiomC{$\Ga_1,\Ga'_2,\chi,\seq \Lambda_1,\Lambda_2$}
 \RightLabel{{\footnotesize $\mathsf{L}\vee$}}
 \BinaryInfC{$\Ga_1,\Ga_2',\psi\vee\chi,\seq \Lambda_1,\Lambda_2,\De_2'$}
 \DisplayProof
\]
We apply the induction hypothesis to obtain $\alpha_1$ and $\alpha_2$ with $\Ga_1;\Ga_2',\psi\overset{\tiny \alpha_1}{\seq}\Lambda_1;\Lambda_2$ and $\Ga_1;\Ga_2',\chi\overset{\tiny \alpha_2}{\seq}\Lambda_1;\Lambda_2$, whence $\vdash\Ga_1\seq \Lambda_1,\alpha_1$, $\vdash\Ga_2',\psi,\alpha_1\seq \Lambda_2$, $\vdash\Ga_1\seq \Lambda_1,\alpha_2$, and $\vdash\Ga_2',\chi,\alpha_2\seq \Lambda_2$ (where, since $\Lambda_2$ is classical, $\alpha_1$ and $\alpha_2$ are classical, though we do not make use of classicality in this case). We show $\Ga_1;\Ga_2\overset{\tiny \alpha_1\land\alpha_2}{\seq}\Lambda_1;\De_2$. By $\mathsf{R}\land$, we have $\Ga_1\seq \Lambda_1,\alpha_1\land \alpha_2$. On the other hand, by left weakening (Lemma \ref{di:lemma:weakening}), we have $\vdash\Ga_2',\psi,\alpha_1,\alpha_2\seq \Lambda_2$ and $\vdash\Ga_2',\chi,\alpha_1,\alpha_2\seq \Lambda_2$. Then by $\mathsf{L}\land$, $\vdash\Ga_2',\psi,\alpha_1\land \alpha_2\seq \Lambda_2$ and $\vdash\Ga_2',\chi,\alpha_1\land \alpha_2\seq \Lambda_2$, whence by $\mathsf{L}\vee$, $\vdash\Ga_2',\psi\vee \chi,\alpha_1\land \alpha_2\seq \Lambda_2,\De_2'$.

 Suppose $R$ is $\mathsf{L}\vvee$.

 Case 1: The principal formula $\chi\{\phi_L\vvee\phi_R\}$ of $R$ is in $\Ga_1$ so that $R$ is of the form:
  \[
 \AxiomC{$\Ga'_1,\chi\{\phi_L\},\Ga_2\seq \Lambda_1,\De_2$}
 \AxiomC{$\Ga'_1,\chi\{\phi_R\},\Ga_2\seq \Lambda_1,\De_2$}
 \RightLabel{{\footnotesize $\mathsf{L}\vvee$}}
 \BinaryInfC{$\Ga'_1,\chi\{\phi_L\vvee\phi_R\},\Ga_2\seq \Lambda_1,\De_2$}
 \DisplayProof
\]
We apply the induction hypothesis to obtain $\phi_1$ and $\phi_2$ with $\Ga'_1,\chi\{\phi_L\};\Ga_2\overset{\tiny \phi_1}{\seq}\Lambda_1;\De_2$ and $\Ga'_1,\chi\{\phi_R\};\Ga_2\overset{\tiny \phi_2}{\seq}\Lambda_1;\De_2$, whence $\vdash\Ga_1',\chi\{\phi_L\}\seq \Lambda_1,\phi_1$, $\vdash\Ga_2,\phi_1\seq \De_2$, $\vdash\Ga_1',\chi\{\phi_R\}\seq \Lambda_1,\phi_2$, and $\vdash\Ga_2,\phi_2\seq \De_2$, where if $\De_2$ is classical, $\phi_1$ and $\phi_2$ are classical. 

Case 1.1: $\De_2$ is classical. We show $\Ga_1;\Ga_2\overset{\tiny \phi_1\vee\phi_2}{\seq}\Lambda_1;\De_2$. Note that since $\phi_1$ and $\phi_2$ are classical, our new interpolant $\phi_1\vee \phi_2$ will also be classical. By right weakening (Lemma \ref{di:lemma:weakening}), we have $\vdash\Ga_1',\chi\{\phi_L\}\seq \Lambda_1,\phi_1,\phi_2$ and $\vdash\Ga_1',\chi\{\phi_R\}\seq \Lambda_1,\phi_1,\phi_2$ so by $\mathsf{L}\vvee$, $\vdash\Ga_1',\chi\{\phi_L\vvee\phi_R\}\seq \Lambda_1,\phi_1,\phi_2$ and then by $\mathsf{R}\vee$, $\vdash\Ga_1',\chi\{\phi_L\vvee\phi_R\}\seq \Lambda_1,\phi_1\vee \phi_2$. On the other hand, since $\Delta_2$ is classical, by $\mathsf{L}\vee$ we have $\vdash\Ga_2,\phi_1\vee \phi_2\seq \De_2$.

Case 1.2:  $\Delta_2$ is not classical. We show $\Ga_1;\Ga_2\overset{\tiny \phi_1\vvee\phi_2}{\seq}\Lambda_1;\De_2$. By $\mathsf{R}\vvee$, we have $\vdash\Ga_1',\chi\{\phi_L\}\seq \Lambda_1,\phi_1\vvee \phi_2$ and $\vdash\Ga_1',\chi\{\phi_R\}\seq \Lambda_1,\phi_1\vvee \phi_2$ so by $L\vvee$, $\vdash\Ga_1',\chi\{\phi_L\vvee\phi_R\}\seq \Lambda_1,\phi_1 \vvee \phi_2$. On the other hand, by $\mathsf{L}\vvee$, $\vdash\Ga_2,\phi_1\vvee \phi_2\seq \De_2$.

Case 2: The principal formula $\chi\{\phi_L\vvee\phi_R\}$ of $R$ is in $\Ga_2$ so that $R$ is of the form:
 \[
 \AxiomC{$\Ga_1,\Ga_2',\chi\{\phi_L\},\seq \Lambda_1,\De_2$}
 \AxiomC{$\Ga_1,\Ga_2',\chi\{\phi_R\},\seq \Lambda_1,\De_2$}
 \RightLabel{{\footnotesize $\mathsf{L}\vvee$}}
 \BinaryInfC{$\Ga_1,\Ga_2', \chi\{\phi_L\vvee\phi_R\},\seq \Lambda_1,\De_2$}
 \DisplayProof
\]
We apply the induction hypothesis to obtain $\phi_1$ and $\phi_2$ with $\Ga_1;\Ga_2',\chi\{\phi_L\}\overset{\tiny \phi_1}{\seq}\Lambda_1;\De_2$ and $\Ga_1;\Ga_2',\chi\{\phi_R\}\overset{\tiny \phi_2}{\seq}\Lambda_1;\Lambda_2$, whence $\vdash \Ga_1,\seq \Lambda_1,\phi_1$, $\vdash \Ga_2',\chi\{\phi_L\},\phi_1\seq \De_2$, $\vdash \Ga_1,\seq \Lambda_1,\phi_2$, and $\vdash \Ga_2',\chi\{\phi_R\},\phi_2\seq \De_2$. We show $\Ga_1;\Ga_2\overset{\tiny \phi_1\land\phi_2}{\seq}\Lambda_1;\De_2$. By $\mathsf{R}\land $, since $\Lambda_1$ is classical, $\vdash \Ga_1,\seq \Lambda_1,\phi_1\land \phi_2$. On the other hand, by left weakening (Lemma \ref{di:lemma:weakening}), $\vdash \Ga_2',\chi\{\phi_L\},\phi_1,\phi_2\seq \De_2$, and $\vdash \Ga_2',\chi\{\phi_R\},\phi_1,\phi_2\seq \De_2$ so by $\mathsf{L}\land $, $\vdash \Ga_2',\chi\{\phi_L\},\phi_1\land \phi_2\seq \De_2$ and $\vdash \Ga_2',\chi\{\phi_R\},\phi_1 \land \phi_2\seq \De_2$ whence by $\mathsf{L}\vvee$, $\vdash \Ga_2',\chi\{\phi_L\vvee\phi_R\},\phi_1 \land \phi_2\seq \De_2$. Moreover, if $\Delta_2$ is classical, then so are $\phi_1$ and $\phi_2$, and hence also $\phi_1 \land \phi_2$.

  Suppose $R$ is $\mathsf{R}\vvee$.

 The principal formula $\chi\{\phi_L\vvee\phi_R\}$ of $R$ must be in $\De_2$ so that $R$ is of the form:
    \[
 \AxiomC{$\Ga_1,\Ga_2\seq \Lambda_1,\chi\{\psi_i\},\De_2'$}
 \RightLabel{{\footnotesize $R\vvee$}}
 \UnaryInfC{$\Ga_1,\Ga_2\seq  \Lambda_1,\chi\{\phi_L\vvee\phi_R\},\De_2'$}
 \DisplayProof
\]
We apply the induction hypothesis to obtain a $\phi$ with $\Ga_1;\Ga_2\overset{\tiny \phi}{\seq}\Lambda_1;\chi\{\psi_i\},\De_2'$, whence $\Ga_1\seq \Lambda_1,\phi$ and $\Ga_2,\phi\seq \chi\{\psi_i\},\De_2'$. We show $\Ga_1;\Ga_2\overset{\tiny \phi}{\seq}\Lambda_1;\De_2$. We already have $\Ga_1\seq \Lambda_1,\phi$, and by an application of $\mathsf{R}\vvee$, $\Ga_2,\phi\seq \chi\{\psi_L\vvee \phi_R\},\De_2'$.
\end{proof}

\begin{corollary}[Craig's and Lyndon's interpolation] \label{di:coro:interpolation} If $\phi\models \psi$, then there is a $\theta$ such that $\phi\models \theta\models \psi$, and $\mathsf{P}^i(\theta)\subseteq \mathsf{P}^i(\phi)\cap \mathsf{P}^i(\psi)$, for any $i\in \{+,-\}$.
\end{corollary}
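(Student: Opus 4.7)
The plan is to derive this as a direct application of the sequent interpolation theorem (Theorem \ref{di:theorem:interpolation}) to the ``trivial'' partition sequent. Given $\phi\models \psi$, the sequent $\phi\seq \psi$ is valid in the sense of Section \ref{di:section:double_disjunction_system} (since $\phi\models \bigvee\{\psi\}$ is just $\phi\models \psi$), so cutfree completeness (Theorem \ref{di:theorem:countermodels}, or equivalently Theorem \ref{di:theorem:cutfree_completeness}) gives $\vdash_{\scriptsize \mathsf{GT}^-}\phi\seq \psi$, and in particular $\vdash_{\scriptsize \mathsf{GT}}\phi\seq \psi$.

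Next I would consider the partition sequent $\phi;\emptyset\seq \emptyset;\psi$, i.e., taking $\Ga_1=\phi$, $\Ga_2=\emptyset$, $\Lambda_1=\emptyset$, $\De_2=\psi$. The ``classical multiset'' side $\Lambda_1=\emptyset$ is vacuously classical (being the empty multiset), so Theorem \ref{di:theorem:interpolation} applies and yields a sequent interpolant $\theta$ with $\phi;\emptyset\overset{\theta}{\seq}\emptyset;\psi$. Unpacking the definition of sequent interpolant, this gives $\vdash_{\scriptsize \mathsf{GT}^-}\phi\seq \theta$ and $\vdash_{\scriptsize \mathsf{GT}^-}\theta\seq \psi$, whence by soundness (Theorem \ref{di:theorem:GT_soundness}) we obtain $\phi\models \theta\models \psi$.

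For the variable condition, specializing the general inclusion
\[\mathsf{P}^i(\theta)\subseteq (\mathsf{P}^i(\Ga_1)\cup \mathsf{P}^j(\De_1)) \cap (\mathsf{P}^j(\Ga_2)\cup \mathsf{P}^i(\De_2))\]
to $\De_1=\Lambda_1=\emptyset$ and $\Ga_2=\emptyset$ collapses it to $\mathsf{P}^i(\theta)\subseteq \mathsf{P}^i(\phi)\cap \mathsf{P}^i(\psi)$ for each $i\in\{+,-\}$, which is precisely Lyndon's condition; Craig's interpolation then follows by taking the union over $i\in\{+,-\}$. Since all the real work is done in Theorem \ref{di:theorem:interpolation}, there is no genuine obstacle here — the only thing to check carefully is that the empty-multiset side of the partition satisfies the classicality hypothesis of that theorem, which it trivially does.
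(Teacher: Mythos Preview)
Your proposal is correct and matches the paper's own proof essentially line for line: apply completeness to get $\vdash_{\mathsf{GT}}\phi\seq\psi$, then Theorem~\ref{di:theorem:interpolation} with the partition $\phi;\emptyset\seq\emptyset;\psi$ (where $\Lambda_1=\emptyset$ is trivially classical), and read off the semantic and variable conditions. The only difference is that you spell out the appeal to soundness and the collapse of the polarity condition a bit more explicitly than the paper does.
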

\begin{proof}
    By completeness, $\vdash_{\scriptsize \mathsf{GT}}\phi\seq \psi$, whence by Theorem \ref{di:theorem:interpolation}, there is a $\theta$ such that $\phi;\overset{\scriptsize \theta}{\seq};\psi$, meaning that $\phi\models \theta$, $\theta\models \psi$, and $\mathsf{P}^i(\theta)\subseteq \mathsf{P}^i(\phi)\cap \mathsf{P}^i(\psi)$.
\end{proof}

\begin{example} The following is a deduction annotated with partitions as well as the corresponding sequent interpolants produced by our procedure for the partition sequent $(p\vvee q)\vee r ; \lnot p\seq  r\vee s;q\vvee x$:
\[
\AxiomC{$p ; \overset{\scriptsize p}{\seq}   r, s;q ,p$}
\AxiomC{$ r ; \overset{\scriptsize \bot}{\seq}   r, s;q,p$}
\RightLabel{\footnotesize $\mathsf{L}\vee$}
\BinaryInfC{$p\vee r ; \overset{\scriptsize p\vee\bot}{\seq}   r, s;q,p$}
\RightLabel{\footnotesize $\mathsf{L}\lnot$}
\UnaryInfC{$p\vee r ; \lnot p\overset{\scriptsize p\vee \bot}{\seq}   r, s;q$}
 \RightLabel{\footnotesize $\mathsf{R}\vee $}
\UnaryInfC{$p\vee r ; \lnot p\overset{\scriptsize p \vee\bot}{\seq}   r\vee s;q$}
 \RightLabel{\footnotesize $\mathsf{R}\vvee$}
\UnaryInfC{$p\vee r ;\lnot p\overset{\scriptsize p\vee\bot}{\seq} r\vee s;q\vvee x$ }
\AxiomC{$q ; \overset{\scriptsize q}{\seq}   r, s;q,p $}
\AxiomC{$ r ; \overset{\scriptsize \bot}{\seq}   r, s;q,p$}
\RightLabel{\footnotesize $\mathsf{L}\vee$}
\BinaryInfC{$q\vee r ; \overset{\scriptsize q\vee\bot}{\seq}   r, s;q,p$}
\RightLabel{\footnotesize $\mathsf{L}\lnot$}
\UnaryInfC{$q\vee r ;\lnot p\overset{\scriptsize q\vee \bot}{\seq}   r, s;q$}
 \RightLabel{\footnotesize $\mathsf{R}\vee $}
\UnaryInfC{$q\vee r ;\lnot p\overset{\scriptsize q \vee\bot}{\seq}   r\vee s;q$}
 \RightLabel{\footnotesize $\mathsf{R}\vvee$}
\UnaryInfC{$q\vee r ;\lnot p\overset{\scriptsize q\vee\bot}{\seq} r\vee s;q\vvee x$ }
 \RightLabel{\footnotesize $\mathsf{L}\vvee$}
\BinaryInfC{$(p\vvee q)\vee r ;\lnot p \overset{\scriptsize (p\vee\bot)\vvee(q\vee \bot) }{\seq}   r\vee s;q\vvee x $}
 \DisplayProof
 \]
\end{example}

As mentioned in Section \ref{di:section:introduction}, Corollary \ref{di:coro:interpolation} is novel in providing a Lyndon's interpolation theorem for $\PLVVEE$, as well as a Craig's interpolation theorem that does not rely on the construction of uniform interpolants; note also that the modified Maehara's method we have used is constructive, and that the size of the constructed interpolant is linear in the size of the relevant cutfree derivation (cf. the uniform interpolation proof making use of expressive completeness and locality in \cite{dagostino,yang2022}). We conjecture that one can also use our system to provide a proof of sequent uniform interpolation along the lines of \cite{Pitts_1992,iemhoff2019apal,iemhoff2019aml}, but we leave this for future work.

\section{Variant System with Independent Contexts} \label{di:section:structural_rule_variant}

Our system $\mathsf{GT}$ features shared contexts for the two-premise rules, and, as in $\mathsf{G3cp}$, the structural rules of contraction are implicit, having been ``absorbed'' into these shared-context rules. In this section, we briefly present a variant of $\mathsf{GT}$ with independent contexts for $\mathsf{R}\land$ and $\mathsf{L}\vee$ as well as explicit contraction rules, and show that it is equivalent to $\mathsf{GT}$. This variant features no syntactic restrictions on the rules $\mathsf{R}\land$ and $\mathsf{L}\vee$; the rules of the variant therefore make it explicit that the structural effect of the syntactic restrictions in conjunction with the implicit weakening in $\mathsf{R}\land$ and $\mathsf{L}\vee$ is to allow for right weakening for all formulas while allowing for right contraction only for classical formulas.

\begin{definition}[The sequent calculus $\mathsf{GT}'$]\
The rules for $\mathsf{GT}'$ are as for $\mathsf{GT}$ except we remove the rules $\mathsf{R}\land$ and $\mathsf{L}\vee$, and add:
    \\
    
\noindent
\begin{tabular}{|C{0.50\textwidth} C{0.44\textwidth} |}
\hline
&\\
 \multicolumn{2}{|c|}{\it Logical rules} \\
 &\\
 \AxiomC{$\Ga_1,\phi\seq \De_1$}
 \AxiomC{$\Ga_2,\psi\seq \De_2$}
 \RightLabel{{\footnotesize $\mathsf{L}\lor'$}}
 \BinaryInfC{$\Ga_1,\Ga_2,\phi\lor \psi\seq \De_1,\De_2$}
 \DisplayProof 
 &\AxiomC{$\Ga_1\seq \phi,\De_1$}
 \AxiomC{$\Ga_2\seq \psi,\De_2$}
 \RightLabel{{\footnotesize $\mathsf{R}\land'$}}
 \BinaryInfC{$\Ga_1,\Ga_2 \seq \phi \land \psi,\De_1,\De_2$}
 \DisplayProof \\
  &\\
 &\\
 \multicolumn{2}{|c|}{\it Structural rules} \\
 &\\
 \AxiomC{$\Ga,\phi,\phi\seq \De$}
 \RightLabel{{\footnotesize $\mathsf{LC}$}}
 \UnaryInfC{$\Ga,\phi\seq \De$}
 \DisplayProof 
 &  \AxiomC{$\Ga\seq \alpha,\alpha,\De$}
 \RightLabel{{\footnotesize $\mathsf{RC}$}}
 \UnaryInfC{$\Ga\seq \alpha,\De$}
 \DisplayProof \\
 &\\
 \hline
\end{tabular}
\end{definition}

It is easy to verify that these rules are sound; we omit the proof.

Compare, e.g., with Dummett's system for intuitionistic logic \cite[p. 97]{dummett} and with multiplicative linear logic \cite{girard}. Cf. \cite{abramsky}, in which a connection is drawn between the semantics of the split disjunction $\vee$ and the multiplicative conjunction (whereas here the rules for the split disjunction coincide with those for the multiplicate disjunction).

It is easy to see that the systems $\mathsf{GT}$ and $\mathsf{GT}'$ are equivalent:

\begin{proposition}
    $\vdash_{\scriptsize \mathsf{GT}^-}\Ga\seq \De \iff \vdash_{\scriptsize \mathsf{GT}'^-}\Ga\seq \De$.
\end{proposition}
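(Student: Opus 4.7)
The statement is an equivalence, so I would prove each direction separately. Since the two systems agree on all rules except that $\mathsf{GT}$ has the shared-context rules $\mathsf{R}\land$ and $\mathsf{L}\vee$ (with syntactic restriction to classical shared right context and implicit weakening) whereas $\mathsf{GT}'$ has the independent-context rules $\mathsf{R}\land'$, $\mathsf{L}\vee'$ together with explicit contraction rules $\mathsf{LC}$ and $\mathsf{RC}$ (the latter restricted to classical formulas), in each direction the task reduces to simulating these differing rules.

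For $\vdash_{\scriptsize \mathsf{GT}^-}\;\Rightarrow\;\vdash_{\scriptsize \mathsf{GT}'^-}$, I would argue syntactically. The first step is to establish that height-preserving weakening (left and right) is admissible in $\mathsf{GT}'^-$; this is a straightforward induction along the lines of Lemma \ref{di:lemma:weakening}, since the axioms already carry arbitrary context and weakening commutes with each rule (including $\mathsf{LC}$ and $\mathsf{RC}$, noting that right weakening with a non-classical formula does not interfere with $\mathsf{RC}$ because $\mathsf{RC}$ contracts only a specific classical formula in its premise). The second step is to simulate $\mathsf{R}\land$: given cutfree derivations of $\Ga\seq \phi,\Lambda$ and $\Ga\seq \psi,\Lambda$ with $\Lambda$ classical, apply $\mathsf{R}\land'$ to obtain $\Ga,\Ga\seq \phi\land\psi,\Lambda,\Lambda$, then fold the antecedent with repeated $\mathsf{LC}$ and the succedent copies of $\Lambda$ with repeated $\mathsf{RC}$ — the $\mathsf{RC}$ steps are legitimate precisely because $\Lambda$ is classical — and finally use admissible right weakening to append $\De$. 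The simulation of $\mathsf{L}\vee$ is entirely symmetric, using $\mathsf{L}\vee'$ in place of $\mathsf{R}\land'$.

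For $\vdash_{\scriptsize \mathsf{GT}'^-}\;\Rightarrow\;\vdash_{\scriptsize \mathsf{GT}^-}$, a rule-by-rule syntactic simulation runs into a genuine obstacle: $\mathsf{R}\land'$ and $\mathsf{L}\vee'$ admit arbitrary non-classical independent contexts $\De_1,\De_2$, while $\mathsf{R}\land$ and $\mathsf{L}\vee$ require the shared context to be classical, and there is no evident way to merge two premises carrying disjoint non-classical right contexts without illegally sharing or contracting non-classical formulas. This is the main obstacle; I would sidestep it by routing through semantics. Specifically, I would first verify soundness of $\mathsf{GT}'$: the only new soundness obligations are $\mathsf{R}\land'$ (shown by the partition-refinement decomposition $t=(s_1\cap s_2)\cup((u_1\cup u_2)\setminus(s_1\cap s_2))$ together with downward closure applied to each conjunct), $\mathsf{L}\vee'$ (by splitting $t=s\cup u$ via the premise of the disjunction and applying downward closure to get $s\models \Ga_1$ and $u\models \Ga_2$), $\mathsf{LC}$ (by idempotence of $\land$), and $\mathsf{RC}$ on classical formulas (by the union closure of classical formulas, Proposition~\ref{di:prop:normal_form} notwithstanding). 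Then $\vdash_{\scriptsize \mathsf{GT}'^-}\Ga\seq\De$ yields $\Ga\models \bigvee\De$, whence $\vdash_{\scriptsize \mathsf{GT}^-}\Ga\seq\De$ by the cutfree completeness of $\mathsf{GT}^-$ (Theorem~\ref{di:theorem:cutfree_completeness}).
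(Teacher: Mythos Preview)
Your proposal is correct and follows essentially the same approach as the paper: the $\Longrightarrow$ direction is handled by syntactically simulating $\mathsf{R}\land$ and $\mathsf{L}\vee$ in $\mathsf{GT}'^-$ via the independent-context rule followed by $\mathsf{LC}$, $\mathsf{RC}$ (using classicality of $\Lambda$), and admissible weakening; the $\Longleftarrow$ direction goes through soundness of $\mathsf{GT}'$ and cutfree completeness of $\mathsf{GT}$. Your additional discussion of the obstacle to a purely syntactic converse and your explicit soundness checks are more detailed than the paper's treatment, but the strategy is the same.
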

\begin{proof}
The direction $\Longleftarrow$ follows by soundness and the cutfree completeness of $\mathsf{GT}$. The direction $\Longrightarrow$ follows by the admissibility of $\mathsf{R}\land$ and $\mathsf{L}\vee$ in $\mathsf{GT}'^-$. For instance, we have:
\[\AxiomC{$\Ga,\phi\seq \Lambda$}
 \AxiomC{$\Ga,\psi\seq \Lambda$}
 \RightLabel{{\footnotesize $\mathsf{L}\lor'$}}
 \BinaryInfC{$\Ga,\Ga,\phi\lor \psi\seq \Lambda,\Lambda$}
 \DisplayProof \]
 Clearly $\Ga,\Ga,\phi\lor \psi\seq \Lambda,\Lambda\vdash_{\scriptsize \mathsf{LC}}\Ga,\phi\lor \psi\seq \Lambda,\Lambda\vdash_{\scriptsize \mathsf{RC}}\Ga,\phi\lor \psi\seq \Lambda$. As with $\mathsf{GT}$ (Lemma \ref{di:lemma:weakening}), it is easy to show that weakening is admissible in $\mathsf{GT}'^-$, whence $\Ga,\phi\lor \psi\seq \Lambda\vdash_{\scriptsize \mathsf{GT}'^-}\Ga,\phi\lor \psi\seq \Lambda,\De$.
\end{proof}

\section{Concluding Remarks} \label{di:section:conclusion}

In this paper, we presented a sequent calculus $\mathsf{GT}$ for basic propositional team logic $\PLVVEE$ consisting of a Gentzen-style subsystem $\mathsf{G3cp}$ (without the implication rules) for classical propositional logic $\PL$ together with two deep-inference rules for the nonclassical inquisitive disjunction $\vvee$. Our results on $\mathsf{GT}$ demonstrate that the logic $\PLVVEE$ is amenable to standard proof-theoretic techniques and results, adjusted to account for the failure of universal applicability. Commas in the succedents of sequents are interpreted using the split disjunction in $\mathsf{GT}$; we saw that this leads to interesting correspondences between the structural rules of our calculus and team-semantic closure properties. We showed that $\mathsf{GT}^-$ admits height-preserving weakening, contraction, and inversion; with the caveats that right contraction is only admissible for classical formulas (which is as expected, given the connection we observed between right contraction and union closure), and that $\mathsf{R}\vvee$ is only ``invertible'' in the sense of the split property. We proved the cutfree completeness of $\mathsf{GT}$ by providing a procedure for constructing cutfree derivations and countermodels that is similar to the analogous procedure for $\mathsf{G3cp}$. We generalized the notion of resolutions from inquisitive logic to define the notion of partial resolutions, and used partial resolutions to define a weak subformula property for $\mathsf{GT}^-$ as well as to provide a second proof of the cutfree completeness of $\mathsf{GT}$. We proved a normal form theorem for cutfree derivations, using which we provided a cut elimination procedure for $\mathsf{GT}$. We proved a sequent interpolation theorem via an adaptation of Maehara's method, and derived two novel results as corollaries: Lyndon's interpolation for $\PLVVEE$, and Craig's interpolation for $\PLVVEE$ that does not rely on uniform interpolants. Finally, we defined a variant of $\mathsf{GT}$ with independent-context rules instead of shared-context rules with restricted contexts for the connectives $\vee$ and $\land$; this variant makes it explicit that the structural effect of the contextual restrictions is to disallow right contraction for non-classical formulas.

We conclude by discussing the prospects of applying our deep-inference approach to provide sequent calculi for other team logics, as well as possible modifications of the approach.

Recall first that in Section \ref{di:section:introduction} we noted that $\PLVVEE$ is an extension of propositional dependence logic (the extension of $\PL$ with dependence atoms) in the sense that dependence atoms are definable in $\PLVVEE$. The system $\mathsf{GT}$ therefore also serves as a system for propositional dependence logic, albeit with dependence atoms handled implicitly via definitions rather than explicitly with their own dedicated rules. It is not clear to us how smoothly our approach would extend to a (cutfree complete) system for propositional dependence logic in which dependence atoms are given their own rules.

It is natural ask whether the calculus $\mathsf{GT}$ could be extended in a modular way with rules for the first-order quantifiers or for the modalities to provide calculi for the first-order or modal extensions of $\PLVVEE$. There are multiple different versions of the first-order quantifiers and the modalities in team semantics in the literature (see, e.g., \cite{kontinen2009,galliani2012,kontinen2023} for different quantifiers, and \cite{vaananen2008,hella20192,ciardellibook,anttila2025} for different modalities), but among the most prominent are the quantifiers adopted in first-order dependence logic \cite{vaananen2007} and the modalities used in modal dependence logic \cite{vaananen2008}. These quantifiers and modalities distribute over the global disjunction $\vvee$, so we expect the deep-inference rules to remain sound and hence in principle we see no obstacle to modular extensions of $\mathsf{GT}$ with rules for these quantifiers/modalities. However, it is not immediately obvious whether some extension with standard rules for the quantifiers/modalities would be cutfree complete.

A similar natural question is whether $\mathsf{GT}$ can be modularly extended with implication rules to provide cutfree complete calculi for extensions of $\PLVVEE$ with implication connectives. The most natural of such extensions is the extension of $\PLVVEE$ with the \emph{intuitionistic implication}  also used in inquisitive logic. $\PLVVEE$ with this implication is essentially the logic InqB$_\vee$, the extension of propositional inquisitive logic InqB with the split disjunction $\vee$ (see, for instance, \cite{ciardelli2016dependency,ciardelliIemhoffYang}), and so extending $\mathsf{GT}$ in a modular way with rules for this implication would also yield a calculus for InqB. Given that resolutions are adapted from inquisitive logic, and that analogues of the semantic results on which the functioning of our system depends (see Section \ref{di:section:resolutions}) hold for InqB and InqB$_\vee$, we expect our approach to carry over smoothly. However, unlike the binary connectives of $\PLVVEE$, the intuitionistic implication does not distribute over $\vvee$ in a straightforward way; this has the consequence that resolutions in InqB and InqB$_\vee$ have a more complex structure than in $\PLVVEE$. The more complex resolutions, in turn, appear to necessitate a more involved and less intuitive set of deep-inference rules. Moving from standard Gentzen-style sequents to a slightly more general type of structure may allow for a simpler set of rules; we aim to investigate this in future work.

The logic $\PLVVEE$ extends classical propositional logic $\PL$ with the inquisitive disjunction $\vvee$. One can similarly extend intuitionistic propositional logic with $\vvee$---the intuitionistic versions of InqB and InqB$_\vee$, for instance, are studied in \cite{ciardelliIemhoffYang,Muller}. It is thus natural to ask whether replacing the classical rules in the $\mathsf{G3cp}$-base of $\mathsf{GT}$ with their intuitionistic counterparts would yield a cutfree complete calculus for the fragment of propositional intuitionistic logic with the connectives $\bot,\lnot,\land,\vee$, extended with $\vvee$. (Note that the set ${\bot, \lnot, \land, \lor}$ is not functionally complete for propositional intuitionistic logic \cite{mckinsey}, so the logic is indeed a proper fragment.) Preliminary investigation suggests that this is the case, but we leave the detailed investigation of intuitionistic team logics for future work.

As for other team logics, given our strong focus on the properties of a single team-semantic connective---the inquisitive disjunction $\vvee$---as well as the connections between the cutfree completeness of our system and the resolution theorem, which depends, in turn, on the downward closure of $\PLVVEE$ as well as the union closure of the $\vvee$-free fragment, we expect any application of this approach to logics which do not involve $\vvee$ and which do not also share these additional features to involve some further innovations and/or stronger proof-theoretic machinery. There are, however, some other interesting team logics which do incorporate $\vvee$ and do share these features (for instance, the extension of \emph{Team Linear Temporal Logic} with $\vvee$ in \cite{kontinen2023setsemanticsasynchronousteamltl}).

We chose to employ standard Gentzen-style sequents and only one set of deep-inference rules in order to keep our system as close to a Gentzen-style system as possible. However, it might be interesting to develop a system for $\PLVVEE$ making more extensive use of deep-inference rules and techniques, and perhaps operating on a more general type of sequent/structure, in order to make comparisons with other systems in the deep-inference literature, as well as to make use of results in said literature. In particular, it appears that one can formulate an appealing system similar to $\mathsf{GT}$ in the \emph{calculus of structures} \cite{guglielmi,guglielmistrassburger2001,brunnler2006}. Another approach to which team logics may prove amenable is that of \emph{nested sequents} \cite{bull,kashima,brunnler2009,poggiolesi2009,poggiolesi}. More extensive use of deep-inference rules and a more general notion of sequent/structure might also allow for a smoother and more uniform and modular treatment of other team logics, at the expense of losing the close correspondence with Gentzen-style systems.

To summarize, our approach was tailored for $\PLVVEE$, and the resulting system, with its simplicity and clarity, appears to be a good fit for this logic. Whether this approach can be generalised to other team logics remains to be explored. The intriguing links in our setting between structural properties and team-semantic closure properties do suggest that further investigation of team logics which differ in their closure properties from $\PLVVEE$ may prove fruitful.

\bibliographystyle{elsarticle-harv} 
\bibliography{bibb.bib}

\end{document}